 \newtcolorbox{assbox}{colback=black!5!white,colframe=black!75!black}
  \newtcolorbox{thmbox}{colback=red!5!white,colframe=red!75!black}
\newcommand{\rev}[1]{{#1}}
\newcommand{\RR}{\mathbb{R}}
\newcommand{\NN}{\mathbb{N}}
\newcommand{\Cc}{\mathcal{C}}
\newcommand{\Mm}{\mathcal{M}}
\newcommand{\Nn}{\mathcal{N}}
\newcommand{\Pp}{\mathcal{P}}
\newcommand{\Xx}{\mathcal{X}}
\newcommand{\E}{\mathbf{E}}
\renewcommand{\d}{\mathrm{d}}
\newcommand{\TV}{\mathrm{TV}}
\DeclareMathOperator{\KL}{KL}
\DeclareMathOperator{\LSE}{LSE}
\newtheorem{theorem}{Theorem}[section]
\newtheorem{proposition}[theorem]{Proposition}
\newtheorem{lemma}[theorem]{Lemma}
\newtheorem{corollary}[theorem]{Corollary}
\newtheorem{definition}[theorem]{Definition}
\title{
Doubly Regularized Entropic Wasserstein Barycenter\footnote{To appear in~\emph{Foundations of Computational Mathematics}}
}
\author{
L\'ena\"ic Chizat\thanks{Institut de Mathématiques, École polytechnique fédérale de Lausanne (EPFL), \texttt{lenaic.chizat@epfl.ch}} 
}
\begin{document}
\maketitle

\begin{abstract}
We study a general formulation of regularized Wasserstein barycenters that enjoy favorable regularity, approximation, stability and (grid-free) optimization properties.
This barycenter is defined as the unique probability measure that minimizes the sum of entropic optimal transport (EOT) costs with respect to a family of given probability measures, plus an entropy term. We denote it the $(\lambda,\tau)$-barycenter, where $\lambda$ is the inner regularization strength and $\tau$ the outer one. This formulation recovers several previously proposed EOT barycenters for various choices of $\lambda,\tau \geq 0$ and generalizes them.
\rev{First, we show that, as $\lambda, \tau \to 0$, regularizing doubly can \emph{decrease} the approximation error compared to a single regularization. More specifically, we show that for smooth densities and the quadratic cost, the leading order term of the suboptimality in the (unregularized) Wasserstein barycenter objective cancels when $\tau\sim \nicefrac{\lambda}{2}$.} We discuss also this phenomenon for isotropic Gaussian distributions where all $(\lambda,\tau)$-barycenters have closed-form.
Second, we show that for $\lambda,\tau>0$, this barycenter has a smooth density and is strongly stable under perturbation of the marginals. In particular, it can be estimated efficiently: given $n$ samples from each of the probability measures, it converges in relative entropy to the population barycenter at a rate $n^{-1/2}$.
Finally, this formulation is amenable to a grid-free optimization algorithm: we propose a simple Noisy Particle Gradient Descent method which, in the mean-field limit, converges globally at an exponential rate to the $(\lambda,\tau)$-barycenter. 
\end{abstract}

\section{Introduction}
The Wasserstein barycenter is a probability measure that summarizes a family of probability measures in a geometrically meaningful way. This object, first studied in~\cite{agueh2011barycenters}, has found numerous applications in statistics~\cite{bassetti2006minimum, boissard2015distribution, bernton2019parameter}, image processing~\cite{rabin2011wasserstein}, computer graphics~\cite{solomon2015convolutional} and Bayesian inference~\cite{srivastava2018scalable, backhoff2022bayesian} (see~\cite[Chap.~9.2]{peyre2019computational} or~\cite{panaretos2020invitation} for introductions to this topic). While it is arguably one of the most useful notion of barycenters for probability measures, the Wasserstein barycenter is unfortunately difficult to estimate and compute in large-scale applications.

To overcome these limitations and following the idea of entropic regularization of optimal transport, a.k.a~the Schr\"odinger bridge problem~\cite{schrodinger1932theorie,wilson1969use,erlander1990gravity,kosowsky1994invisible,leonard2012schrodinger,cuturi2013sinkhorn}, various formulations of entropy-regularized Wasserstein barycenters have been proposed and studied in the literature (discussed below). In these works, entropic regularization is incorporated in two different ways, which we refer to as \emph{inner} and \emph{outer} regularizations, and with various reference measures. It is not clear a priori how these formulations relate to each other, and whether a particular one stands out for its mathematical or practical properties.

In this paper, we aim at clarifying and generalizing the picture by considering \emph{both} inner and outer regularizations at the same time. We prove that this formulation combines favorable analytical, approximation, stability and optimization properties, which are all enabled by the joint contributions of these two regularizations.

\subsection{Entropic optimal transport (EOT)}
Let $\Xx$ be a compact and convex subset of $\RR^d$ with nonempty interior and let $c\in \Cc^p(\Xx\times\Xx)$ for some $p\geq 2$.
For two probability measures $\mu, \nu \in \Pp(\Xx)$, let $\Pi(\mu,\nu)$ be the set of transport plans\footnote{That is, probability measures on $\Xx\times \Xx$ with marginals $\mu$ and $\nu$ on each factor of $\Xx\times \Xx$.} between $\mu$ and $\nu$ and define the Entropic Optimal Transport (EOT) cost   as
\begin{align}\label{eq:EOT}
T_\lambda(\mu,\nu) \coloneqq \min_{\gamma \in \Pi(\mu,\nu)} \int_{\Xx\times \Xx} c(x,y) \d\gamma(x,y) + \lambda \KL(\gamma | \mu\otimes \nu) 
\end{align}
where $\lambda\geq 0$ is the regularization strength and $\KL(\mu|\nu)=\int \log\big(\frac{\d\mu}{\d\nu}\big)\d\mu$ if $\mu\ll\nu$ and $+\infty$ otherwise, is the relative entropy (or Kullback-Leibler divergence). Notice the choice of reference measure $\mu\otimes \nu$ for the regularization term in~\eqref{eq:EOT}, which is important for our exposition, see Section~\ref{sec:reference-measure}. With this choice, $T_\lambda$ is always finite, even for discrete measures (indeed, $\gamma=\mu\otimes \nu$ is a transport plan). Setting the regularization $\lambda$ to $0$, we recover the standard optimal transport problem, and the $L^2$-Wasserstein distance $W_2$ is defined as $W_2\coloneqq \sqrt{2T_0}$ when $c(x,y)=\frac12 \Vert y-x\Vert^2_2$.

\subsection{Doubly Regularized EOT Barycenter}\label{sec:intro-douba}

Given a family of $K$ probability measures $(\nu_1,\dots,\nu_K)\in \Pp(\Xx)^K$ and $K$ weights $(w_1,\dots,w_K)\in \RR_+^K$ summing to $1$, we define the EOT barycenter functional $G_\lambda:\Pp(\Xx)\to \RR$ as
\begin{align}\label{eq:G}
G_\lambda(\mu) \coloneqq \sum_{k=1}^K w_kT_\lambda(\mu,\nu_k).
\end{align}
We also define the $H$-functional of Boltzmann (the opposite of the differential entropy) by 
\begin{align}\label{eq:entropy}
H(\mu) \coloneqq  \int_\Xx \log\Big(\frac{\d \mu}{\d x}\Big)\d \mu(x)
\end{align}
if $\mu$ is absolutely continuous and $+\infty$ otherwise and  $\d x$ stands for the Lebesgue measure. Our main object of study is the following doubly regularized EOT barycenter.

\begin{assbox}
\begin{definition}\label{def:debiaised-barycenter} For $\tau,\lambda\geq 0$, the \emph{$(\lambda,\tau)$-barycenter} is the minimizer $\mu^*_{\lambda,\tau} \in \Pp(\Xx)$ of
\begin{align}\label{eq:optim-objective}
F_{\lambda,\tau}(\mu) \coloneqq G_\lambda(\mu) + \tau H(\mu).
\end{align}
We refer to $\lambda$ (resp.~$\tau$) as the \emph{inner} (resp.~\emph{outer}) regularization strength.
\end{definition}
\end{assbox}

For absolutely continuous measures, $(\lambda,\tau)$-barycenters can also be interpreted as EOT barycenters with only inner regularization, but with a different reference measure $\sigma_{\mathrm{ref}}$ in place of $\mu\otimes \nu$ in the definition of $T_\lambda$ of Eq.~\eqref{eq:EOT}. Specifically, as shown in Section~\ref{sec:reference-measure}, the $(\lambda,\tau)$-barycenter coincides with the $(\lambda,0)$-barycenter with reference measure
\begin{align}
\sigma_{\mathrm{ref}} = \Big[\Big(\frac{\d\mu}{\d x}\Big)^{\alpha}\d x\Big]\otimes \Big[\Big(\frac{\d\nu}{\d y}\Big)^{\alpha}\d y\Big]&&\text{where}&&\alpha \coloneqq 1-\tau/\lambda<1.
\end{align} 
This in particular includes the case $\alpha=0$, that is $\sigma_{\mathrm{ref}}=\d x \otimes \d y$ (when $\tau=\lambda$) which is the historical formulation of EOT with Lebesgue as a reference measure known as the Schr\"odinger bridge problem. Under this equivalent formulation, $(\lambda,\lambda)$-barycenters have been considered and studied in many works~\cite{cuturi2014fast,cuturi2018semidual,bigot2019penalization} (see, e.g.~the reference book~\cite[Chap.~9.2]{peyre2019computational}). As we will see, interpreting those barycenters as doubly-regularized lead to a streamlined and stronger analysis of their properties. We will also see that $(\lambda,\nicefrac\lambda2)$-barycenters (corresponding to $\alpha=\nicefrac12$) approximate better the $(0,0)$-barycenter as $\lambda\to 0$. Let us mention that the general formulation of $(\lambda,\tau)$-barycenters in Def.~\ref{def:debiaised-barycenter} already appears in~\cite{ballu2020stochastic} where it is motivated by computational purposes and the analysis in that paper requires the constraint $\tau>\lambda$. \rev{In contrast, we are mostly interested in the regime $\tau \leq \lambda$ since we show in Section~\ref{sec:approximation} that this regime sometimes lead to improved approximation properties.}

\subsection{Relation to barycenters in the literature}
Various notions of barycenters for probability measures based on (entropy-regularized) optimal transport can be found in the literature. Most of them can be seen as $(\lambda,\tau)$-barycenters for particular choices of $\lambda$ and $\tau$, see Table~\ref{tab:comparison} for a summary.
\begin{enumerate}[(i)]
\item \textbf{Unregularized OT barycenters}. They are the minimizers $\mu_{0,0}^*$ of $G_0$ in $\Pp(\Xx)$, they were first studied in~\cite{agueh2011barycenters} and correspond to $(0,0)$-barycenters in Def.~\ref{def:debiaised-barycenter}. Under our assumptions, minimizers always exist but might not be unique. Uniqueness holds for instance in the case $c(x,y)\propto \Vert y-x\Vert_2^2$ and if at least one of the $\nu_k$ vanishes on small-sets (a condition weaker than absolute continuity)~\cite{agueh2011barycenters}. 
\item \textbf{Inner-regularized barycenters}. They are defined as the minimizers of $G_\lambda$ and correspond to $(\lambda,0)$-barycenters. Uniqueness of this barycenter is not always granted as $G_\lambda$ is not strictly convex (think of $c$ the constant cost for which any $\mu\in\Pp(\Xx)$ is a barycenter). Compared to $\mu_{0,0}^*$, the regularization typically induces a shrinking bias. In particular, the barycenter of Gaussian distributions can be a Dirac mass when their covariances is small compared to $\lambda$~\cite{janati2020debiased} (see  Section~\ref{sec:gaussians}). To understand this, one may verify that for the square-distance cost, the minimizer of $T_\lambda(\cdot,\mu)$ is $\mu$ itself only when $\lambda=0$ or when $\mu$ is a Dirac mass, and otherwise, it is a deconvolution of $\mu$~\cite{rigollet2018entropic}. 
\item \textbf{Sinkhorn divergence barycenters.} The Sinkhorn divergence $
 S_{\lambda}(\mu,\nu) \coloneqq T_\lambda(\mu,\nu) - \frac12 T_\lambda(\mu,\mu)-\frac12 T_\lambda(\nu,\nu)
 $
has been introduced with the motivation to fix this bias and recover a distance-like quantity~\cite{ramdas2017wasserstein, genevay2018learning}. It is indeed a positive definite quantity as long as $e^{-c/\lambda}$ is a positive definite universal kernel~\cite{feydy2019interpolating}, which is the case e.g. for $c(x,y)\propto\Vert x-y\Vert^2_2$. This suggests to consider \emph{Sinkhorn divergence} barycenters~\cite{janati2020debiased}, i.e.~the minimizers of 
 $$
\mu \mapsto \sum_{k=1}^K w_k S_\lambda(\mu,\nu_k) = G_\lambda(\mu) - \tfrac12 T_{\lambda}(\mu,\mu) + C
 $$
 where $C\in \RR$ does not depend on $\mu$. We will use the notation $\mu_{\lambda,\mathrm{div}}^*$ for this barycenter.
 The self-EOT term indeed reduces the bias of the barycenter: this is shown for Gaussian distributions in~\cite{janati2020debiased}, and for general smooth measures in Section~\ref{sec:approximation}. However, little else is known about this barycenter regarding uniqueness, stability or regularity. This formulation is not covered by Def.~\ref{def:debiaised-barycenter}.
\item \textbf{Schr\"odinger barycenters\footnote{These were referred to as \emph{Sinkhorn barycenters} in~\cite{bigot2019penalization}; here we propose a name that conveys the choice of reference measure in the formulation.
}.} In most works (e.g.~\cite{cuturi2014fast,cuturi2018semidual,bigot2019penalization}), inner-regularized barycenters are in fact considered with Lebesgue product measure $\d x\otimes \d x$ as a reference measure in~\eqref{eq:EOT}, in place of $\mu\otimes \nu$. As discussed in the previous paragraph, they correspond to $(\lambda,\lambda)$-barycenters. This regularization leads to a blurring bias.
\item  \textbf{Outer-regularized barycenters.} These are the minimizers of  $G_0+\tau H$, studied in~\cite{bigot2019data, carlier2021entropic}, which correspond to $(0,\tau)$-barycenters in Def.~\ref{def:debiaised-barycenter}. This barycenter has interesting regularity properties: for instance~\cite{carlier2021entropic} show bounds on the $L^\infty$ norm, moments and regularity of the barycenter, which are not known for the inner-regularized barycenters. This regularization induces a blurring bias as well.
\end{enumerate}
As illustrated by this list, all the previously proposed OT-like barycenters -- except the Sinkhorn divergence barycenter -- appear to be $(\lambda,\tau)$-barycenters, with various formulations corresponding to different subsets of the $(\lambda,\tau)$ plane. In our analysis, we will often restrict ourselves to $\lambda,\tau>0$ for convenience.

\begin{table}
\centering
\begin{tabular}{l|l|l}
Barycenter & Objective  & Notation  \\
\hline
Un-regularized & $G_0$ &  $\mu_{0,0}^*$   \\
Inner-regularized &$G_\lambda$ & $\mu_{\lambda,0}^*$   \\
Sinkhorn divergence & $G_\lambda -\frac12 T_\lambda(\cdot,\cdot)$  &  $\mu_{\lambda,\mathrm{div}}^*$  \\
Schr\"odinger & $G_\lambda + \lambda H$ & $\mu^*_{\lambda,\lambda}$\\
Outer-regularized &$G_0 +\tau H$ &  $\mu_{0,\tau}^*$   \\
\textbf{Doubly-regularized} &$G_\lambda +\tau H$  & $\mu_{\lambda,\tau}^*$    \\
\hline
\end{tabular}
\caption{List of the various formulations of OT-like barycenters. 
See Fig.~\ref{fig:comparison-1D} for an illustration.
}\label{tab:comparison}
\end{table}

\subsection{Contributions}
The contributions of our work are the following:
\begin{itemize}
\item in Section~\ref{sec:statics}, we discuss basic variational properties of $(\lambda,\tau)$-barycenters. In particular, we prove that for $\lambda,\tau>0$, they have a smooth log-density (Thm.~\ref{thm:characterization}) and we derive a dual formulation (Prop.~\ref{prop:dual}).
\item in Section~\ref{sec:approximation}, we study the approximation error of $(\lambda,\tau)$-barycenter with respect to the unregularized Wasserstein barycenter for the square-distance cost. We prove that for smooth marginals $\nu_1,\dots,\nu_K$, the suboptimality of $(\lambda,\nicefrac{\lambda}{2})$-barycenters in the Wasserstein barycenter functional $G_0$ is of the order $\lambda^2$ and that the same holds for Sinkhorn divergence barycenters (Thm.~\ref{thm:approx}). We also compute and discuss the closed-form of $(\lambda,\tau)$-barycenters between isotropic Gaussian distributions (Prop.~\ref{prop:gaussians}). 

\item In Section~\ref{sec:statistics}, we give stability bounds for $(\lambda,\tau)$-barycenters (Thm.~\ref{thm:stability}). A consequence of these stability bounds is that they can be estimated in relative entropy given $n$ independent samples from each $\nu_k$ with an expected error in $O(\tau^{-1}(1+\lambda^{-d/2})n^{-1/2})$.

\item To compute this barycenter, we introduce in Section~\ref{sec:dynamics} a grid-free numerical method: Noisy Particle Gradient Descent (NPGD). We prove the well-posedness and exponential convergence to the global minimizer (Thm.~\ref{thm:global-convergence}) of this optimization dynamics in the mean-field limit, i.e. when the number of particles grows to infinity. 

\item Numerical results are presented in Section~\ref{sec:numerics}. There, we give examples of $(\lambda,\tau)$-barycenters on a simple 1D problem solved via convex optimization on the dual problem, and we illustrate the global convergence of the grid-free method NPGD on an example where $G_0$ has a spurious minimizer. 

\end{itemize}

\paragraph{\rev{General assumptions and notations}}
Throughout $\Xx \subset \RR^d$ is a compact convex set with nonempty interior and $c\in \Cc^p(\Xx\times \Xx)$ for some $p\in \NN,\; p\geq 2$. \rev{The space $\Cc^{p}(\Xx)$ is the space of functions defined on $\Xx$ that admit a $\Cc^p$ extension on $\RR^d$, endowed with the usual supremum norm. Using the multi-index notation, this norm is defined as
\(
\Vert f\Vert_{\Cc^k} \coloneqq \inf_{\tilde f}\sup_{\vert \alpha\vert \leq k} \Vert \tilde f^{(\alpha)}\Vert_\infty
\)
where the infimum is over functions $\tilde f$ that are extensions of $f$ defined on $\RR^d$. Endowed with this norm, $\Cc^p(\Xx)$ is a Banach space, see~\cite[Chap.~8, II]{queffelec2020analyse} for details. We say that a sequence $(\mu_n) \in\Pp(\Xx)^\NN$ converges \emph{weakly} to $\mu^*\in \Pp(\Xx)$ iff for any continuous function $f\in \Cc^0(\Xx)$, it holds $\lim_{n\to \infty} \int f\d\mu_n =\int f\d\mu^*$. A functional $E:\Pp(\Xx)\to \RR$ is said \emph{weakly continuous} iff for any weakly converging sequence $(\mu_n)$ it holds $\lim_n E(\mu_n)=E(\lim_n \mu_n)$.
}

\section{Well-posedness and regularity}\label{sec:statics}
This section contains basic mathematical results about $(\lambda,\tau)$-barycenters and the optimization problem defining them (Def.~\ref{def:debiaised-barycenter}). 

\subsection{Preliminaries: regularity of EOT} Let us begin with some useful facts about EOT. The problem~\eqref{eq:EOT} that defines $T_\lambda$ has a unique solution $\gamma^*\in \Pp(\Xx\times \Xx)$ and admits the dual formulation
\begin{align}\label{eq:dual-EOT}
T_\lambda(\mu,\nu) = \max_{\phi\in L^1(\mu),\psi\in L^1(\nu)} \int \phi \d\mu + \int \psi \d\nu +\lambda \Big(1 - \int e^{(\phi(x)+\psi(y)-c(x,y))/\lambda}\d\mu(x)\d\nu(y)\Big).
\end{align}
This dual problem has a solution $(\phi^*,\psi^*)$ which is unique in $L^1(\mu)\times L^1(\nu)$ up to the transformation $(\phi^*+c,\psi^*-c)$ for $c\in \RR$. At optimality, we have $T_\lambda(\mu,\nu) = \int \phi^*\d\mu + \int \psi^*\d\nu$ and the primal-dual relation
$
\gamma(\d x,\d y) = e^{(\phi(x)+\psi(y)-c(x,y))/\lambda}\mu(\d x)\nu(\d y).
$
Moreover, the potentials satisfy for $\mu\otimes \nu$ almost every $(x,y)$, the optimality condition
\begin{align}\label{eq:schrodinger-system}
\left\{
\begin{aligned}
\phi^*(x) &= -\lambda \log \Big( \int e^{(\psi^*(y)-c(x,y))/\lambda}\d\nu(y)\Big)\\
\psi^*(y) &= -\lambda \log \Big( \int e^{(\phi^*(x)-c(x,y))/\lambda}\d\mu(x)\Big)
\end{aligned}
\right. 
.
\end{align}
These equations can be used to extend $\phi^*$ and $\psi^*$ as continuous functions (in fact functions of class $\Cc^p$ when $c\in \Cc^p$) over $\Xx$, which satisfy these equations everywhere~\cite{genevay2019sample}. 

\begin{definition}[Schr\"odinger potentials]\label{def:potentials}
The pairs of functions $(\phi,\psi)\in \Cc^p(\Xx)\times \Cc^p(\Xx)$ which satisfy the \emph{Schr\"odinger system}\footnote{It would perhaps be less ambiguous to call these ``EOT'' system/potentials, as their regularity properties rely on using $\mu\otimes \nu$ as a reference measure in EOT instead of $\d x \otimes \d y$ in the original Schr\"odinger system.} Eq.~\eqref{eq:schrodinger-system} for all $(x,y)\in \Xx^2$ are called \emph{Schr\"odinger} potentials. This pair is unique up to the transformation $(\phi+C,\psi-C)$ for $C\in \RR$. \rev{In what follows, we fix an arbitrary $x_0\in \Xx$ and we call \emph{the} Schr\"odinger potentials, the unique pair $(\phi,\psi)$ that moreover satisfies $\phi(x_0)=0$.}
\end{definition}

This particular choice of potentials among all those that satisfy Eq.~\eqref{eq:schrodinger-system} $\mu$ (resp.~$\nu$) almost everywhere is justified by the following result.
\begin{proposition}[First-variation of entropic optimal transport]\label{prop:propertiesEOT} Fix $\nu \in \Pp(\Xx)$ and for $\mu\in \Pp(\Xx)$ let $(\phi[\mu],\psi[\mu])$ be the Schr\"odinger potentials associated to the pair $(\mu,\nu)$. Then:
\begin{itemize}
\item[(i)] The map $\mu\mapsto \phi[\mu]$ (as well as the map $\mu\mapsto \psi[\mu]$) satisfies the following Lipschitz continuity property: there exists $L>0$ such that
\[
\Vert \phi[\mu] - \phi[\mu']\Vert_{\Cc^{p-1}(\Xx)} \leq L\, W_2(\mu,\mu'),\quad \forall \mu,\mu'\in \Pp(\Xx).
\]
\item[(ii)]  the function 
$\mu\mapsto T_\lambda(\mu,\nu)$ is convex, weakly continuous, and admits $\phi[\mu]$ as first-variation, i.e.
\begin{align}
\forall \mu,\tilde \mu \in \Pp(\Xx),\; \lim_{\epsilon\, \downarrow\, 0}\frac{1}{\epsilon} \Big( T_\lambda((1-\epsilon)\mu+\epsilon \tilde \mu,\nu) - T_\lambda(\mu,\nu)\Big) = \int_\Xx \phi[\mu](x)\d (\tilde \mu-\mu)(x) .
\end{align}
\item[(iii)] For any $p'\in \{0,\dots,p\}$ there exists $C_{p'}>0$ independent of $\mu,\nu$ and $\lambda$ such that
\[
\Vert \phi[\mu]\Vert_{\Cc^{p'}} \leq C_{p'}\lambda^{\min\{0, 1-p'\}}.
\]

\end{itemize}
\end{proposition}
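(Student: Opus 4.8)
The plan is to establish the three items in a logical order that lets later parts reuse earlier ones: first the a priori regularity estimates (iii), then the Lipschitz stability (i), and finally the first-variation formula (ii), which follows from (i) plus an explicit computation.

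For item (iii), the key observation is that the Schrödinger system \eqref{eq:schrodinger-system} expresses $\phi[\mu]$ as a log-integral (a soft-min) of $\psi[\mu](\cdot) - c(x,\cdot)$ against $\nu$, and symmetrically for $\psi[\mu]$. I would first get the $\mathcal C^0$ bound: subtracting two evaluations of the first equation and using that $\log\int e^{f/\lambda}\d\nu$ is an average-type operator, one gets $\mathrm{osc}(\phi[\mu]) \le \mathrm{osc}(c(\cdot,\cdot))$ on $\Xx^2$, which is an $O(1)$ bound independent of $\lambda$; hence $\Vert\phi[\mu]\Vert_{\tilde{\mathcal C}^0} = O(1) = O(\lambda^{\min\{0,1\}})$. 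For higher derivatives $p'\ge 1$, I would differentiate \eqref{eq:schrodinger-system} under the integral sign. The first derivative $\nabla_x \phi[\mu](x)$ equals a $\nu$-expectation of $\nabla_x c(x,\cdot)$ under the probability measure $\propto e^{(\psi[\mu](y)-c(x,y))/\lambda}\d\nu(y)$, which is bounded by $\Vert\nabla_x c\Vert_\infty = O(1)$, so again no $\lambda$ blowup at order $1$ (consistent with the exponent $\min\{0,1-1\}=0$). For orders $p' \ge 2$, each further differentiation can bring down a factor $1/\lambda$ from the exponential (through covariance-type terms of the tilted measure), giving the stated $\lambda^{1-p'}$ dependence; this is a routine but slightly tedious induction using Faà di Bruno / bounds on the derivatives of $t\mapsto \lambda\log t$ composed with smooth integrals, together with the $\mathcal C^p$ regularity of $c$. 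The $\mu$-independence of $C_{p'}$ is automatic since every bound only used $\Vert c\Vert_{\mathcal C^p}$ and that $\mu,\nu$ are probability measures.

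For item (i), I would linearize the fixed-point map defined by \eqref{eq:schrodinger-system}. Write the Schrödinger system abstractly as $(\phi,\psi) = \Lambda_{\mu,\nu}(\phi,\psi)$ and differentiate in $\mu$. Perturbing $\mu$ to $\mu'$ with $W_2(\mu,\mu')$ small, only the second equation (the one integrating against $\mu$) changes explicitly; the difference $\psi[\mu]-\psi[\mu']$ is controlled by $\big|\int e^{(\phi(x)-c(x,y))/\lambda}\d(\mu-\mu')(x)\big|$ divided by the normalizing integral, and since $x\mapsto e^{(\phi(x)-c(x,y))/\lambda}$ is Lipschitz (uniformly in $y$, using the $\mathcal C^1$ bounds from (iii)), this is $O(W_1(\mu,\mu')) = O(W_2(\mu,\mu'))$. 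Plugging back into the first equation propagates the bound to $\phi[\mu]-\phi[\mu']$. To upgrade from $\mathcal C^0$ to $\tilde{\mathcal C}^{p-1}$ one differentiates these identities up to order $p-1$ and bootstraps, at each order using the already-obtained lower-order stability plus the $\mathcal C^p$ regularity of $c$; here one loses exactly one derivative compared to (iii) because the Lipschitz-in-$\mu$ step costs a derivative on $c$. A Gronwall/contraction-type argument closes the estimate (the soft-min operator is a weak contraction in oscillation, so the linearized system is invertible on the quotient space).

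For item (ii), convexity and weak continuity of $\mu\mapsto T_\lambda(\mu,\nu)$ are standard: convexity follows from $T_\lambda$ being a minimum over $\gamma\in\Pi(\mu,\nu)$ of a functional that is jointly convex in $(\mu,\gamma)$ after rewriting $H(\gamma|\mu\otimes\nu)$ appropriately, and weak continuity follows from stability of EOT (or directly from (i)). For the first-variation formula, I would use the dual representation \eqref{eq:dual-EOT}: $T_\lambda(\mu,\nu) = \int\phi[\mu]\d\mu + \int\psi[\mu]\d\nu$ with the extra dual term vanishing at optimality. Differentiating along $\mu_\epsilon = (1-\epsilon)\mu + \epsilon\tilde\mu$, the envelope theorem (Danskin) says that the derivative of the dual value equals the partial derivative with the potentials frozen at their optimal value $(\phi[\mu],\psi[\mu])$, namely $\int\phi[\mu]\d(\tilde\mu-\mu)$; the contributions from the variation of the potentials cancel by first-order optimality in the dual. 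The rigorous justification of this envelope argument — controlling the remainder uniformly, which is exactly what the Lipschitz estimate (i) provides — is the main technical obstacle, and is the reason (i) is proved first.

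I expect the genuinely delicate step to be the higher-order induction in (iii): correctly tracking how many factors of $1/\lambda$ appear after $p'$ differentiations and showing the count is exactly $p'-1$ (not more), which requires organizing the derivatives of the tilted-measure expectations (moments/cumulants of order up to $p'$ of a measure with density $\propto e^{-c/\lambda}$) and checking the cancellations that keep the first-order derivative $\lambda$-free. Everything else is a bootstrap built on this estimate and on the contraction property of the soft-min operator.
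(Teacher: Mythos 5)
Your plan is essentially the same as the paper's, which for items (i) and (iii) simply cites the references (\cite{carlier2022lipschitz} via the implicit function theorem on the Schrödinger system, and \cite{genevay2019sample} via Faà di Bruno after repeated differentiation of \eqref{eq:schrodinger-system}); your sketches correctly reproduce the mechanism behind both citations, including the $O(1)$ oscillation bound, the tilted-measure expectation for the first derivative, and the one-derivative loss in (i). One small divergence: for convexity in (ii), you argue via joint convexity of $(\gamma,\mu)\mapsto \int c\,\d\gamma + \lambda H(\gamma|\mu\otimes\nu)$ and partial minimization over the affine constraint $\gamma\in\Pi(\mu,\nu)$, whereas the paper simply reads convexity off the dual \eqref{eq:dual-EOT} as a supremum of weakly continuous affine forms; both are correct, and the paper's is shorter. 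Your envelope/Danskin argument for the first-variation is the same idea as the paper's two-sided subgradient sandwich $\int\phi[\mu]\,\d(\tilde\mu-\mu)\le \tfrac1\epsilon(T_\lambda(\mu_\epsilon,\nu)-T_\lambda(\mu,\nu))\le\int\phi[\mu_\epsilon]\,\d(\tilde\mu-\mu)$ followed by passing to the limit using the weak continuity of $\mu\mapsto\phi[\mu]$ from (i); you are right that (i) is what closes this step. No gaps.
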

\begin{proof}
The first claim is technical and is proved in~\cite{carlier2024displacement} via the implicit function theorem on the Schr\"odinger system~\eqref{eq:schrodinger-system}. The convexity of $T_\lambda(\cdot,\nu)$ is clear by Eq.~\eqref{eq:dual-EOT} which expresses this function as a supremum of (weakly continuous) affine forms. To prove that $\phi[\mu]$ is the first-variation as in~\cite{feydy2019interpolating}, let $\mu_\epsilon \coloneqq (1-\epsilon)\mu+\epsilon\tilde \mu$. Using the fact that $\phi[\mu]$ (resp.~$\phi[\mu_\epsilon]$) is a subgradient of $T_\lambda(\cdot,\nu)$ at $\mu$ (resp. at $\mu_\epsilon$), we have
\begin{align*}
 \int \phi[\mu]\d(\tilde\mu-\mu)\leq \frac1\epsilon \Big(T_\lambda(\mu_\epsilon,\nu)-T_\lambda(\mu,\nu)\Big) \leq \int \phi[\mu_\epsilon]\d(\tilde \mu-\mu).
\end{align*}
The point \emph{(ii)} follows then from the weak continuity of $\mu\mapsto \phi[\mu]$, a consequence of \emph{(i)}. Finally \emph{(iii)} is proved in~\cite{genevay2019sample} where it is obtained by differentiating $p'$ times Eq.~\eqref{eq:schrodinger-system}  and applying Fa\`a di Bruno's formula.
\end{proof}
Let us mention that the Lipschitz constant $L$ in \emph{(i)} may depend exponentially on the oscillation of $c/\lambda$, namely $(\sup c -\inf c)/\lambda$. \rev{ See~\cite{divol2024tight} for an exponential improvement of this constant for the case of the cost $c(x,y)=\Vert y-x\Vert^2_2$.}

\subsection{Regularity of $(\lambda,\tau)$-barycenters}\label{sec:regularity}
We now gather useful regularity properties of $G_\lambda$, which are direct consequences of Prop.~\ref{prop:propertiesEOT}.
\begin{proposition}[Regularity of $G_\lambda$]\label{prop:Reg_G}
For any $(\nu_k)_{k=1}^K\in \Pp(\Xx)^K$, the function $G_\lambda \colon \mathcal P(\Xx) \to \RR_+$ defined in Eq.~\eqref{eq:G} is convex, weakly continuous, and for any $\mu \in \Pp(\Xx)$ it admits a first-variation
\begin{align}\label{eq:first-variation}
V[\mu] \coloneqq \sum_{k=1}^K w_k \phi_k[\mu] 
\end{align}
where $\phi_k[\mu] \in \Cc^{p}(\Xx)$ is the Schr\"odinger potential from $\mu$ to $\nu_k$.
The map $\mu\mapsto V[\mu]$ is Lipschitz continuous in the sense that there exists $L>0$ such that
\begin{align*}
\Vert V[\mu] - V[\mu']\Vert_{\Cc^{p-1}} \leq L\, W_2(\mu,\mu'), \qquad \forall \mu,\mu'\in \Pp(\Xx).
\end{align*}
We moreover have that for $p'\leq p$, there exists $C_{p'}>0$ independent of $\lambda>0$, $(w_k)_k$ and $(\nu_k)_k$ such that  $\Vert V[\mu] \Vert_{\tilde \Cc^{p'}}\leq C_{p'} \lambda^{\min\{0,1-p'\}}$.
\end{proposition}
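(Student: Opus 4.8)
The plan is to derive every assertion by applying Proposition~\ref{prop:propertiesEOT} to the individual terms $T_\lambda(\cdot,\nu_k)$, $k=1,\dots,K$, and summing; the only structural facts used are the finiteness of the family and $\sum_k w_k=1$. \emph{Convexity and weak continuity.} By Proposition~\ref{prop:propertiesEOT}(ii) each $\mu\mapsto T_\lambda(\mu,\nu_k)$ is convex and weakly continuous on $\Pp(\Xx)$, and a nonnegative finite combination of convex (resp.\ weakly continuous) functions is again convex (resp.\ weakly continuous); hence so is $G_\lambda$. (The codomain $\RR_+$ reflects the case $c\ge 0$, for which $T_\lambda\ge 0$ since $H(\gamma\,|\,\mu\otimes\nu)\ge 0$.)

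\emph{First-variation.} Fix $\mu,\tilde\mu\in\Pp(\Xx)$ and set $\mu_\epsilon\coloneqq(1-\epsilon)\mu+\epsilon\tilde\mu$. The difference quotient $\epsilon^{-1}(G_\lambda(\mu_\epsilon)-G_\lambda(\mu))$ is the $w$-weighted sum of the corresponding quotients for the $T_\lambda(\cdot,\nu_k)$; since the sum is finite we may pass to the limit $\epsilon\downarrow 0$ term by term, and Proposition~\ref{prop:propertiesEOT}(ii) gives the limit $\sum_k w_k\int \phi_k[\mu]\,\d(\tilde\mu-\mu)=\int V[\mu]\,\d(\tilde\mu-\mu)$ with $V[\mu]=\sum_k w_k\phi_k[\mu]$. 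Because $\tilde\mu-\mu$ has zero total mass, this quantity does not depend on the additive-constant normalization of the $\phi_k[\mu]$, and $V[\mu]\in\Cc^{p}(\Xx)$ as a finite linear combination of $\Cc^p$ functions. Hence $V[\mu]$ is the first-variation of $G_\lambda$ at $\mu$.

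\emph{Quantitative bounds.} For every $m\le p$, $\Vert\cdot\Vert_{\tilde\Cc^{m}}$ is a genuine norm on the space of $\Cc^m$ functions modulo additive constants, so it obeys the triangle inequality and $\Vert\sum_k w_k f_k\Vert_{\tilde\Cc^m}\le\sum_k w_k\Vert f_k\Vert_{\tilde\Cc^m}$ for any convex combination. Applying Proposition~\ref{prop:propertiesEOT}(i) to each marginal $\nu_k$, with constant $L_k$, and putting $L\coloneqq\max_k L_k$, we get $\Vert V[\mu]-V[\mu']\Vert_{\tilde\Cc^{p-1}}\le\sum_k w_k L_k\,W_2(\mu,\mu')\le L\,W_2(\mu,\mu')$; since $p\ge 2$ this controls in particular $\nabla V$, giving the stated Lipschitz continuity of $\mu\mapsto\nabla V[\mu]$. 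Similarly, Proposition~\ref{prop:propertiesEOT}(iii) --- whose constant $C_{p'}$ is independent of $\mu,\nu,\lambda$ --- yields $\Vert V[\mu]\Vert_{\tilde\Cc^{p'}}\le\sum_k w_k C_{p'}\lambda^{\min\{0,1-p'\}}=C_{p'}\lambda^{\min\{0,1-p'\}}$, using $\sum_k w_k=1$; this bound is uniform over weights and marginals.

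\emph{On the difficulty.} There is no genuine obstacle: the proposition is pure bookkeeping on top of Proposition~\ref{prop:propertiesEOT}. The only points deserving a moment's care are that the first-variation limit commutes with the finite sum and is insensitive to the constant ambiguity (because it is tested against a zero-mass signed measure), and that the quotient seminorms $\Vert\cdot\Vert_{\tilde\Cc^m}$ genuinely satisfy the triangle inequality so that the per-term estimates can be added. The precise (non-uniform, possibly $\lambda$-dependent) value of $L$ plays no role, consistent with the statement, which only asserts its existence.
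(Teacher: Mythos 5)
Your proposal is correct and matches the paper's intended argument: the paper states the proposition without a separate proof, explicitly introducing it as a ``direct consequence'' of Proposition~\ref{prop:propertiesEOT}, which is precisely the term-by-term summation you carry out, including the observations that the finite sum commutes with the first-variation limit and that the quotient seminorms are subadditive.
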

Conveniently, the objective is also strongly convex for the total variation norm $\Vert \cdot \Vert_{\TV}$ \rev{defined, for a signed measure $\sigma$ on $\Xx$ with absolute variation $\vert \sigma\vert$ as $\Vert \sigma\Vert_\TV = \vert \sigma\vert(\Xx)$}.

\begin{proposition}[Strong convexity of $F_{\tau,\lambda}$]\label{prop:strong-convexity}
For $\lambda\geq 0$, the objective $F_{\lambda,\tau}=G_\lambda+\tau H$ is $\tau$-strongly convex on $\Pp(\Xx)$ for the total variation norm, \rev{in the sense that for all $\theta \in [0,1]$ and $\mu_0,\mu_1\in \Pp(\Xx)$ it holds 
\begin{align*}
F_{\lambda,\tau}(\theta\mu_0+(1-\theta)\mu_1) \leq \theta F_{\lambda,\tau}(\mu_0) + (1-\theta)F_{\lambda,\tau}(\mu_1)-\frac{\tau}{2}\theta(1-\theta)\Vert \mu_1-\mu_0\Vert_{\TV}^2.
\end{align*}}
\end{proposition}
\begin{proof}
It is well known that $\mu\mapsto H(\mu)$ is $1$-strongly convex over $\Pp(\Xx)$ for the total variation norm (are proof using Pinsker's inequality is provided in Lem.~\ref{lem:H-strongly-convex} for completeness). 
Since $F_{\lambda,\tau}=G_\lambda+\tau H$ and $G_\lambda$ is convex, the result follows. 
\end{proof}
Strong convexity in $\ell^2$-norm on a discrete space $\Xx$ (which follows from Prop.~\ref{prop:strong-convexity} since then the total variation norm is the $\ell^1$ norm, and $\Vert \cdot \Vert_{\ell^2}\leq \Vert \cdot \Vert_{\ell^1}$) was already shown for the $(\lambda,\lambda)$-barycenter functional in~\cite[Thm.~3.4]{bigot2019data} with a technical proof tailored to that specific case. The equivalent formulation as a doubly-regularized problem makes this property immediate.

As a consequence of all these regularity results, we now show that the $(\lambda,\tau)$-barycenters can be expressed as a smooth Gibbs density, that solves a fixed point problem.
\begin{assbox}
\begin{theorem}[Existence and regularity]\label{thm:characterization}
For any $\lambda,\tau > 0$, $F_{\lambda,\tau}$ admits a unique minimizer $\mu^*_{\lambda,\tau} \in \Pp(\Xx)$. It is an absolutely continuous measure with density
\begin{align}\label{eq:characterization}
\frac{\d \mu^*_{\lambda,\tau}}{\d x} \propto  e^{-V[\mu^*_{\lambda,\tau}]/\tau} 
\end{align}
where $V[\mu^*_{\lambda,\tau}]=\sum_{k=1}^K w_k \phi_k[\mu^*_{\lambda,\tau}] \in \Cc^{p}(\Xx)$ satisfies the regularity estimates of Prop.~\ref{prop:Reg_G}. \rev{Moreover, if $\mu\in \Pp(\Xx)$ is absolutely continuous and satisfies~\eqref{eq:characterization}, then $\mu=\mu^*_{\lambda,\tau}$.}
\end{theorem}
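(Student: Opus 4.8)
The plan is to prove the three assertions in order: existence of a minimizer by the direct method, uniqueness by strong convexity, and the Gibbs form of the density by reducing the optimality condition to a linearized problem solved by the Gibbs variational principle; the regularity statement is then just an invocation of Proposition~\ref{prop:Reg_G}.

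\textbf{Existence and uniqueness.} Since $\Xx$ is compact, $\Pp(\Xx)$ is weakly compact. By Proposition~\ref{prop:Reg_G}, $G_\lambda$ is weakly continuous; and $H$ agrees, up to the additive constant $\log\lvert\Xx\rvert$, with the relative entropy $H(\cdot\,|\,\mathrm{Leb}_\Xx)$ against the normalized Lebesgue measure on $\Xx$, hence is weakly lower semicontinuous and bounded below (by $-\lvert\Xx\rvert/e$). Thus $F_{\lambda,\tau}=G_\lambda+\tau H$ is weakly lsc and proper (the uniform measure has finite value), so it attains its minimum on $\Pp(\Xx)$; fix a minimizer $\mu^*=\mu^*_{\lambda,\tau}$, and note $F_{\lambda,\tau}(\mu^*)<\infty$ forces $H(\mu^*)<\infty$. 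Uniqueness is immediate from Proposition~\ref{prop:strong-convexity}: for $\tau>0$ the objective is $\tau$-strongly convex, hence strictly convex.

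\textbf{Reduction to a linearized problem.} Set $V^*\coloneqq V[\mu^*]=\sum_{k} w_k\phi_k[\mu^*]\in\Cc^p(\Xx)$, which by Proposition~\ref{prop:Reg_G} is a first-variation (hence a weak subgradient) of $G_\lambda$ at $\mu^*$. I claim $\mu^*$ also minimizes the linear-plus-entropy functional $L(\mu)\coloneqq\int_\Xx V^*\dd\mu+\tau H(\mu)$ over $\Pp(\Xx)$. Indeed, take any $\mu'\in\Pp(\Xx)$; if $H(\mu')=\infty$ then $L(\mu')=\infty\geq L(\mu^*)$ (note $L(\mu^*)<\infty$ since $V^*$ is bounded and $H(\mu^*)<\infty$), so assume $H(\mu')<\infty$ and set $\mu_\eps\coloneqq(1-\eps)\mu^*+\eps\mu'$. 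By Proposition~\ref{prop:propertiesEOT}(ii) summed over $k$, $G_\lambda(\mu_\eps)=G_\lambda(\mu^*)+\eps\int V^*\dd(\mu'-\mu^*)+o(\eps)$, while convexity of $H$ gives $H(\mu_\eps)\leq(1-\eps)H(\mu^*)+\eps H(\mu')$. Hence $F_{\lambda,\tau}(\mu_\eps)\leq F_{\lambda,\tau}(\mu^*)+\eps\big(L(\mu')-L(\mu^*)\big)+o(\eps)$, and since $\mu^*$ minimizes $F_{\lambda,\tau}$ the bracket must be nonnegative, i.e. $L(\mu')\geq L(\mu^*)$.

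\textbf{Gibbs variational principle and regularity.} As $V^*\in\Cc^p(\Xx)$ is bounded on the compact set $\Xx$, $e^{-V^*/\tau}$ is bounded above and below by positive constants, so $Z\coloneqq\int_\Xx e^{-V^*(x)/\tau}\dd x\in(0,\infty)$ and $\bar\mu\coloneqq Z^{-1}e^{-V^*/\tau}\dd x$ is a probability measure with $\Cc^p$, strictly positive density. A direct rearrangement of logarithms (valid since $\int V^*\dd\mu$ is finite and $H(\mu)>-\infty$, both sides being $+\infty$ when $\mu$ is not absolutely continuous) gives $L(\mu)=\tau H(\mu\,|\,\bar\mu)-\tau\log Z$ for every $\mu\in\Pp(\Xx)$. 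Since $H(\cdot\,|\,\bar\mu)\geq0$ with equality only at $\bar\mu$, the unique minimizer of $L$ is $\bar\mu$; combined with the previous step, $\mu^*=\bar\mu$, which is exactly $\tfrac{\dd\mu^*_{\lambda,\tau}}{\dd x}\propto e^{-V[\mu^*_{\lambda,\tau}]/\tau}$. The regularity $V[\mu^*_{\lambda,\tau}]\in\Cc^p(\Xx)$ and the bounds $\lVert V[\mu^*_{\lambda,\tau}]\rVert_{\tilde\Cc^{p'}}\leq C_{p'}\lambda^{\min\{0,1-p'\}}$ are precisely Proposition~\ref{prop:Reg_G}, and the density $e^{-V[\mu^*_{\lambda,\tau}]/\tau}/Z$ inherits $\Cc^p$ regularity and positivity from these.

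\textbf{Main obstacle.} The delicate point is the reduction step: one must ensure $\mu_\eps$ is admissible (it is, being a convex combination of probability measures), that the first-variation expansion of $G_\lambda$ comes with a genuine one-sided-derivative statement so the limit $\eps\downarrow0$ can be taken (this is exactly Proposition~\ref{prop:propertiesEOT}(ii)), and that no pathology arises from $H$ taking the value $-\infty$ (impossible on a bounded domain) or from integrability of $V^*$ against $\mu'-\mu^*$ (immediate since $V^*$ is continuous and $\Xx$ compact). Everything else—the direct method, strong convexity, and the Gibbs identity—is routine given the EOT regularity recorded in Section~\ref{sec:statics}.
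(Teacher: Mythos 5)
Your existence and uniqueness steps are identical to the paper's (direct method plus strong convexity), but your derivation of the Gibbs form takes a genuinely different and arguably cleaner route. The paper writes down the first-order optimality condition $\sum_k w_k\phi_k[\mu^*](x)+\tau\log(\d\mu^*/\d x)(x)=C$ for $\mu^*$-a.e.~$x$ and then has to argue separately that $\mu^*$ has full support on $\Xx$ (citing the infinite negative slope of $s\log s$ at $0$) before it can promote this from a $\mu^*$-a.e.~identity to a formula for the density on all of $\Xx$. You instead show that $\mu^*$ minimizes the linearized functional $L(\mu)=\int V^*\dd\mu+\tau H(\mu)$ -- using the one-sided derivative guaranteed by Proposition~\ref{prop:propertiesEOT}(ii) together with convexity of $H$ -- and then identify $L$ as $\tau H(\cdot\,|\,\bar\mu)$ up to a constant, so that the Gibbs variational principle immediately forces $\mu^*=\bar\mu$. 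Your route avoids the Lagrange-multiplier bookkeeping and makes the full-support conclusion automatic (since $V^*$ is bounded, $e^{-V^*/\tau}$ is bounded away from $0$), at the cost of a slightly longer reduction step; the argument is correct, self-contained, and perfectly matches the hypotheses actually available from Propositions~\ref{prop:propertiesEOT} and~\ref{prop:Reg_G}.
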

\end{assbox}
\begin{proof}
The functional $G_\lambda$ is weakly continuous and $H$ is weakly lower-semicontinuous~\cite[Sec.~7.1.2]{santambrogio2015optimal} so $F_{\lambda,\tau}$ is weakly lower-semicontinuous. It is not identically $+\infty$ since it takes a finite value for the normalized Lebesgue measure on $\Xx$. Since $\Pp(\Xx)$ is weakly compact, we deduce from the direct method of the calculus of variations that there exists at least one minimizer $\mu^*_{\lambda,\tau}$. Moreover $F_{\lambda,\tau}$ is strictly convex (Prop.~\ref{prop:strong-convexity}), so the minimizer is unique and since $H(\mu^*_\lambda)<\infty$ this measure is absolutely continuous. \rev{Finally the implicit expression of the minimizer (Eq.~\eqref{eq:characterization}) and the fact that it is a sufficient optimality condition is proved in Prop.~\ref{prop:general-min-entropy-pb}.} 
\end{proof}
Interestingly, the minimizer of $F_{\lambda,\tau}$ for $\tau>0$ has always full support in $\Xx$ even when none of the marginals $\nu_1,\dots,\nu_K$ have. This property \rev{-- which may or may not be desirable depending on contexts --} is not satisfied when $\tau=0$ or for the Sinkhorn Divergence barycenter (see Section \ref{sec:gaussians} for the case of point mass marginals). 

\paragraph{Relation to prior works} To the best of our knowledge, little is known on the regularity of $(0,0)$-barycenters beyond absolute continuity under the condition that at least one $\nu_k$ is absolutely continuous~\cite{agueh2011barycenters}, \cite[Thm.~5.1]{kim2017wasserstein}. The only other regularity result we are aware of concerns $(0,\tau)$-barycenters and the quadratic cost:~\cite{carlier2021entropic} show a Fisher Information bound on $\mu^*_{0,\tau}$ (their Lem.~4.1) that imply Lipschitz regularity \rev{of the log-density of the barycenter} in the compact case : this is consistent with the $\lambda\to 0$ limit of Thm.~\ref{thm:characterization}. They additionally show (their Prop.~5.2) that $\mu^*_{0,\tau}$  gains two degrees of regularity compared to the densities $\nu_k$. With $\lambda,\tau>0$, we see from Thm.~\ref{thm:characterization} that the $(\lambda,\tau)$-barycenters are as regular as the cost function, irrespective of the regularity of the marginals.

\subsection{Dual formulations}\label{sec:dual-formulations}
There are several ways to derive a dual formulation for~\eqref{eq:optim-objective}. Let us detail one of them which stands out as an elegant composition of two soft-max (log-sum-exp) functions -- which is the dual consequence of the double regularization. This dual formulation can be used to compute $(\lambda,\tau)$-barycenters in practice, as done in Section~\ref{sec:numerics}. \rev{In this statement, we say that a functional on a normed space $E:X\to \mathbb{R}$ is $\alpha$-smooth if it is twice Fréchet differentiable and for all $x,y\in X$, $\vert D^2 E[x](y,y)\vert \leq \alpha \Vert y\Vert^2$.}
\newcommand{\bphi}{{\boldsymbol{\phi}}}
\newcommand{\bpsi}{{\boldsymbol{\psi}}}
\begin{proposition}[Dual formulation]\label{prop:dual}
For $\lambda,\tau>0$, one has
\begin{align}\label{eq:primal-dual}
\min_{\mu \in \Pp(\Xx)} F_{\lambda,\tau}(\mu)=\max_{\bpsi \in \Cc(\Xx)^K} E(\bpsi)
\end{align}
where $E(\bpsi) $ is defined as
\begin{align}\label{eq:dual}
\sum_{k=1}^K w_k \int_\Xx \psi_{k}\d\nu_k -\tau \log \left\{
\int_\Xx \exp 
\left[
\frac{\lambda}{\tau} \sum_{k=1}^Kw_k \log\int_\Xx \exp\Big( \frac{\psi_k(y)-c(x,y)}{\lambda}\Big)\d\nu_k(y) 
\right]\d x
\right\}.
\end{align}
The function $E$ is concave, $1$-Lipschitz continuous and $\min\{\lambda,\tau\}^{-1}$-smooth for the seminorm $\bpsi \mapsto \sum w_k \Vert \psi_k\Vert_\mathrm{osc}$ where $\Vert \psi\Vert_\mathrm{osc} = \sup_y \psi(y)-\inf_y \psi(y)$. It admits a maximizer with sup-norm smaller than $\Vert c\Vert_{\mathrm{osc}}$ (which is unique up to shifting each $\psi_k$ by constants). 
Moreover, the barycenter $\mu^*_{\lambda,\tau}$ is the Gibbs distribution associated to the solution of the dual problem (see~\eqref{eq:mu-from-psi}). 
\end{proposition}
\begin{proof}
Let us start from the objective of the dual formulation~\eqref{eq:dual-EOT} of $T_\lambda(\mu,\nu)$. Maximizing it over $\phi$ gives $\phi=\phi_{\psi,\nu}$ $\nu$-a.e.\ with 
\begin{align}\label{eq:soft-c-transform}
\phi_{\psi,\nu}(x) = -\lambda \log \Big( \int e^{(\psi(y)-c(x,y))/\lambda}\d\nu(y)\Big).
\end{align}
The so-called ``semi-dual'' formulation of EOT follows
\begin{align}\label{eq:semi-dual-EOT}
T_\lambda(\mu,\nu) = \max_{\psi\in \Cc(\Xx)} \int \psi\d\nu +\int \phi_{\psi,\nu}\d\mu.
\end{align}
We can thus rewrite the objective of the barycenter in Lagrangian form as
\begin{align*}
\min_{\mu\in \Pp(\Xx)} \max_{\bpsi \in \Cc(\Xx)^K} \Big\{L(\mu,\bpsi) \coloneqq \sum_{k=1}^K w_k \int  \psi_k\d\nu_k + \int  \Big( \sum_{k=1}^K w_k \phi_{\psi_k,\nu_k}\Big)\d\mu +\tau H(\mu)\Big\}.
\end{align*}
Observe that $\psi\mapsto \phi_{\psi,\nu}$ is a $1$-Lipschitz continuous function for the supremum norm and therefore, $\bpsi \mapsto L(\mu,\bpsi)$ is also $1$-Lipschitz continuous for the norm $\Vert \bpsi\Vert_\infty \coloneqq \max_k \Vert \psi_k\Vert_\infty$.
\rev{In order to exchange min and max, we will verify the assumptions of Sion's minimax theorem~\cite[Cor.~3.3]{sion1958general}:
\begin{theorem}[Sion's minimax theorem]\label{thm:Sion}
Let $X$ be a compact convex subset of a linear vector space and $Y$ a convex subset of a linear vector space. If $L:X\times Y\to \RR$ satisfies
\begin{enumerate}[(i)]
\item For all $x\in X$, $L(x,\cdot)$ is upper semicontinuous and concave on $Y$,
\item For all $y \in Y$, $L(\cdot,y)$ is lower semicontinuous and convex on $X$
\end{enumerate}
then $\min_{x\in X}\sup_{y\in Y} L(x,y) = \sup_{y\in Y} \min_{x\in X} L(x,y)$.
\end{theorem}
In our context, $X$ is $\Pp(\Xx)$ endowed with the weak topology (seen as a subset of the vector space of finite signed Borel measures) and $Y$ is $\Cc(\Xx)^K$ endowed with the sup-norm. The function $L$ is convex and lower semicontinuous in $\mu$ (for the weak topology) and it is concave and continuous in $\bpsi$ (for the sup-norm). Since in addition $\Pp(\Xx)$ is weakly compact, we can apply Sion's minimax theorem to exchange the order of min and max.

 Minimizing $L$ in $\mu \in \Pp(\Xx)$ gives, by a direct application of Prop.~\ref{prop:general-min-entropy-pb} in the special case where the convex functional is linear, a unique minimizer denoted by $\mu_\bpsi$ which is absolutely continuous with density}
\begin{align}\label{eq:mu-from-psi}
\frac{\d \mu_\bpsi}{\d x} = e^{(\chi_\bpsi-V_\bpsi)/\tau}&&\text{with}&&V_\bpsi\coloneqq \sum_{k=1}^K w_k \phi_{\psi_k,\nu_k}&&\text{and}&& \quad\chi_\bpsi\coloneqq-\tau \log \int e^{-V_\bpsi/\tau}\d x
\end{align}
and the objective becomes
$
\sup_{\bpsi \in \Cc(\Xx)^K} \sum_{k=1}^K w_k \int  \psi_k\d\nu_k + \chi_\bpsi
$
which is exactly Eq.~\eqref{eq:dual}.


\rev{Let us prove the remaining properties by applying Lem.~\ref{lem:LSE} below, which gathers classical properties of the log-sum-exp operator (see e.g.~\cite[Ex.~3.14]{boyd2004convex}).} The function $E$ in~\eqref{eq:dual} is concave, as minus the composition of: an affine function, by a LSE, by a sum with nonnegative weights, by a LSE. It is also $1$-Lipschitz continuous, as a composition of $1$-Lipschitz continuous functions and its differential is given for $\delta \psi_k\in \Cc(\Xx)$ by
\begin{align*}
DE[\bpsi](\delta \psi_k) = w_k\int \delta \psi_k\d\nu_k - w_k\int \delta \psi_k(y)e^{(\phi_{\psi_k,\nu_k}(x)+\psi_k(y)-c(x,y))/\lambda +(\chi_\bpsi(x)-V_\bpsi(x))/\tau}\d x \d\nu_k(y)
\end{align*}
Note that the second integral is the integral of $\delta \psi_k$ against a probability measure $\gamma_{\psi_k}(\d x,\d y) \in \Pi(\mu_\bpsi,\nu_k)$ which can be disintegrated as $\gamma_{\psi_k}(\d y|x)\mu_\bpsi(\d x)$. Differentiating once more, we obtain
\begin{align*}
D^2E[\bpsi](\delta \psi,\delta \psi) =-\frac{1}{\tau} \mathbf{Var}_{x\sim \mu_\bpsi}\Big[\sum_{k=1}^K w_k \E_{\gamma_{\psi_k}(\cdot |x)} [\delta \psi_k]\Big] -\frac{1}{\lambda}\E_{x\sim \mu_\bpsi}\Big[\sum_{k=1}^K w_k \mathbf{Var}_{\gamma_{\psi_k}(\cdot|x)}(\delta \psi_k)\Big].
\end{align*}
It follows by the law of total variance that 
$$
-\frac{1}{ \max\{\tau,\lambda\}} \mathbf{Var}\Big[  \sum_{k=1}^K w_k\delta \psi_k(y_k) \Big] \geq D^2E[\bpsi](\delta \psi,\delta \psi) \geq -\frac{1}{ \min\{\tau,\lambda\}} \mathbf{Var}\Big[  \sum_{k=1}^K w_k\delta \psi_k(y_k) \Big] 
$$
where the variance is under $(x,y_1,\dots,y_k)$ such that each couple $(x,y_k)$ is distributed according to $ \gamma_{\psi_k}$. This shows that $E$ is $\frac{1}{ \min\{\tau,\lambda\}}$-smooth for the seminorm $\bpsi\mapsto \sum w_k \Vert \psi_k\Vert_\mathrm{osc}$, and also shows the uniqueness of the dual solution up to constant shifts. Finally, it can be checked that the Schr\"odinger potentials associated to the minimizer~\eqref{eq:characterization} satisfy the optimality conditions, so they are maximizers of $E$ and have a sup-norm bounded by $\Vert c\Vert_{\mathrm{osc}}$.
\end{proof}
\rev{
\begin{lemma}[Properties of log-sum-exp]\label{lem:LSE}
For $\nu\in \Pp(\Xx)$ and $\alpha>0$ consider the log-sum-exp operator $\LSE_\nu : 
\Cc(\Xx\times \Xx) \to \Cc(\Xx)$ defined for $f\in \Cc(\Xx\times \Xx)$ and $x\in \Xx$ by
\begin{align}
\LSE_\nu(f)(x) = \alpha \log \int e^{f(x,y)/\alpha}\d\nu(y).
\end{align}
Then the following properties are satisfied:
\begin{enumerate}[(i)]
\item if $f(\cdot,y)$ is convex for every $y\in \Xx$ then $x\mapsto \mathrm{LSE}_\nu(f)(x)$ is a convex function;
\item $\mathrm{LSE}_\nu$ is $1$-Lipschitz continuous and  $1/\alpha$-smooth for the sup-norm.
\end{enumerate}
\end{lemma}
\begin{proof}
Let $f\in \Cc(\Xx\times \Xx)$ be such that $f(\cdot,y)$ is convex for any $y \in \Xx$. Then for any $\theta \in ]0,1[$ and $x,x'\in \Xx$ it holds
\begin{align*}
\LSE_\nu(f)(\theta x+(1-\theta)x') &= \alpha \log \int e^{f(\theta x+(1-\theta)x',y)/\alpha}\d\nu(y)\\
&\leq  \alpha \log \int e^{(\theta f(x,y)+(1-\theta)f(x',y))/\alpha}\d\nu(y)\\
&\leq  \alpha \log \Big\{\Big(\int e^{f(x,y)/\alpha}\d\nu(y)\Big)^{\theta}\Big(\int e^{f(x',y)/\alpha}\d\nu(y)\Big)^{1-\theta}\Big\}\\
&=\theta \LSE_\nu(f)(x) + (1-\theta) \LSE_\nu(f)(x')
\end{align*}
where we have used Hölder's inequality (with exponents $(p,q)=(1/\theta,1/(1-\theta))$) in the third line. This proves the claim (i).

The log-sum-exp function is a composition of the map $f\mapsto e^{f/\alpha}$ which is\footnote{The notation $\Cc_{>0}(\Xx)$ represents here the subset of $\Cc(\Xx)$ of the continuous and positive functions over $\Xx$.} $\Cc^\infty(\Cc(\Xx\times \Xx);\Cc_{>0}(\Xx\times \Xx))$, the map $g\mapsto \int g(\cdot,y)\d\nu(y))$ which is a continuous (positive) linear map from $\Cc(\Xx\times \Xx)$ to $\Cc(\Xx)$ and the map $h\mapsto \alpha \log(h)$ which is $\Cc^\infty(\Cc_{>0}(\Xx );\Cc(\Xx))$. Therefore, as a composition of these three maps, $\mathrm{LSE}_\nu$ is $\Cc^\infty(\Cc(\Xx\times \Xx);\Cc(\Xx))$.
Then Claim (ii) can be seen from the expression of the first and second order Fréchet differentials which, for $\delta f \in \Cc(\Xx\times \Xx)$ can be computed as
\begin{align*}
D\mathrm{LSE}_\nu[f](\delta f) = x\mapsto \E_{Y_x}[ \delta f(x,Y_x)] && D^2\mathrm{LSE}_\nu[f](\delta f,\delta f) = x \mapsto \frac{1}{\alpha}\mathbf{Var}_{Y_x}[\delta f(x,Y_x)]
\end{align*}
where $\E_{Y_x}$ (resp.~$ \mathbf{Var}_{Y_x}$) denote the expectation (resp.~centered variance) for $Y_x$ distributed according to the probability measure proportional to $e^{f(x,y)/\alpha}\nu(\d y)$. Clearly $\Vert D\mathrm{LSE}_\nu[f](\delta f)\Vert_\infty\leq \Vert \delta f\Vert_\infty$, so $\LSE_\nu$ is $1$-Lipschitz continuous. Moreover, $\Vert D^2\mathrm{LSE}_\nu[f](\delta f,\delta f)\Vert_\infty\leq \alpha^{-1} \Vert \delta f\Vert_\infty^2$ so $\LSE_\nu$ is $1/\alpha$-smooth as we recognize a characterization of smoothness for twice Fréchet differentiable functions.
\end{proof}
}

\rev{\paragraph{Dual formulations when $\tau\geq \lambda$.} For the sake of completeness, let us mention other useful dual formulations that become available when $\tau\geq \lambda$. Using Eq.~\eqref{eq:change-ref-measure} (below) to change the reference measure from $\mu\otimes \nu_k$ to $(\d x)\otimes \nu_k$, the problem defining $\mu^*_{\lambda,\tau}$ can then be rewritten (up to constants that we ignore)
$$
\min_{\mu \in \Pp(\Xx)} \min_{\gamma_k\in \Pi(\mu,\nu_k)} \int c(x,y)\d\gamma_k(x,y)+\lambda \KL(\gamma_k|\d x \otimes \nu_k) + (\tau-\lambda)H(\mu)
$$
Using the dual formulation of EOT, this problem becomes
$$
\min_{\mu \in \Pp(\Xx)} \max_{\bphi,\bpsi} \sum_k w_k \int \varphi_k\d\mu +\int \psi_k \d\nu_k +\lambda \Big( 1-\int e^{(\phi_k(x)+\psi_k(y)-c(x,y))/\lambda}\d x \d\nu_k(y) \Big) + (\tau-\lambda) H(\mu)
$$

\begin{itemize}
\item  When $\tau=\lambda$, exchanging min/max and minimizing over $\mu\in \Pp(\Xx)$ and one gets the dual problem~\cite[Prop.~9.1]{peyre2019computational}
\begin{align*}
\max_{\bphi,\bpsi} &\quad \sum_{k=1}^K w_k  \int \psi_k\d\nu_k + \lambda \sum_{k}^K w_k \Big( 1-\int e^{(\phi_k(x)+\psi_k(y)-c(x,y))/\lambda}\d x \d \nu_k(y)  \Big)\\ \text{subject to} &\quad \sum_{k=1}^K w_k \phi_k=0
\end{align*}
with, at optimality, $\mu^*_{\lambda,\lambda}(\d x) = \int e^{(\phi_k(x)+\psi_k(y)-c(x,y))/\lambda} \d \nu_k(y)\d x$ for any $k\in \{1,\dots,K\}$. Alternate maximization on the blocks $\bphi$ and $\bpsi$ leads to a convenient Sinkhorn-like algorithm when the Lebesgue measure is discretized (see~\cite{kroshnin2019complexity} for a complexity analysis).
\item When $\tau> \lambda$, again exchanging min/max and minimizing over $\mu\in \Pp(\Xx)$ leads to another dual formulation which was proposed in~\cite[Prop.~3.2]{ballu2020stochastic}:
\begin{multline*}
\max_{\bphi,\bpsi} \quad \sum_{k=1}^K w_k  \int \psi_k\d\nu_k + \lambda \sum_{k}^K w_k \Big( 1-\int e^{(\phi_k(x)+\psi_k(y)-c(x,y))/\lambda}\d x \d \nu_k(y)  \Big)\\
-(\tau-\lambda)\log \int e^{-\frac{\sum_{k=1}^K w_k \phi_k(x)}{\tau-\lambda}}\d x
\end{multline*}
with, at optimality, $\mu^*_{\lambda,\tau}(\d x) \propto e^{-\frac{\sum_{k=1}^K w_k \phi_k(x)}{\tau-\lambda}}\d x$. This formulation is exploited in~\cite{ballu2020stochastic} to derive an efficient stochastic optimization scheme.
\end{itemize}
} 
\subsection{Extensions}
Let us conclude this section with a discussion of our setting and potential extensions. 
\begin{itemize}
\item (General ambient space) The definition of $(\lambda,\tau)$-barycenters would make sense in the more general context where $\Xx$ is a Polish space with an outer regularization $\KL(\cdot|\mu_{\mathrm{ref}})$ with a reference measure $\mu_{\mathrm{ref}}\in \Pp(\Xx)$ replacing the Lebesgue measure. In particular, the compactness assumption is not necessary, provided that the cost satisfies certain integrability conditions (see~\cite{nutz2021introduction} for a review of EOT under weak assumptions). In this paper, we focus on the compact case on $\RR^d$ for simplicity and because, to date, Prop.~\ref{prop:propertiesEOT}-(i) and Prop.~\ref{prop:expansion}-(i) which we use below are only known in this setting.
\item (Infinite number of marginals) The problem of Wasserstein barycenter is often formulated~\cite{agueh2017vers} in the more general form where the EOT barycenter functional is an expectation under some distribution $P\in \Pp(\Pp(\Xx))$ instead of a finite sum, i.e.~
\begin{align}\label{eq:p-of-p}
G_\lambda(\mu) = \int_{\Pp(\Xx)} T_\lambda(\mu,\nu)\d P(\nu).
\end{align}
The $(\lambda,\tau)$-barycenters could also be studied in this setting, where interesting questions arise related to estimation rates and stability.
\end{itemize}

\section{Approximating the Wasserstein barycenter}\label{sec:approximation}
In this section, we study the approximation error, that is the difference between $(\lambda,\tau)$-barycenters and the $(0,0)$-barycenter. We also prove approximation error bounds for the Sinkhorn divergence barycenter as well. Our goal is to show that the double regularization is not just a convenient trick to obtain nice properties for $\mu^*_{\lambda,\tau}$, since it also helps approximating $\mu^*_{0,0}$ better when $\lambda\to 0$, in particular when $\tau\sim \nicefrac{\lambda}{2}$. We mention however that we do not specially advocate choosing a small $\lambda$ in practice, as the other desirable properties of $(\lambda,\nicefrac{\lambda}2)$-barycenters degrade very quickly as $\lambda$ decreases (see~\cite{chizat2020faster} for an analysis of the trade-offs in choosing $\lambda$ in a similar context).

\subsection{Reduced approximation error for smooth densities}
In this section we discuss the case of the quadratic cost and smooth marginals. At the heart of our approximation result is the following known comparison between $T_\lambda$ and $T_0$.

\begin{proposition}\label{prop:expansion}
Assume that $\mu$ and $\nu$ have bounded densities on $\Xx$ and let $c(x,y)=\frac12 \Vert y-x\Vert_2^2$. Then
\begin{align}\label{eq:entropic-expansion-bound}
T_0(\mu,\nu)\leq T_\lambda(\mu,\nu) + \frac{d\lambda}{2}  \log(2\pi\lambda) + \frac{\lambda}{2} (H(\mu)+H(\nu)) \leq   T_0(\mu,\nu) + \frac{\lambda^2}{8}I(\mu,\nu) 
\end{align}
where $I(\mu,\nu)$ is the integrated Fisher information of the Wasserstein geodesic $(\rho_t\d x)_{t\in [0,1]}$ that connects $\mu$ to $\nu$, i.e. $I(\mu,\nu)\coloneqq \int_0^1\int_\Xx \Vert \nabla \log \rho_t(x)\Vert^2\rho_t(x)\d x\d t\geq 0$. Moreover, if $I(\mu,\nu)<\infty$ then
\begin{align}\label{eq:entropic-expansion}
 T_\lambda(\mu,\nu) + \frac{d\lambda}{2}  \log(2\pi\lambda) + \frac{\lambda}{2} (H(\mu)+H(\nu)) =   T_0(\mu,\nu) + \frac{\lambda^2}{8}I(\mu,\nu)  +o(\lambda^2).
\end{align}
\end{proposition}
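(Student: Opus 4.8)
The engine of the argument is the dynamic (Benamou--Brenier--Schr\"odinger) reformulation of the middle quantity in~\eqref{eq:entropic-expansion-bound}. Write $\mathcal{T}_\lambda(\mu,\nu) \coloneqq T_\lambda(\mu,\nu) + \frac{d\lambda}{2}\log(2\pi\lambda) + \frac{\lambda}{2}(H(\mu)+H(\nu))$. The plan starts from the identity
\[
\mathcal{T}_\lambda(\mu,\nu) = \inf_{(\rho,v)} \int_0^1\!\!\int_\Xx \Big(\tfrac12\|v_t(x)\|^2 + \tfrac{\lambda^2}{8}\|\nabla \log \rho_t(x)\|^2\Big)\,\rho_t(x)\,\d x\,\d t,
\]
the infimum being over curves $t\mapsto \rho_t\,\d x$ in $\Pp(\Xx)$ with $\rho_0\,\d x=\mu$, $\rho_1\,\d x=\nu$ solving the continuity equation $\partial_t\rho_t + \nabla\cdot(\rho_t v_t)=0$. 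To obtain it one first rewrites the EOT cost with the Gaussian heat kernel $p_\lambda$ as reference measure: since $e^{-c(x,y)/\lambda}=(2\pi\lambda)^{d/2}p_\lambda(x,y)$ for the quadratic cost, a direct computation of relative entropies shows that the constants $\frac{d\lambda}{2}\log(2\pi\lambda)$ and $\frac\lambda2(H(\mu)+H(\nu))$ are precisely what converts $T_\lambda(\mu,\nu)$ into the (symmetrised) static Schr\"odinger problem, which then admits the hydrodynamic form above via the Hopf--Cole/Madelung transform, the Fisher term being the Bohm osmotic potential. I would cite this reformulation rather than reprove it (Gentil--L\'eonard--Ripani; Chen--Georgiou--Pavon; Conforti--Tamanini), after double-checking the constants against the closed-form value of $\mathcal{T}_\lambda(\mu,\mu)$ for a Gaussian $\mu$, which equals $\tfrac{\lambda^2}{8}I(\mu)+o(\lambda^2)$.

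Granting this identity, the two-sided bound~\eqref{eq:entropic-expansion-bound} is immediate. For the upper bound, insert the $W_2$-geodesic $(\rho^0,v^0)$ from $\mu$ to $\nu$ as a competitor: it is admissible because $\mu$ and $\nu$ have bounded, compactly supported densities, its kinetic action equals $T_0(\mu,\nu)$ by Benamou--Brenier, and its Fisher action equals $\frac{\lambda^2}{8}\int_0^1 I(\rho^0_t)\,\d t=\frac{\lambda^2}{8}I(\mu,\nu)$ by definition of $I(\mu,\nu)$; hence $\mathcal{T}_\lambda(\mu,\nu)\leq T_0(\mu,\nu)+\frac{\lambda^2}{8}I(\mu,\nu)$ (vacuously true when $I(\mu,\nu)=+\infty$). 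For the lower bound, drop the nonnegative Fisher term in the infimum and recognise the remaining minimisation as the Benamou--Brenier formula for $T_0(\mu,\nu)$, so that $\mathcal{T}_\lambda(\mu,\nu)\geq T_0(\mu,\nu)$.

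For the refined expansion~\eqref{eq:entropic-expansion} assume $I(\mu,\nu)<\infty$; by the two-sided bound it suffices to show $\liminf_{\lambda\downarrow 0}\lambda^{-2}\big(\mathcal{T}_\lambda(\mu,\nu)-T_0(\mu,\nu)\big)\geq \tfrac18 I(\mu,\nu)$. Let $(\rho^\lambda,v^\lambda)$ minimise the dynamic functional (a minimiser exists by convexity in $(\rho,\rho v)$ and lower semicontinuity, and $\mathcal{T}_\lambda<\infty$). Bounding its kinetic action from below by $T_0(\mu,\nu)$ gives $\lambda^{-2}(\mathcal{T}_\lambda-T_0)\geq \tfrac18\int_0^1 I(\rho^\lambda_t)\,\d t$. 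Since that kinetic action is at most $\mathcal{T}_\lambda(\mu,\nu)\to T_0(\mu,\nu)$, the curves $(\rho^\lambda)$ are asymptotically $W_2$-geodesics, hence converge (in $W_2$, uniformly in $t$) to the unique geodesic $(\rho^0)$ between the absolutely continuous measures $\mu,\nu$; weak lower semicontinuity of the Fisher information together with Fatou's lemma then yields $\liminf_\lambda \int_0^1 I(\rho^\lambda_t)\,\d t \geq \int_0^1 I(\rho^0_t)\,\d t = I(\mu,\nu)$, as required.

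I expect the refinement to be the delicate step: one must make rigorous the implication ``kinetic action $\to T_0(\mu,\nu)$'' $\Rightarrow$ ``the curves converge to the $W_2$-geodesic'' (uniqueness and stability of geodesics between a.c.\ measures), and control the Fisher information along the entire path and not just at its endpoints. Since the statement is classical, a legitimate shortcut is to invoke the known second-order expansion of entropic optimal transport directly (e.g.\ Conforti--Tamanini, Pal, or Chizat et al.) and reproduce only the short Steps above. The reformulation also needs mild care near $\partial\Xx$, but the supports of $\mu$, $\nu$ and of the interpolating geodesic are compactly contained in the interior of their convex hull, so this causes no difficulty here.
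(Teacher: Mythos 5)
Your argument is correct and follows essentially the same route as the paper: the paper simply cites \cite[Thm.~1 and Lem.~1]{chizat2020faster} and \cite{conforti2021formula}, whose proofs are precisely the dynamic (Benamou--Brenier--Schr\"odinger) reformulation with the Fisher--information term, the $W_2$-geodesic as competitor for the upper bound, dropping the nonnegative Fisher term for the lower bound, and a $\Gamma$-convergence-type liminf argument for the refined expansion. Your reconstruction fills in the steps the paper delegates to those references, and you rightly flag the stability-of-geodesics and lower-semicontinuity-of-Fisher-information points as the delicate parts of the refinement.
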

The logarithmic derivative $\nabla \log \rho_t$ appearing in the statement is the density of the distributional gradient $\nabla \rho_t$ with respect to $\rho_t$ when it exists, and $I_0(\mu,\nu)=+\infty$ if this quantity is not defined for a.e. $t\in [0,1]$. As shown in~\cite[Thm.~1]{chizat2020faster}, the first claim (Eq.~\eqref{eq:entropic-expansion-bound}) is a direct consequence of a dynamical formulation of $T_\lambda$~\cite{chen2016relation}. The second claim (Eq.~\eqref{eq:entropic-expansion}) was proved in~\cite[Lem.~1]{chizat2020faster} and then in~\cite{conforti2021formula} (who first formulated the ansatz) in a more general setting. It is a more precise version of previous first-order expansions~\cite{duong2013wasserstein, erbar2015large, pal2019difference}. Let us also mention that~\cite[Prop.~1]{chizat2020faster} gives a priori bounds on $I(\mu,\nu)$ in terms of the derivatives up to order $3$ of Kantorovich potentials. 

\rev{We deduce from Eq.~\eqref{eq:entropic-expansion} that, if $\mu,\nu_1,\dots,\nu_K$ have bounded densities, it holds
$$
F_{\lambda,\tau}(\mu) = G_0(\mu) +\big(\tau-\frac{\lambda}{2}\big)H(\mu) +\frac{\lambda^2}{8} \sum_{k=1}^K w_k I(\mu,\nu_k) +o(\lambda^2) + C
$$
for some $C\in \RR$ that does not depend on $\mu$, and thus does not affect the minimizers. This shows that the choice $\tau=\lambda/2$ stands out, as this cancels exactly the leading order error term between $F_{\lambda,\tau}$ and $F_{0,0}=G_0$. Of course, the same holds for any choice of $\lambda,\tau\to 0$ that is such that $\tau \sim \nicefrac{\lambda}{2}$ although we focus on $\tau=\lambda/2$ in the following for simplicity.} An approximation bound in terms of suboptimality gap for the Wasserstein barycenter functional easily follows.
\begin{assbox}
\begin{theorem}[Approximation bound]\label{thm:approx}
Assume that $\nu_1,\dots,\nu_K$ have bounded densities and let $c(x,y)=\frac12\Vert y-x\Vert_2^2$. Then for $\lambda>0$ the $(\lambda,\nicefrac{\lambda}2)$-barycenter satisfies
\begin{align}
G_0(\mu^*_{\lambda,\lambda/2})-G_0(\mu^*_{0,0})\leq \frac{\lambda^2 }{8} \sum_{k=1}^K w_k I(\mu^*_{0,0},\nu_k)
\end{align}
where $I$ is defined in Prop.~\ref{prop:expansion}. For the Sinkhorn divergence barycenter \rev{$\mu^{*}_{\lambda,\mathrm{div}}$}, it holds:
$$
G_0(\mu^{*}_{\lambda,\mathrm{div}})-G_0(\mu^*_{0,0})\leq \frac{\lambda^2}{8}\sum_{k=1}^K w_k \Big(I(\mu^*_{0,0},\nu_k) +\frac12 I(\mu^*_{\lambda,\mathrm{div}})+\frac12 I(\nu_k)\Big)
$$
where $I(\mu)\coloneqq I(\mu,\mu)$ denotes the Fisher information of $\mu$.
\end{theorem}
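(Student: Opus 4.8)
The plan is to exploit the double upper bound in Proposition~\ref{prop:expansion} together with the optimality of $\mu^*_{\lambda,\lambda/2}$ for $F_{\lambda,\lambda/2}$ and the optimality of $\mu^*_{0,0}$ for $G_0$. First I would write, for any $\mu$, the chain
\begin{align*}
G_0(\mu^*_{\lambda,\lambda/2}) &\leq G_\lambda(\mu^*_{\lambda,\lambda/2}) + \tfrac{\lambda}{2}\textstyle\sum_k w_k\big(H(\mu^*_{\lambda,\lambda/2})+H(\nu_k)\big) + \tfrac{d\lambda}{2}\log(2\pi\lambda)\\
&= F_{\lambda,\lambda/2}(\mu^*_{\lambda,\lambda/2}) + \tfrac{\lambda}{2}\textstyle\sum_k w_k H(\nu_k)+\tfrac{d\lambda}{2}\log(2\pi\lambda)\\
&\leq F_{\lambda,\lambda/2}(\mu^*_{0,0}) + \tfrac{\lambda}{2}\textstyle\sum_k w_k H(\nu_k)+\tfrac{d\lambda}{2}\log(2\pi\lambda),
\end{align*}
using the left inequality of~\eqref{eq:entropic-expansion-bound} applied with $\mu=\mu^*_{\lambda,\lambda/2}$ and each $\nu=\nu_k$, then the definition $F_{\lambda,\lambda/2}=G_\lambda+\tfrac\lambda2 H$, then minimality of $\mu^*_{\lambda,\lambda/2}$. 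Next, I would bound $F_{\lambda,\lambda/2}(\mu^*_{0,0})=G_\lambda(\mu^*_{0,0})+\tfrac\lambda2 H(\mu^*_{0,0})$ from above using the right inequality of~\eqref{eq:entropic-expansion-bound}: for each $k$, $T_\lambda(\mu^*_{0,0},\nu_k)+\tfrac{d\lambda}{2}\log(2\pi\lambda)+\tfrac\lambda2(H(\mu^*_{0,0})+H(\nu_k))\leq T_0(\mu^*_{0,0},\nu_k)+\tfrac{\lambda^2}{8}I(\mu^*_{0,0},\nu_k)$. Summing against $w_k$ and substituting, all the $\tfrac{d\lambda}{2}\log(2\pi\lambda)$ and $\tfrac\lambda2\sum_k w_k H(\nu_k)$ terms cancel between the two sides, leaving exactly $G_0(\mu^*_{\lambda,\lambda/2})\leq G_0(\mu^*_{0,0})+\tfrac{\lambda^2}{8}\sum_k w_k I(\mu^*_{0,0},\nu_k)$. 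This is the key algebraic cancellation enabled by the choice $\tau=\lambda/2$, and it is really the whole content of the first bound.

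For the Sinkhorn divergence barycenter the strategy is the same but one must keep track of the extra self-term. Recall $\mu^*_{\lambda,\mathrm{div}}$ minimizes $\mu\mapsto G_\lambda(\mu)-\tfrac12 T_\lambda(\mu,\mu)$. I would start from $G_0(\mu^*_{\lambda,\mathrm{div}})\leq G_\lambda(\mu^*_{\lambda,\mathrm{div}})+\tfrac\lambda2\sum_k w_k(H(\mu^*_{\lambda,\mathrm{div}})+H(\nu_k))+\tfrac{d\lambda}{2}\log(2\pi\lambda)$ by the left inequality of~\eqref{eq:entropic-expansion-bound}. The term $\tfrac\lambda2\sum_k w_k H(\mu^*_{\lambda,\mathrm{div}}) = \tfrac\lambda2 H(\mu^*_{\lambda,\mathrm{div}})$ must now be absorbed: I would use the left inequality of~\eqref{eq:entropic-expansion-bound} applied to the \emph{pair} $(\mu^*_{\lambda,\mathrm{div}},\mu^*_{\lambda,\mathrm{div}})$, namely $T_0(\mu,\mu)\leq T_\lambda(\mu,\mu)+\tfrac{d\lambda}{2}\log(2\pi\lambda)+\lambda H(\mu)$, equivalently $-\tfrac12 T_\lambda(\mu,\mu)\leq -\tfrac12 T_0(\mu,\mu)+\tfrac{d\lambda}{4}\log(2\pi\lambda)+\tfrac\lambda2 H(\mu)$; but $T_0(\mu,\mu)=0$, so in fact $-\tfrac12 T_\lambda(\mu,\mu)+\tfrac\lambda2 H(\mu) \geq -\tfrac{d\lambda}{4}\log(2\pi\lambda)$ — wait, this goes the wrong direction, so instead I would use the \emph{right} inequality: $T_\lambda(\mu,\mu)+\tfrac{d\lambda}{2}\log(2\pi\lambda)+\lambda H(\mu)\leq \tfrac{\lambda^2}{8}I(\mu,\mu)$, i.e. $\tfrac\lambda2 H(\mu) \leq -\tfrac12 T_\lambda(\mu,\mu) - \tfrac{d\lambda}{4}\log(2\pi\lambda)+\tfrac{\lambda^2}{16}I(\mu)$. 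Substituting this with $\mu=\mu^*_{\lambda,\mathrm{div}}$ converts $G_\lambda(\mu^*_{\lambda,\mathrm{div}})+\tfrac\lambda2 H(\mu^*_{\lambda,\mathrm{div}})$ into the Sinkhorn divergence objective $G_\lambda(\mu^*_{\lambda,\mathrm{div}})-\tfrac12 T_\lambda(\mu^*_{\lambda,\mathrm{div}},\mu^*_{\lambda,\mathrm{div}})$ plus controlled remainders, at which point I can invoke minimality of $\mu^*_{\lambda,\mathrm{div}}$ to replace it by the same objective evaluated at $\mu^*_{0,0}$, and finally expand that one back using~\eqref{eq:entropic-expansion-bound} (the right inequality for the $G_\lambda$ part, and for $-\tfrac12 T_\lambda(\mu^*_{0,0},\mu^*_{0,0})$ the left inequality, giving $+\tfrac12 I(\mu^*_{0,0})$-type terms — though note $T_0(\mu^*_{0,0},\mu^*_{0,0})=0$ again). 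Carefully collecting, the constants $\log(2\pi\lambda)$ and $\sum_k w_k H(\nu_k)$ cancel and one is left with $\tfrac{\lambda^2}{8}\sum_k w_k\big(I(\mu^*_{0,0},\nu_k)+\tfrac12 I(\mu^*_{\lambda,\mathrm{div}})+\tfrac12 I(\nu_k)\big)$ as claimed.

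The main obstacle I anticipate is purely bookkeeping: making sure every application of~\eqref{eq:entropic-expansion-bound} uses the correct direction (the left inequality when we want to pass from $T_0$ down to $T_\lambda$ plus entropy, the right one when we go the other way), and that the self-terms $T_\lambda(\mu,\mu)$ are handled with the right sign so that the Fisher-information remainder comes out positive and with the stated coefficients $\tfrac12$. There is a mild subtlety in that Prop.~\ref{prop:expansion} requires bounded densities with compact support: $\mu^*_{\lambda,\lambda/2}$ has a bounded (indeed smooth) density by Thm.~\ref{thm:characterization}, and $\mu^*_{0,0}$ is absolutely continuous under the stated hypotheses, but if $I(\mu^*_{0,0},\nu_k)=+\infty$ the bound is vacuous so we may assume finiteness; I would add a sentence noting that when the right-hand sides are infinite there is nothing to prove, and otherwise all quantities appearing are finite so the manipulations are legitimate. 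No delicate analysis is needed beyond this — the theorem is essentially a one-line consequence of Prop.~\ref{prop:expansion} once the algebra is organized, which is exactly the point the authors want to make about the $\tau=\lambda/2$ choice.
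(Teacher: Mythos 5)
Your proof of the first bound is correct and is essentially the paper's proof written out explicitly: where you expand $T_\lambda + \tfrac{d\lambda}{2}\log(2\pi\lambda)+\tfrac\lambda2(H(\mu)+H(\nu_k))$ in each step, the paper simply abbreviates this as $\tilde T_\lambda$, observes that the resulting functional differs from $F_{\lambda,\lambda/2}$ only by a constant (so shares its minimizer), and telescopes exactly as you do. Same decomposition, same cancellation, same conclusion.

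For the Sinkhorn divergence bound, the structure is again the same telescoping around minimality of $\mu^*_{\lambda,\mathrm{div}}$, but your sketch contains two small bookkeeping slips worth flagging. First, when you bound $-\tfrac12 T_\lambda(\mu^*_{0,0},\mu^*_{0,0})$ from above you correctly use the \emph{left} inequality of Eq.~\eqref{eq:entropic-expansion-bound} with $T_0(\mu,\mu)=0$, but that inequality has \emph{no} Fisher-information term; your parenthetical ``giving $+\tfrac12 I(\mu^*_{0,0})$-type terms'' is wrong (and no such term appears in the final bound either). Second, if you actually carry through your bookkeeping -- in which the constant $-\tfrac12\sum_k w_k T_\lambda(\nu_k,\nu_k)$ cancels \emph{exactly} between $G^{\mathrm{div}}_\lambda(\mu^*_{\lambda,\mathrm{div}})$ and $G^{\mathrm{div}}_\lambda(\mu^*_{0,0})$ -- the $I(\nu_k)$ terms never appear, and you end up with the strictly tighter bound
\[
G_0(\mu^*_{\lambda,\mathrm{div}})-G_0(\mu^*_{0,0})\ \leq\ \frac{\lambda^2}{8}\sum_k w_k I(\mu^*_{0,0},\nu_k)\ +\ \frac{\lambda^2}{16}\,I(\mu^*_{\lambda,\mathrm{div}}),
\]
not the stated one. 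The paper's version is slightly lossier because it bounds each of the three brackets separately via a two-sided sandwich on $S_\lambda$ itself, and the lower bound $S_\lambda(\mu,\nu)\geq T_0(\mu,\nu)-\tfrac{\lambda^2}{16}(I(\mu)+I(\nu))$ re-introduces an $I(\nu_k)$ term that your telescoping would have cancelled. So your argument is sound and in fact proves something marginally stronger, but the claim that you ``are left with'' the paper's exact coefficients is not accurate -- you should either present the tighter bound or deliberately discard the cancellation (as the paper implicitly does) to match the statement.

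One small remark on the finiteness caveat: it is harmless but unnecessary. The two-sided inequality~\eqref{eq:entropic-expansion-bound} holds unconditionally for bounded-density compactly supported measures; finiteness of $I$ is only required for the asymptotic expansion~\eqref{eq:entropic-expansion}, which the proof never uses.
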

\end{assbox}
\begin{proof}
First notice that Prop.~\ref{prop:expansion} indeed applies for any couple of the form $(\mu^*_{\lambda,\lambda/2},\nu_k)$: the bounded density assumption holds by Thm.~\ref{thm:characterization} for $\lambda,\tau>0$ and by~\cite[Thm.~5.1]{agueh2011barycenters} for $\lambda=0$ since the Wasserstein barycenter has a bounded density.
Let us call $\tilde T_\lambda$ the quantity that is sandwiched in Eq.~\eqref{eq:entropic-expansion-bound} and let us define $\tilde F_\lambda(\mu) \coloneqq \sum_{k=1}^K w_k\tilde T_\lambda(\mu,\nu_k)$ which differs from $F_{\lambda,\lambda/2}$ only by a constant. Since $\mu^*_{\lambda,\lambda/2}$ is the minimizer of $F_{\lambda,\lambda/2}$, it is also the minimizer of $\tilde F_\lambda$, so for any $\lambda\geq 0$, $\tilde F_\lambda(\mu^*_{\lambda,\lambda/2})\leq \tilde F_\lambda(\mu^*_{0,0})$.
By Prop.~\ref{prop:expansion}, it holds for any $\mu\in \Pp_2(\Xx)$ \rev{with a bounded density} that
 $$ 0\leq \tilde T_\lambda(\mu,\nu_k) - T_0(\mu,\nu_k) \leq \frac{\lambda^2}{8}I(\mu,\nu_k).$$ 
Taking the weighted sum over $k\in\{1,\dots, K\}$, we get $0\leq  \tilde F_\lambda(\mu)-G_0(\mu) \leq \frac{\lambda^2}{8}\sum_{k=1}^K w_k I(\mu,\nu_k)$. It follows
\begin{multline*}
G_0(\mu^*_{\lambda,\lambda/2}) - G_0(\mu^*_{0,0})\\
\leq [G_0(\mu^*_{\lambda,\lambda/2}) - \tilde F_\lambda(\mu^*_{\lambda,\lambda/2})]
 + [ \tilde F_\lambda(\mu^*_{\lambda,\lambda/2}) -  \tilde F_\lambda(\mu^*_{0,0})]
 +[\tilde F_\lambda(\mu^*_{0,0}) - G_0(\mu^*_{0,0})]\\
\leq 0 + 0 + \frac{\lambda^2}{8}\sum_{k=1}^K w_k I(\mu^*_{0,0},\nu_k).
\end{multline*}

As for the Sinkhorn divergence barycenter, it is the minimizer of $G^\mathrm{div}_\lambda  \coloneqq \sum_{k=1}^K w_kS_\lambda(\cdot,\nu_k)$ where
$
S_{\lambda}(\mu,\nu) \coloneqq T_\lambda(\mu,\nu) - \frac12 T_\lambda(\mu,\mu)-\frac12 T_\lambda(\nu,\nu)
$
is the Sinkhorn divergence. After manipulating inequalities~\eqref{eq:entropic-expansion-bound}, we obtain for any $\mu,\nu$ \rev{with bounded densities} that
$$
T_0(\mu,\nu) -\frac{\lambda^2}{16}(I(\mu)+I(\nu)) \leq S_{\lambda}(\mu,\nu) \leq T_0(\mu,\nu) +\frac{\lambda^2}{8}I(\mu,\nu).
$$
As before, we sum these inequalities over $k\in \{1,\dots,K\}$ and get
\begin{align*}
G_0(\mu^*_{\lambda,\mathrm{div}}) - G_0(\mu^*_{0,0})&\leq [G_0(\mu^*_{\lambda,\mathrm{div}}) - G^\mathrm{div}_{\lambda}(\mu^*_{\lambda,\mathrm{div}})]
 + [ G^\mathrm{div}_{\lambda}(\mu^*_{\lambda,\mathrm{div}}) -  G^\mathrm{div}_{\lambda}(\mu^*_{0,0})]\\
 &\qquad\qquad\qquad\qquad\qquad\qquad\;\;
 + [G^\mathrm{div}_{\lambda}(\mu^*_{0,0}) - G_0(\mu^*_{0,0})]\\
&\leq \frac{\lambda^2}{16}I(\mu^*_{\lambda,\mathrm{div}}) +\frac{\lambda^2}{16}\sum_{k=1}^K w_k I(\nu_k) + 0 + \frac{\lambda^2}{8}\sum_{k=1}^K w_k I(\mu^*_{0,0},\nu_k).\qedhere
\end{align*}
\end{proof}

Regarding the generality of this result, we can also make the following comments:
\begin{itemize}
\item the choice $\tau=\lambda/2$ leads to approximation benefits  for all costs of the form $(\alpha/2)\Vert y-x\Vert_2^2$ for $\alpha>0$. Indeed, insisting on the dependency in $c$ in the notation, we can always bring ourselves back to the case $\alpha=1$ using that $
F_{\alpha c,\alpha \lambda,\alpha \tau} = \alpha F_{c,\lambda,\tau}.
$
\item For costs with a non-constant Hessian, there appears to be no explicit way to debias the problem since in those cases, as shown in~\cite{pal2019difference}, the first order term in the expansion~\eqref{eq:entropic-expansion} depends on the optimal transport map itself, which is unknown a priori.
\item Note that these bounds involve quantities related to the regularity of the unregularized barycenter for which no a priori bound exist unfortunately.
\end{itemize}

\paragraph{Approximation in $W_2$ distance} While Thm.~\ref{thm:approx} gives approximation bounds in terms of suboptimality gap, one could wish to state approximation bounds in terms of a notion of distance. There is an active line of work on the stability of Wasserstein barycenters (see~\cite{carlier2024displacement} for recent advances). Let us give a result suggesting that in the most favorable settings our bound in Thm.~\ref{thm:approx} gives a direct upper-bound on the squared $L_2$-Wasserstein distance from the barycenter $\mu^*_{0,0}$. The following proposition can be found in~\cite[Thm.~6]{chewi2020gradient}, following earlier results in~\cite{ahidar2020convergence}.

\begin{proposition}
Assume that for each $k\in \{1,\dots,K\}$, there exists a Brenier potential $\tilde \phi_k = \frac12\Vert x\Vert^2 - \phi_k$ from $\mu_{0,0}^*$ to $\nu_k$ that is $\alpha_k>0$ strongly convex over $\Xx$. Then the following \emph{variance inequality} holds
$$
\frac12 W_2^2(\mu,\mu^*_{0,0}) \leq \frac{G_0(\mu) - G_0(\mu^*_{0,0})}{\sum_{k=1}^K w_k \alpha_k}, \quad \forall \mu \in \Pp(\Xx).
$$
\end{proposition}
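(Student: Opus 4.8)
The plan is to establish the stronger \emph{strong-convexity-at-the-barycenter} estimate
\[
G_0(\mu) - G_0(\mu^*_{0,0}) \;\geq\; \int_\Xx V[\mu^*_{0,0}]\,\d(\mu-\mu^*_{0,0}) \;+\; \Big(\sum_{k=1}^K w_k\alpha_k\Big)\tfrac12 W_2^2(\mu,\mu^*_{0,0}),
\]
valid for every $\mu\in\Pp(\Xx)$, and then to discard the (nonnegative) linear term using that $\mu^*_{0,0}$ minimizes $G_0$. Write $\mu_0:=\mu^*_{0,0}$; it has bounded density by~\cite[Thm.~5.1]{agueh2011barycenters}, and for $\lambda=0$ the cost is $c(x,y)=\tfrac12\Vert y-x\Vert_2^2$ so $T_0=\tfrac12 W_2^2$.

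The core is a per-marginal bound obtained by \emph{freezing} the optimal potentials of $(\mu_0,\nu_k)$ and evaluating them at the perturbed pair $(\mu,\nu_k)$. Fix $k\in[K]$ and let $(\phi_k,\psi_k)$ be the optimal Kantorovich potentials for $(\mu_0,\nu_k)$, normalized so that $\tilde\phi_k=\tfrac12\Vert\cdot\Vert^2-\phi_k$ is the hypothesized $\alpha_k$-strongly convex Brenier potential (extended, if necessary, to an $\alpha_k$-strongly convex function on $\RR^d$), with Brenier map $\nabla\tilde\phi_k$ pushing $\mu_0$ to $\nu_k$. Then $\tilde\phi_k^*$ is finite and of class $\Cc^1$, $\nabla\tilde\phi_k^*$ is the inverse map (pushing $\nu_k$ to $\mu_0$, using that $\mu_0$ is absolutely continuous), and $\psi_k=\tfrac12\Vert\cdot\Vert^2-\tilde\phi_k^*$. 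A direct computation gives, for all $x,y$, writing $D_f(a,b):=f(a)-f(b)-\langle\nabla f(b),a-b\rangle$,
\[
\tfrac12\Vert x-y\Vert^2 - \phi_k(x) - \psi_k(y) \;=\; \tilde\phi_k(x)+\tilde\phi_k^*(y)-\langle x,y\rangle \;=\; D_{\tilde\phi_k}\!\big(x,\nabla\tilde\phi_k^*(y)\big) \;\geq\; \tfrac{\alpha_k}{2}\big\Vert x-\nabla\tilde\phi_k^*(y)\big\Vert^2 .
\]
Integrating against an optimal plan $\gamma_k\in\Pi(\mu,\nu_k)$ (so the left integral is $\tfrac12 W_2^2(\mu,\nu_k)-\int\phi_k\,\d\mu-\int\psi_k\,\d\nu_k$) and noting that $(x,y)\mapsto(x,\nabla\tilde\phi_k^*(y))$ pushes $\gamma_k$ to a coupling of $\mu$ and $(\nabla\tilde\phi_k^*)_\#\nu_k=\mu_0$ gives
\[
\tfrac12 W_2^2(\mu,\nu_k) - \int_\Xx\phi_k\,\d\mu - \int_\Xx\psi_k\,\d\nu_k \;\geq\; \tfrac{\alpha_k}{2}\,W_2^2(\mu,\mu_0).
\]
Since optimality of $(\phi_k,\psi_k)$ yields $\tfrac12 W_2^2(\mu_0,\nu_k)=\int\phi_k\,\d\mu_0+\int\psi_k\,\d\nu_k$, subtracting and summing the $w_k$-weighted inequalities over $k$ produces the displayed estimate with $V[\mu_0]=\sum_k w_k\phi_k$.

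To finish, $G_0$ is convex on $\Pp(\Xx)$ and attains its minimum at $\mu_0$, so $t\mapsto G_0((1-t)\mu_0+t\mu)$ is convex on $[0,1]$ with a minimum at $t=0$, hence has nonnegative right-derivative there; by the first-variation property of $G_0$ at $\mu_0$ (the $\lambda=0$ counterpart of Prop.~\ref{prop:propertiesEOT}(ii), classical for the quadratic cost, see e.g.~\cite{santambrogio2015optimal}) that derivative equals $\int_\Xx V[\mu_0]\,\d(\mu-\mu_0)$. Thus this term is $\geq 0$ and may be dropped, leaving $\big(\sum_k w_k\alpha_k\big)\tfrac12 W_2^2(\mu,\mu_0)\leq G_0(\mu)-G_0(\mu_0)$, which is the variance inequality. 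The genuinely delicate step is the per-marginal identity: recognizing that the duality gap of the frozen potentials at $(\mu,\nu_k)$ is \emph{exactly} the Bregman divergence $D_{\tilde\phi_k}(x,\nabla\tilde\phi_k^*(y))$, that strong convexity turns this into a squared distance, and that pre-composing the inverse Brenier map with an optimal plan is a competitor coupling between $\mu$ and $\mu_0$ — so the bound collapses to $W_2^2(\mu,\mu_0)$. Minor care is also needed to phrase everything with transport plans rather than maps (since $\mu$ need not be absolutely continuous) and to ensure $\tilde\phi_k^*$ and $\nabla\tilde\phi_k^*$ are globally defined; these are routine.
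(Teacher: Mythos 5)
The paper does not prove this proposition at all: it is quoted verbatim from Chewi et al.~\cite[Thm.~6]{chewi2020gradient}, itself adapted from the metric-space variance inequalities of~\cite{ahidar2020convergence}. So your proposal is not competing with a proof in the paper, and the relevant question is simply whether it is a correct, self-contained argument. It is, and as far as I can tell it runs along the same line as the cited reference: freeze the Kantorovich potentials $(\phi_k,\psi_k)$ at the barycenter, recognize the duality gap $\tfrac12\Vert x-y\Vert^2-\phi_k(x)-\psi_k(y)$ as the Bregman divergence $D_{\tilde\phi_k}(x,\nabla\tilde\phi_k^*(y))$, use $\alpha_k$-strong convexity to lower-bound it by $\tfrac{\alpha_k}{2}\Vert x-\nabla\tilde\phi_k^*(y)\Vert^2$, and integrate against an optimal plan $\gamma_k\in\Pi(\mu,\nu_k)$, pushing forward through $(x,y)\mapsto(x,\nabla\tilde\phi_k^*(y))$ to produce a competitor coupling between $\mu$ and $\mu^*_{0,0}$. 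All of these steps check out (the algebraic identity $\tilde\phi_k(x)+\tilde\phi_k^*(y)-\langle x,y\rangle=D_{\tilde\phi_k}(x,\nabla\tilde\phi_k^*(y))$ is correct, using $\nabla\tilde\phi_k\circ\nabla\tilde\phi_k^*=\mathrm{id}$ and Young's equality at $\nabla\tilde\phi_k^*(y)$).

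The one step you wave at but which deserves a word is the final sign argument $\int V[\mu^*_{0,0}]\,\d(\mu-\mu^*_{0,0})\geq 0$. The subgradient inequality alone goes the wrong way (it gives that $\int V[\mu^*_{0,0}]\,\d(\mu-\mu^*_{0,0})$ is \emph{bounded above} by the right-derivative, not below by $0$). What closes the gap is that for $\mu^*_{0,0}$ absolutely continuous the Kantorovich potential $\phi_k$ is unique up to constants, so $V[\mu^*_{0,0}]$ is in fact the first variation of $G_0$ (not merely one subgradient among many), and then stationarity of $\mu^*_{0,0}$ on the convex set $\Pp(\Xx)$ forces the directional derivative to be nonnegative. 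Equivalently, one can invoke directly the first-order optimality condition for Wasserstein barycenters from~\cite{agueh2011barycenters}: $\sum_k w_k\phi_k\geq c_0$ on $\Xx$ with equality $\mu^*_{0,0}$-a.e., which yields $\int V[\mu^*_{0,0}]\,\d(\mu-\mu^*_{0,0})\geq 0$ for all $\mu\in\Pp(\Xx)$ without discussing differentiability of $G_0$. Either route is fine; it would just be worth making one of them explicit, since this is where absolute continuity of $\mu^*_{0,0}$ is genuinely used (beyond making $\nabla\tilde\phi_k^*$ the inverse map).
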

As shown in~\cite{chewi2020gradient}, this proposition holds in the case of the barycenter between Gaussian measures. In this case if for each $\nu_k=\Nn(a_k,A_k)$ with $a_k\in \RR^d$ and $A_k$ a positive definite matrix, it holds $\Vert A_k\Vert_{\mathrm{op}}\leq 1$ and $\det A_k\geq \zeta$ then the upper-bound is $\zeta^{-1}(G_0(\mu)-G_0(\mu^*_{0,0}))$. Unfortunately, the Gaussian case is not covered by Thm.~\ref{thm:approx} due to the compactness assumption, but inspecting their proof, it can be seen that the result in fact applies to barycenters of \emph{elliptically-contoured distributions} in the same family, including smooth compact cases with finite Fisher information, hence covered by Thm.~\ref{thm:approx}. See~\cite[Prop.~14]{chizat2020faster} for an explicit example of such a class of distributions.

\subsection{Closed-form for isotropic Gaussian distributions}\label{sec:gaussians}
In this section, we leverage closed-form expressions of entropic OT for Gaussian measures~\cite{chen2016relation, mallasto2022entropy, del2020statistical,janati2020entropic} to get a finer understanding of the role of $\lambda$ and $\tau$ in the approximation error. In order to get closed-form solutions, we focus on the simplest case of the barycenter between a family of isotropic Gaussian distributions with equal variance. Our computations, detailed in Appendix~\ref{app:gaussians}, follow those of~\cite{janati2020debiased}  and extend them by introducing a general parameter $\tau$.

\begin{proposition}\label{prop:gaussians}
For $k\in \{1,\dots,K\}$, let $\nu_k = \Nn(x_k,aI_d)$, for $x_k\in \RR^d$ and $a>0$. Then the $(\lambda,\tau)$-barycenter $\mu^*_{\lambda,\tau}$ is the Gaussian $\Nn(\bar x,bI_d)$ where $\bar x=\sum_{k=1}^K w_kx_k$ and the variance is
\begin{align}\label{eq:gaussian-variance}
b = \frac{\big(a+\sqrt{(a-\lambda)^2+4a\tau}\big)^2-\lambda^2}{4a}.
\end{align}
In particular, one has $b=a$ (i.e. exact debiasing) with the choice
\begin{align}\label{eq:nonasymp-debias}
\tau= \tau^*(\lambda) &= \frac{\lambda}{2} + a\Big(1-\Big(1+\frac{\lambda^2}{4a^2}\Big)^{1/2}\Big) =\frac{\lambda}{2} -\frac{\lambda^2}{8a} +O(\lambda^4/a^3).
\end{align}
\end{proposition}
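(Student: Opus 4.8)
The plan is to look for the barycenter among Gaussians and verify the Gibbs fixed-point equation~\eqref{eq:characterization} of Thm~\ref{thm:characterization}. (Strictly speaking the Gaussian setting falls outside the compactness blanket assumption, but EOT with quadratic cost on $\RR^d$ between measures of finite second moment is still well-posed and $F_{\lambda,\tau}$ remains strictly convex, so any critical point of $F_{\lambda,\tau}$ — i.e.\ a probability density $\rho\propto e^{-V[\rho]/\tau}$ — is automatically the unique minimizer.) Thus it suffices to produce a Gaussian $\Nn(\bar x, bI_d)$ satisfying this equation with $b$ as in~\eqref{eq:gaussian-variance}.

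First I would compute the Schr\"odinger potential between two isotropic Gaussians. Fix $\mu=\Nn(m,bI_d)$ and $\nu=\Nn(n,aI_d)$, take $c(x,y)=\tfrac12\|x-y\|^2$, and look for a solution of the Schr\"odinger system~\eqref{eq:schrodinger-system} with $\psi$ quadratic, $\psi(y)=\tfrac12 q\|y-n\|^2 +\mathrm{const}$. The integral defining $\phi$ is then a Gaussian integral which evaluates in closed form, producing $\phi(x)=\tfrac12 p\|x-\tilde n\|^2+\mathrm{const}$, where $\tilde n$ is an explicit affine combination of $m$ and $n$, and $p$ is related to $q$ by $p = 1 - 1/(\lambda\alpha)$ with $\alpha = (1-q)/\lambda + 1/a$; the second equation of~\eqref{eq:schrodinger-system} gives the symmetric relation for $q$ in terms of $p,b,\lambda$. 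This is exactly the Gaussian EOT computation of~\cite{janati2020debiased}, redone while keeping the reference measure $\mu\otimes\nu$ (rather than Lebesgue) throughout.

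Next I would assemble $V[\mu]=\sum_k w_k\phi_k[\mu]$. Since all $\nu_k$ share the covariance $aI_d$, the quadratic coefficient $p$ is the same for every $k$, so $V[\mu](x)=\tfrac12 p\sum_k w_k\|x-\tilde x_k\|^2 + \mathrm{const}= \tfrac12 p\,\|x-\sum_k w_k\tilde x_k\|^2+\mathrm{const}$, hence $e^{-V[\mu]/\tau}$ is proportional to the density of $\Nn(\sum_k w_k\tilde x_k,(\tau/p)I_d)$. Imposing self-consistency: the center equation reads $m=\sum_k w_k\tilde x_k$, and since $\tilde x_k$ depends affinely on $m$ and $x_k$ with the same coefficients for all $k$, this collapses to $m=\bar x$; the variance equation reads $b=\tau/p$. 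Substituting $b=\tau/p$ into the two relations from the previous step leaves a single polynomial equation in $b$ — a quadratic after clearing denominators — whose root with $b\ge 0$ is precisely~\eqref{eq:gaussian-variance}. I would confirm the branch by checking the limiting cases $\lambda=\tau=0$ (must give $b=a$) and $\tau=0$ (must give $b=\max\{a-\lambda,0\}$, the known shrinking/collapse of the inner-regularized barycenter). The debiasing identity~\eqref{eq:nonasymp-debias} then follows by setting $b=a$ in~\eqref{eq:gaussian-variance}, solving for $\tau$, and Taylor-expanding in $\lambda$.

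The main obstacle is bookkeeping rather than conceptual: carrying the $\mu\otimes\nu$ reference-measure normalization through the Gaussian integrals without sign or constant errors, and then reducing the coupled relations for $(p,q)$ together with $b=\tau/p$ to a clean quadratic whose admissible root matches~\eqref{eq:gaussian-variance} exactly, including the correct square-root branch. Everything structural — weak existence, uniqueness, the Gibbs form — is inherited from (the $\RR^d$ analogue of) Thm~\ref{thm:characterization}.
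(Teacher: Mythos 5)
Your proposal is correct and takes essentially the same route as the paper's Appendix~\ref{app:gaussians}: a Gaussian ansatz combined with the closed-form Schr\"odinger potentials between isotropic Gaussians, reduced to a quadratic in $b$ whose admissible root is~\eqref{eq:gaussian-variance}. The only organizational difference is that you verify the Gibbs fixed-point~\eqref{eq:characterization} directly, whereas the paper first minimizes the closed-form objective restricted to the Gaussian family (obtaining a first-order condition in $b$) and \emph{then} checks that the resulting Gaussian satisfies the unrestricted Gibbs condition; you also re-derive the quadratic potentials from scratch while the paper imports the formulas of Janati et al.\ and translates them to the $\mu\otimes\nu$ reference measure. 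I checked that your relations $p=1-1/\beta$ with $\beta=1-q+\lambda/a$, together with $b=\tau/p$ and the symmetric relation for $q$, collapse to $b^2+(\lambda-a-2\tau)b+\tau(\tau-\lambda)=0$, whose nonnegative root is indeed~\eqref{eq:gaussian-variance}, so the bookkeeping goes through.
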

We report various special or limit cases of~\eqref{eq:gaussian-variance} in Table~\ref{tab:gaussian}, as well as the case of Sinkhorn divergence barycenters. The latter does not follow from~\eqref{eq:gaussian-variance} and is taken from~\cite{janati2020debiased}, which also covered the special cases $\tau=0$ and $\tau=\lambda$. 

\begin{figure}
\centering
\includegraphics[scale=0.6]{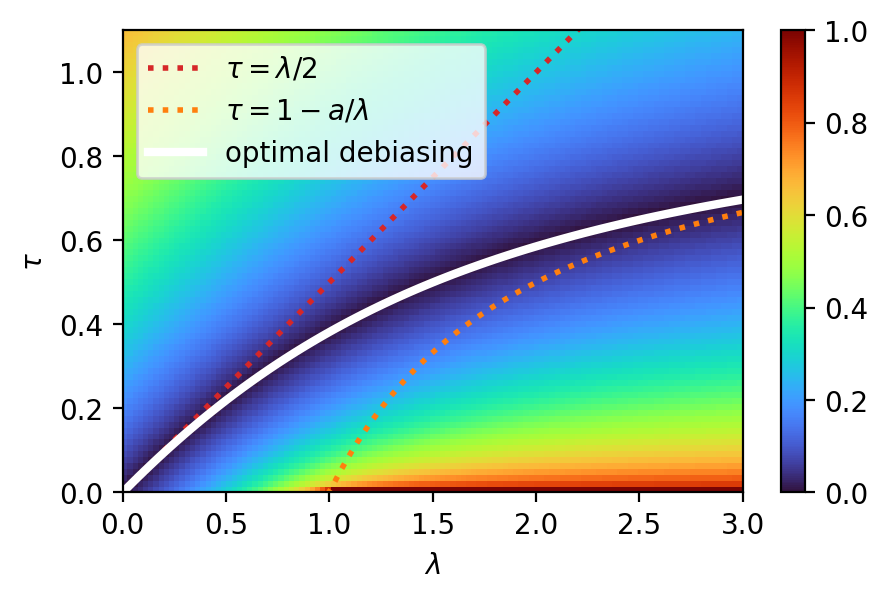}
\caption{Wasserstein distance $d^{-1}W_2(\mu^*_{\lambda,\tau},\mu^*_{0,0})=(\sqrt{b}-\sqrt{a})^2$ between $\mu^*_{\lambda,\tau}$ and $\mu^*_{0,0}$  from~\eqref{eq:gaussian-variance}, with $a=1.0$. The white line shows the best debiasing choice of $\tau^*(\lambda)$ from~\eqref{eq:nonasymp-debias} for which this distance is $0$. Below this line the barycenter is shrinked and above it is blurred. Dashed lines show the small and large $\lambda$ asymptotics of $\tau^*(\lambda)$.}
\end{figure}

\paragraph{Non-asymptotic debiasing} As can be seen from~\eqref{eq:nonasymp-debias}, the choice $\tau=\lambda/2$ gives the optimal debiasing only asymptotically as $\lambda\to 0$. For larger values of $\lambda$, this formula suggests to use a value for $\tau$ that is smaller than $\lambda/2$; and in any case smaller than $1$. Indeed, $\tau^*$ is concave and $\tau^*(\lambda)\leq \min \{\lambda/2,1\}$, which are its tangents at $0$ and $\infty$. Remark that for $\lambda$ large it holds $\tau^*(\lambda) = 1-a/\lambda +O((a/\lambda)^3)$. In practice, one may use the outer regularization value $\tau^*(\lambda)$  as a heuristic even for non-Gaussian measures, replacing $a$ by a notion of average of the variances of the marginals $\nu_k$.

\begin{table}
\centering
\begin{tabular}{l|l|l|c}
& Objective  & Variance (Isotropic Gaussian case) \\
\hline
$\mu_{0}^*$  &  $G_0$ &  $a$ \\
$\mu_{\lambda,0}^*$  &   $G_\lambda$ &  $\max\{a - \lambda, 0 \}$ \\
$\mu_{0,\tau}^*$ &  $G_0 +\tau H$ &   $a+2\tau-\tau^2/a+O(\tau^3/a^2)$\\
$\mu_{\lambda,\mathrm{div}}^*$ &  $G_\lambda -\frac12 T_\lambda(\cdot,\cdot)$  &   $a$\\
$\mu_{\lambda,\tau}^*$ &  $G_\lambda +\tau H$ &   $a+2\tau-\lambda+\tau(\lambda-\tau)/a +O((\lambda^3+\tau^3)/a^2)$\\
$\mu_{\lambda,\lambda/2}^*$ &  $G_\lambda +\frac{\lambda}{2} H$ &   $a+\lambda^2/(4a) +O(\lambda^3/a^2)$\\
 \hline

\end{tabular}
\caption{Variance of $(\lambda,\tau)$-barycenters when the $\nu_k$ are  Gaussian distributions with variance $aI_d$ (Eq.~\eqref{eq:gaussian-variance}).
}\label{tab:gaussian}
\end{table}

\paragraph{Barycenter of Dirac masses} Observe that for $a=0$, Eq.~\eqref{eq:gaussian-variance} gives $b=\tau$ (independently of $\lambda$). This can in fact be directly seen from the optimality conditions. Indeed, the Schr\"odinger system~\eqref{eq:schrodinger-system} gives that for each $k$,
$
\phi_{k}[{\mu^*_{\lambda,\tau}}](x) = \frac12 \Vert x-x_k\Vert^2 -\psi_k
$
 for some $\psi_k\in \RR$. Then the optimality condition~\eqref{eq:characterization} gives
 \begin{align*}
 \mu^*\propto e^{-\frac{\sum_k w_k \phi_{k}(x)}{\tau}} \propto e^{-\frac{\sum_k w_k \Vert x-x_k\Vert^2_2}{2\tau}}\propto e^{-\Vert x- \bar x\Vert^2/(2\tau)}.
 \end{align*}
The inner regularization $\lambda$ has no effect in this case because there is only one transport plan between any $\mu$ and each $\nu_k$ (in particular $\mu_{\lambda,0}^*=\mu^*_{0,0}$ for any $\lambda>0$).

 \subsection{Choice of the reference measure}\label{sec:reference-measure}
 As briefly mentioned in Section~\ref{sec:intro-douba}, there is an alternative formulation of $(\lambda,\tau)$-barycenters in terms of a change of reference measure in the definition of EOT. Let us justify this correspondence here. For a reference measure $\sigma \in \Mm_+(\Xx\times \Xx)$, let 
 \begin{align}\label{eq:EOT-ref}
T_\lambda(\mu,\nu|\sigma)& \coloneqq \min_{\gamma \in \Pi(\mu,\nu)} \int_{\Xx^2} c(x,y) \d\gamma(x,y) + \lambda \KL(\gamma | \sigma),\\
G_{\lambda,\alpha} (\mu)&\coloneqq \sum_{k=1}^K w_k T_{\lambda}(\mu,\nu_k|\mu^\alpha\otimes \nu_k).\label{eq:EOT-ref-2}
\end{align}
In the following discussion, $\mu^\alpha$ denotes $\mu$ itself when $\alpha=1$, the Lebesgue measure $\d x$ when $\alpha=0$ and $(\frac{\d \mu}{\d x})^\alpha\d x$ otherwise. In particular, it holds $G_{\lambda,1}=G_\lambda$. Note that one could equally choose the reference measures $\mu^\alpha\otimes \tilde \nu_k$ in Eq.~\eqref{eq:EOT-ref-2} for any $\tilde \nu_k$ such that $\KL(\nu_k|\tilde \nu_k)$ is finite without changing the barycenter. 

\begin{lemma}\label{lem:change-ref}
It holds
\begin{align*}
G_{\lambda,\alpha}(\mu) = G_\lambda(\mu)+\lambda(1-\alpha)H(\mu)
\end{align*}
and for $\tau=\lambda(1-\alpha)$ the minimizer of this functional is $\mu^*_{\lambda,\tau}$.
\end{lemma}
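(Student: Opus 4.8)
The plan is to unwind the definition of $H(\gamma|\mu^\alpha\otimes\nu_k)$ and separate out the part that depends only on $\mu$. The key algebraic fact is the chain rule for relative entropy: for $\gamma\in\Pi(\mu,\nu)$ absolutely continuous with respect to $\mu\otimes\nu$ (which we may assume, else both sides are $+\infty$), one has
\begin{align*}
H(\gamma\mid \mu^\alpha\otimes\nu) &= \int \log\frac{\d\gamma}{\d(\mu^\alpha\otimes\nu)}\d\gamma
= \int \log\frac{\d\gamma}{\d(\mu\otimes\nu)}\d\gamma + \int \log\frac{\d(\mu\otimes\nu)}{\d(\mu^\alpha\otimes\nu)}\d\gamma.
\end{align*}
The second integrand is $\log\big((\tfrac{\d\mu}{\d x})^{1-\alpha}\big) = (1-\alpha)\log\tfrac{\d\mu}{\d x}$, a function of $x$ alone, so its integral against $\gamma$ equals $(1-\alpha)\int\log\tfrac{\d\mu}{\d x}\d\mu = (1-\alpha)H(\mu)$, which does not depend on the coupling $\gamma$. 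Hence for every admissible $\gamma$,
\begin{align*}
\int c\,\d\gamma + \lambda H(\gamma\mid\mu^\alpha\otimes\nu) = \Big(\int c\,\d\gamma + \lambda H(\gamma\mid\mu\otimes\nu)\Big) + \lambda(1-\alpha)H(\mu),
\end{align*}
and taking the minimum over $\gamma\in\Pi(\mu,\nu)$ gives $T_\lambda(\mu,\nu\mid\mu^\alpha\otimes\nu) = T_\lambda(\mu,\nu) + \lambda(1-\alpha)H(\mu)$. Summing over $k$ with weights $w_k$ (which sum to $1$) yields $G_\lambda(\mu\mid\sigma_\alpha) = G_\lambda(\mu) + \lambda(1-\alpha)H(\mu)$. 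The remark that $\tilde\nu_k$ may replace $\nu_k$ follows the same way, picking up the finite constant $H(\nu_k\mid\tilde\nu_k)$ which is absorbed into the normalization and does not affect the argmin.

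For the second assertion, set $\tau = \lambda(1-\alpha)$. The identity just proved says $G_\lambda(\,\cdot\mid\sigma_\alpha) = G_\lambda + \tau H = F_{\lambda,\tau}$ exactly (as functions on $\Pp(\Xx)$, with the convention that both sides are $+\infty$ off the absolutely continuous measures), so they have the same minimizer, which by Theorem~\ref{thm:characterization} is the unique $\mu^*_{\lambda,\tau}$ when $\lambda,\tau>0$.

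The only point requiring care — and the main (minor) obstacle — is justifying the chain-rule manipulation when some terms may be infinite: one should first restrict attention to $\mu$ absolutely continuous with $H(\mu)<\infty$ and to $\gamma\ll\mu\otimes\nu$ (otherwise $H(\gamma\mid\mu^\alpha\otimes\nu)=+\infty$ iff $H(\gamma\mid\mu\otimes\nu)=+\infty$, since $\mu^\alpha\otimes\nu$ and $\mu\otimes\nu$ are mutually absolutely continuous on the support of $\mu$), and then the decomposition above is a genuine equality of real numbers. For $\alpha\in(0,1)$ one also checks $(\tfrac{\d\mu}{\d x})^\alpha\d x$ is a finite positive measure on the compact set $\Xx$, so $\sigma_\alpha\in\Mm_+(\Xx\times\Xx)$ as required. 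These are routine measure-theoretic verifications and I would state them briefly rather than belabor them.
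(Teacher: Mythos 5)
Your proof is correct and follows the same route as the paper: the paper invokes the identity $H(\gamma|\mu^\alpha\otimes \nu_k) = H(\gamma|\mu\otimes \nu_k) +(1-\alpha)H(\mu)$ by citation, while you simply derive it directly via the chain rule for relative entropy and the fact that $\gamma$ has $x$-marginal $\mu$. The added remarks on infinite-entropy edge cases and the $\tilde\nu_k$ variant are sound but go beyond what the paper records.
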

\begin{proof}
This is a consequence of the following property that can be found by direct computations: \rev{for any $\gamma\in \Pi(\mu,\nu)$ and $\tilde \mu,\tilde \nu \in \Pp(\Xx)$,
\begin{align}\label{eq:change-ref-measure}
\KL(\gamma|\tilde \mu\otimes \tilde \nu) = \KL(\gamma| \mu\otimes \nu) +\KL(\mu|\tilde \mu) + \KL(\nu|\tilde \nu).
\end{align}
See~\cite[Lem.~1.6]{marino2020optimal} for details, in particular for a proper treatment of the case of when one of the terms is $+\infty$.}
The result follows by taking $\tilde \nu=\nu=\nu_k$ and $\tilde \mu = \mu^\alpha$ for $\alpha\in \RR$, which leads, for $\gamma_k\in \Pi(\mu,\nu_k)$, to
\begin{gather*}
\KL(\gamma_k|\mu^\alpha\otimes \nu_k) = \KL(\gamma_k|\mu\otimes \nu_k) +\KL(\mu|\mu^\alpha) = \KL(\gamma|\mu\otimes \nu_k) +(1-\alpha)H(\mu).
\qedhere
\end{gather*}
\end{proof}

This lemma justifies the discussion after Def.~\ref{eq:optim-objective}. The effect of various choices of reference measure can be interpreted in view of the local expansion of Eq.~\eqref{eq:entropic-expansion} and are summarized in Table~\ref{table:reference-measures} (the two first rows correspond to cases discussed in~\cite{janati2020debiased} for Gaussian measures). In the table, we used reference measures which are symmetric in $\mu,\nu$ as is standard, to fix ideas.

To conclude this section, let us mention that it is in general more convenient to use the expression $F_{\lambda,\tau}=G_\lambda +\tau H$ as a sum of a smooth, convex functional and an entropy rather than $G_{\lambda,\alpha}$, although they are equivalent for $\tau=\lambda(1-\alpha)$. Our results in the next two sections rely on this decomposition.

\begin{table}
\centering
\begin{tabular}{c|l|l}
Reference measure $\sigma$ & Corresponding $\tau$ & $1$st order bias term \\
\hline
$\d x \otimes \d x$ &$ \tau=\lambda$ & $\frac{\lambda}{2}H(\mu)$ \\
$ \mu \otimes \nu$ & $\tau=0 $ & $- \frac{\lambda}{2} H(\mu) $  \\
$ \sqrt{\mu} \otimes \sqrt{\nu}$ &$\tau=\lambda/2$& $0$  \\
$\mu^\alpha\otimes \nu^\alpha$  &$ \tau=\lambda(1-\alpha)$& $\lambda(\frac12-\alpha)H(\mu)$  \\
\hline
\end{tabular}
\caption{Effect of the choice of the reference measure and interpretation as a $(\lambda,\tau)$-barycenter. The bias refers to the error $G_{\lambda,\alpha}(\mu)-G_0(\mu)$ when $c(x,y)=\frac12\Vert y-x\Vert^2_2$ (up to constant terms independent from $\mu$). In the last row, one needs $\alpha\leq 1$ so that $\tau\geq 0$.}\label{table:reference-measures}
\end{table}

\section{Stability and statistical estimation}\label{sec:statistics}

\subsection{General stability result}\label{sec:general-stability}
The goal of this subsection is to prove the following general stability result, where the main claim is (ii). In that statement, $W_1$ denotes the $L^1$-Wasserstein distance 
$$
W_1(\mu,\nu) \coloneqq \sup_{\mathrm{Lip}(f)\leq 1} \int_\Xx f(x)\d(\mu-\nu)(x),\quad \forall \mu,\nu \in \Pp(\Xx)
$$
where the supremum runs over the set of measurable functions $f:\Xx\to \RR$ with Lipschitz constant bounded by $1$, and $\dot H^{q}$ denotes the homogeneous Sobolev seminorm of order $q$ defined for $q\geq 0$ and a function $f\in L^1(\RR^d)$ with Fourier transform $\hat f$ by $\Vert f\Vert_{\dot H^q} \coloneqq \Vert \Vert \xi\Vert^{q} \hat f(\xi)\Vert_{L^2(\RR^d)}$ and $\Vert \mu-\nu \Vert_{\dot H^{-q}} \coloneqq \sup_{\Vert f\Vert_{\dot H^q}\leq 1}\int_{\Xx} f(x)\d(\mu-\nu)(x)$. The inhomogeneous Sobolev norms are analogously defined but with $\Vert f\Vert_{H^q} \coloneqq \Vert (1+\Vert \xi\Vert^2)^{q/2} \hat f(\xi)\Vert_{L^2(\RR^d)}$
\begin{assbox}
\begin{theorem}\label{thm:stability}
Let $(\nu_k)_{k=1}^K,(\hat \nu_k)_{k=1}^K \in \Pp(\Xx)^K$ and let $\hat \mu,\mu^*$ be their respective $(\lambda,\tau)$-barycenters with same weights $(w_k)_{k=1}^K$.
\begin{itemize}
\item[(i)] Assume that $c(x,\cdot)$ is $L$-Lipschitz for all $x\in \Xx$. Then for $\tau>0$ and $\lambda\geq 0$, it holds
$$
\KL(\hat \mu|\mu^*) \leq \frac{2L}{\tau}\sum_{k=1}^K w_k W_1(\nu_k,\hat \nu_k).
$$
\item[(ii)] Assume that $c\in \Cc^p(\Xx\times \Xx)$ for $p\geq 2$ and $\lambda,\tau>0$. Then there exists $C$  that depends on $c$ and $\Xx$ only such that
$$
\KL(\hat \mu|\mu^*) \leq \frac{C(1+\lambda^{1-p})}{\tau}\sum_{k=1}^K w_k \Vert \nu_k - \hat \nu_k\Vert_{\dot H^{-p}}.
$$
\end{itemize}
\end{theorem}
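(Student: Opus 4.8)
The plan is to combine two ingredients: the fact that $F_{\lambda,\tau}$ is strongly convex \emph{relative to the entropy}, in the sense that $F_{\lambda,\tau}(\mu)-F_{\lambda,\tau}(\mu^*)\ge \tau H(\mu\,|\,\mu^*)$ for every $\mu$, together with a one-sided Lipschitz-type estimate for $\nu\mapsto T_\lambda(\mu,\nu)$ with $\mu$ held fixed.

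First I would record the entropic strong-convexity bound. Writing $V[\mu^*]=\sum_k w_k\phi_k[\mu^*]$ for the first variation of $G_\lambda$ (Proposition~\ref{prop:Reg_G}), convexity of $G_\lambda$ gives $G_\lambda(\mu)-G_\lambda(\mu^*)\ge \int V[\mu^*]\dd(\mu-\mu^*)$, while the elementary identity $H(\mu)-H(\mu^*)=H(\mu\,|\,\mu^*)+\int\log\big(\tfrac{\d\mu^*}{\d x}\big)\dd(\mu-\mu^*)$ holds. Adding $\tau$ times the latter to the former and invoking the optimality relation $V[\mu^*]+\tau\log\big(\tfrac{\d\mu^*}{\d x}\big)=\mathrm{const}$ from Theorem~\ref{thm:characterization}, the two linear terms cancel against the mean-zero measure $\mu-\mu^*$, leaving $F_{\lambda,\tau}(\mu)-F_{\lambda,\tau}(\mu^*)\ge \tau H(\mu\,|\,\mu^*)$. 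In particular $\tau H(\hat\mu\,|\,\mu^*)\le F_{\lambda,\tau}(\hat\mu)-F_{\lambda,\tau}(\mu^*)$, and since $\hat\mu$ minimizes the analogous functional $\hat F_{\lambda,\tau}$ built from $(\hat\nu_k)_k$ we have $\hat F_{\lambda,\tau}(\hat\mu)-\hat F_{\lambda,\tau}(\mu^*)\le 0$, hence
$$\tau H(\hat\mu\,|\,\mu^*)\ \le\ \sum_{k=1}^K w_k\Big(\big[T_\lambda(\hat\mu,\nu_k)-T_\lambda(\hat\mu,\hat\nu_k)\big]+\big[T_\lambda(\mu^*,\hat\nu_k)-T_\lambda(\mu^*,\nu_k)\big]\Big).$$
It therefore remains to bound $T_\lambda(\mu,\nu)-T_\lambda(\mu,\rho)$ for a fixed first marginal $\mu$ and arbitrary $\nu,\rho\in\Pp(\Xx)$.

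The key observation is that in the semi-dual formulation $T_\lambda(\mu,\nu)=\max_{\phi}\int\phi\dd\mu+\int\psi^{[\mu]}_\phi\dd\nu$, with $\psi^{[\mu]}_\phi(y)=-\lambda\log\int e^{(\phi(x)-c(x,y))/\lambda}\d\mu(x)$, the transformed potential $\psi^{[\mu]}_\phi$ depends only on $(\phi,\mu)$ and not on the second marginal. Taking $\phi=\phi^{\nu}[\mu]$ optimal for $(\mu,\nu)$ -- so that $\psi^{[\mu]}_\phi$ is exactly the Schr\"odinger potential $\psi^{\nu}[\mu]$ of that pair -- and using the same $\phi$ as a suboptimal competitor in the semi-dual for $(\mu,\rho)$ yields $T_\lambda(\mu,\nu)-T_\lambda(\mu,\rho)\le \int \psi^{\nu}[\mu]\dd(\nu-\rho)$; moreover $\nu-\rho$ has zero total mass, so only the regularity of $\psi^{\nu}[\mu]$ modulo additive constants enters. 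For part~(i), a soft-$c$-transform of a cost that is $L$-Lipschitz in $y$ is itself $L$-Lipschitz (a one-line log-sum-exp estimate), so $\int\psi^{\nu}[\mu]\dd(\nu-\rho)\le L\,W_1(\nu,\rho)$ by Kantorovich--Rubinstein; feeding $\rho=\hat\nu_k$ and its symmetric analogue into the displayed inequality gives $\tau H(\hat\mu\,|\,\mu^*)\le 2L\sum_k w_kW_1(\nu_k,\hat\nu_k)$, which is~(i). For part~(ii), Proposition~\ref{prop:propertiesEOT}(iii) applied to the reversed pair gives $\Vert \psi^{\nu}[\mu]\Vert_{\tilde\Cc^{p'}}\le C_{p'}\lambda^{\min\{0,1-p'\}}$ for $p'\le p$, hence $\Vert \psi^{\nu}[\mu]\Vert_{\tilde\Cc^{p}}\le C(1+\lambda^{1-p})$; since $\Xx$ is bounded this controls the homogeneous $\dot H^{p}$ seminorm, so pairing with the mean-zero distribution $\nu-\rho$ gives $\int\psi^{\nu}[\mu]\dd(\nu-\rho)\le C(1+\lambda^{1-p})\Vert \nu-\rho\Vert_{\dot H^{-p}}$, and substituting into the displayed inequality proves~(ii) after absorbing the numerical factor into $C$.

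I do not expect a deep obstacle: the whole content is the semi-dual observation above, which converts the question of stability of optimal potentials into the regularity of one explicit soft-transform, already quantified by Proposition~\ref{prop:propertiesEOT}. The only point requiring genuine care is the functional-analytic bookkeeping in~(ii): verifying that a $\tilde\Cc^{p}$ bound on $\psi^{\nu}[\mu]$ does control the pairing $\int\psi^{\nu}[\mu]\dd(\nu-\rho)$ by $\Vert \nu-\rho\Vert_{\dot H^{-p}}$, which relies on $\Xx$ being bounded (to pass from $L^\infty$ to $L^2$ bounds on top-order derivatives) and on $\nu-\rho$ having zero mass (so that only the homogeneous norm is needed and the constant part of the potential is irrelevant).
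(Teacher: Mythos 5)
Your proposal is correct and follows essentially the same route as the paper: entropic strong convexity (the lower half of the paper's ``entropy sandwich'' lemma), the three-term decomposition using optimality of $\hat\mu$ for the perturbed objective, and a bound on $T_\lambda(\mu,\nu_k)-T_\lambda(\mu,\hat\nu_k)$ controlled by the regularity of the second-marginal Schr\"odinger potential, finished with Kantorovich--Rubinstein in~(i) and the $\tilde\Cc^p/\dot H^{-p}$ duality from Prop.~\ref{prop:propertiesEOT}(iii) in~(ii). The only cosmetic difference is that you obtain the one-sided estimate by plugging the optimal $\phi$ of $(\mu,\nu_k)$ as a suboptimal competitor in the semi-dual for $(\mu,\hat\nu_k)$, whereas the paper phrases the same fact as a two-sided subgradient inequality for the convex map $\nu\mapsto T_\lambda(\mu,\nu)$; both are the same duality manipulation.
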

\end{assbox}
Note that the first bound, adapted from~\cite[Thm.~3.3]{bigot2019penalization} does not require $\lambda>0$ and only exploits the first-order regularity of the cost while with $\lambda>0$, the barycenter is able to exploit the higher order regularity of the cost to obtain stability under norms weaker than $W_1$. Another stability result in the literature is~\cite[Thm.~1]{theveneau2022stability} which proves $\ell_2$-stability of $(\lambda,\lambda)$-barycenters in a discrete setting, under $\ell_\infty$ perturbations of the cost matrix (with an exponential dependency in $\lambda^{-1}$).

Before we start the proof, let us state a lemma that only uses convexity of $G_\lambda$; the lower-bound is classical and the upper-bound is used later in Section~\ref{sec:dynamics}.
\begin{lemma}[Entropy sandwich] Let $\mu\in \Pp(\Xx)$ and consider the \emph{proximal Gibbs distribution} $\nu \propto \exp(-V[\mu]/\tau)$ where $V$ is the first-variation~\eqref{eq:first-variation} of $G_\lambda$. It holds
\begin{align*}
\tau \KL(\mu|\mu_{\lambda,\tau}) \leq F_{\lambda,\tau}(\mu)- F_{\lambda,\tau}(\mu^*_{\lambda,\tau})\leq \tau \KL(\mu|\nu).
\end{align*}
\end{lemma}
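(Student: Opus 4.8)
The plan is to exploit the $\tau$-strong convexity of $F_{\lambda,\tau}$ (Prop.~\ref{prop:strong-convexity}) together with the fact that $G_\lambda$ is convex with first-variation $V[\mu]$ (Prop.~\ref{prop:Reg_G}), so that the whole difference $F_{\lambda,\tau}(\mu) - F_{\lambda,\tau}(\mu^*_{\lambda,\tau})$ is squeezed between two Bregman-type divergences of the entropy, both of which turn out to be relative entropies. I would set $\mu^* := \mu^*_{\lambda,\tau}$ throughout.

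For the \textbf{lower bound}, I would use that $F_{\lambda,\tau}$ is $\tau$-strongly convex for the total variation norm (Prop.~\ref{prop:strong-convexity}) and that $\mu^*$ is its minimizer. More precisely, $F_{\lambda,\tau} = G_\lambda + \tau H$ with $G_\lambda$ convex, so $G_\lambda(\mu) \geq G_\lambda(\mu^*) + \int V[\mu^*]\,\d(\mu-\mu^*)$, and the first-order optimality condition~\eqref{eq:opt-gibbs} gives $V[\mu^*] = C - \tau\log\frac{\d\mu^*}{\d x}$ $\mu^*$-a.e., so $\int V[\mu^*]\,\d(\mu-\mu^*) = -\tau\int \log\frac{\d\mu^*}{\d x}\,\d(\mu-\mu^*)$ (the constant $C$ integrates to zero against the signed measure $\mu - \mu^*$, and one must note $\mu \ll \mu^*$ or else $F_{\lambda,\tau}(\mu) = +\infty$ and the inequality is trivial). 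Combining this with $\tau(H(\mu) - H(\mu^*))$ and rearranging, the cross terms assemble into $\tau\int \log\frac{\d\mu}{\d\mu^*}\,\d\mu = \tau H(\mu|\mu^*)$, giving $F_{\lambda,\tau}(\mu) - F_{\lambda,\tau}(\mu^*) \geq \tau H(\mu|\mu^*)$.

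For the \textbf{upper bound}, I would use convexity of $G_\lambda$ in the opposite direction via the tangent inequality at $\mu$ itself: $G_\lambda(\mu^*) \geq G_\lambda(\mu) + \int V[\mu]\,\d(\mu^*-\mu)$, hence $G_\lambda(\mu) - G_\lambda(\mu^*) \leq \int V[\mu]\,\d(\mu-\mu^*)$. Then $F_{\lambda,\tau}(\mu) - F_{\lambda,\tau}(\mu^*) \leq \int V[\mu]\,\d(\mu-\mu^*) + \tau(H(\mu) - H(\mu^*))$. Now substitute $V[\mu] = -\tau\log\frac{\d\nu}{\d x} + \text{const}$, where $\nu \propto e^{-V[\mu]/\tau}$, so that (the constant again dropping against $\mu - \mu^*$) the right-hand side becomes $-\tau\int\log\frac{\d\nu}{\d x}\,\d(\mu-\mu^*) + \tau\int\log\frac{\d\mu}{\d x}\,\d\mu - \tau\int\log\frac{\d\mu^*}{\d x}\,\d\mu^*$. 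The terms involving $\mu^*$ recombine: $\tau\int\log\frac{\d\nu}{\d\mu^*}\,\d\mu^* = -\tau H(\mu^*|\nu) \leq 0$, so they may be dropped; the terms involving $\mu$ recombine into $\tau\int\log\frac{\d\mu}{\d\nu}\,\d\mu = \tau H(\mu|\nu)$, yielding the claimed $F_{\lambda,\tau}(\mu) - F_{\lambda,\tau}(\mu^*) \leq \tau H(\mu|\nu)$.

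The main subtlety to handle carefully is the bookkeeping of normalizing constants and the absolute-continuity caveats: one should first dispose of the case where $\mu$ is not absolutely continuous or $\mu \not\ll \mu^*$ (then $F_{\lambda,\tau}(\mu) = +\infty$ and both inequalities hold vacuously, or one side is $+\infty$), and then in the a.c.\ case verify that each $\log(\d\cdot/\d x)$ is $\mu$- or $\mu^*$-integrable (this follows since $H(\mu), H(\mu^*) < \infty$ and $V[\mu], V[\mu^*]$ are bounded continuous by Prop.~\ref{prop:Reg_G}) so that all the manipulations with signed-measure integrals are legitimate and the additive constants genuinely cancel. No real obstacle beyond this careful accounting; the essential input is just convexity of $G_\lambda$ plus the Gibbs form of $\mu^*$.
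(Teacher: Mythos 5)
Your proof is correct and follows essentially the same route as the paper: convexity of $G_\lambda$ yields the two tangent inequalities at $\mu^*$ and at $\mu$, and substituting the Gibbs forms $\mu^* \propto e^{-V[\mu^*]/\tau}$ and $\nu \propto e^{-V[\mu]/\tau}$ turns the Bregman gaps into $\tau H(\mu|\mu^*)$ and $\tau H(\mu|\nu) - \tau H(\mu^*|\nu)$, after which the nonnegative term $\tau H(\mu^*|\nu)$ is dropped. Your extra attention to the $F_{\lambda,\tau}(\mu)=+\infty$ edge case and to integrability of the log-densities is sound, though the paper leaves it implicit.
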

\begin{proof}
For brevity, let us write $F=F_{\lambda,\tau}$ and $\mu^*=\mu^*_{\lambda,\tau}$. By convexity of $G_\lambda$, we have
\begin{align*}
G_\lambda(\mu^*)+\int V[\mu^*]\d(\mu-\mu^*)\leq G_\lambda(\mu)\leq G_\lambda(\mu^*)+\int V[\mu]\d(\mu-\mu^*).
\end{align*}
Adding $\tau H(\mu)-G_\lambda(\mu^*)-\tau H(\mu^*)$ we get
\begin{gather*}
\int V[\mu^*]\d(\mu-\mu^*)+\tau H(\mu)-\tau H(\mu^*)\leq F(\mu)-F(\mu^*)\leq \int V[\mu]\d(\mu-\mu^*)+\tau H(\mu)-\tau H(\mu^*)
\intertext{which is equivalent to}
\tau \KL(\mu|\mu^*)-\tau \KL(\mu^*|\mu^*)\leq F(\mu)-F(\mu^*)\leq \tau \KL(\mu|\nu)-\tau \KL(\mu^*|\nu)
\end{gather*}
using $\mu^*\propto e^{-V[\mu^*]/\tau}$ (Thm.~\ref{thm:characterization}). The claim follows from $\KL(\mu^*|\mu^*)=0$ and $\KL(\mu^*|\nu)\geq 0$.
\end{proof}

\begin{proof}[Proof of Thm.~\ref{thm:stability}]
Let us start with an application of the previous lemma. For conciseness, let us drop the indices $\lambda,\tau$ and put hats on quantities defined using the measures $(\hat \nu_k)_{k=1}^K$ in place of $(\nu_k)_{k=1}^K$:
\begin{align*}
\tau \KL(\hat \mu  |\mu^* ) &\leq F (\hat \mu ) - F (\mu^* )\\
& = [F (\hat \mu ) - \hat F (\hat \mu )] + [\hat F (\hat\mu ) - \hat F (\mu^* )] + [\hat F (\mu ^*) - F (\mu^*)]\\
&\leq G_\lambda (\hat \mu ) - \hat G_\lambda (\hat \mu ) + 0 + \hat G_\lambda (\mu^* ) - G_\lambda (\mu^* )  \\
& \leq \sum_{k=1}^K w_k \big(T_\lambda(\hat \mu,\nu_k)-T_\lambda(\hat \mu,\hat \nu_k)\big) + \sum_{k=1}^K w_k\big(T_\lambda(\mu^*,\hat \nu_k)-T_\lambda(\mu^*,\nu_k)\big)\\
& \leq 2\sum_{k=1}^K w_k \sup_{\mu\in \Pp(\Xx)} \vert  T_\lambda(\mu, \hat \nu_k) - T_\lambda(\mu, \nu_k) \vert .
\end{align*}
Now by convexity of $T_\lambda$ in $\nu_k$, we have (denoting $\psi_{\mu,\nu}$ the Schr\"odinger potential from $\nu$ to $\mu$):
$$
\int \psi_{\mu,\hat \nu_k} \d [\nu-\hat \nu] \leq T_\lambda(\mu,\nu_k)-T_\lambda(\mu,\hat \nu_k) \leq \int \psi_{\mu,\nu_k} \d [\nu-\hat \nu].
$$
Given any norm $\Vert \cdot \Vert$ on the set of continuous functions defined up to constants, denoting $\Vert \sigma \Vert_*=\sup_{\Vert f\Vert\leq 1} \int f\d\sigma$ the dual norm on the space of signed measures with $0$ total mass, it follows 
$$
\tau \KL(\hat \mu|\mu^*) \leq 2 C \sum_{k=1}^K w_k \Vert \nu_k - \hat \nu_k\Vert_*
$$
where $C=\sup_{\mu,\nu\in \Pp(\Xx)}\Vert \psi_{\mu,\nu}\Vert$. Let us now consider the two claims separately.

\emph{(i)} If $c(x,\cdot)$ is $L$-Lipschitz for all $x\in \Xx$, then it can be seen from the Schr\"odinger system~\eqref{eq:schrodinger-system} that $\psi_{\mu,\nu}$ is also $L$-Lipschitz. Taking the Lipschitz semi-norm $\Vert f\Vert = \sup_{x\neq y}\frac{\vert f(x)-f(y)\vert}{\vert y-x\vert}$, the dual of which is the Kantorovich-Rubinstein norm $\Vert \nu_k -\hat \nu_k \Vert_* = W_1(\nu_k,\hat \nu_k)$ gives the first claim.

\emph{(ii)}  If $c\in \tilde \Cc^p$ then by differentiating the Schr\"odinger system $p$ times it follows that $\psi_{\mu,\nu}$ is $p$ times differentiable with $\Vert \psi_{\mu,\nu}\Vert_{\tilde \Cc^p} \leq C (1+\lambda^{1-p})$ where $C>0$ is a constant depending on the cost (Prop.~\ref{prop:propertiesEOT}-(iii)). Since $\Xx$ is assumed compact, it follows that the homogeneous Sobolev seminorm $\Vert \psi_{\mu,\nu}\Vert_{\dot H^p}$  admits the same bound (up to a constant depending on the diameter of $\Xx$). The conclusion follows from the fact that the seminorms $\dot H^p$ and $\dot H^{-p}$ are dual to each other.
\end{proof}

\subsection{Estimation from independent samples}
In this section, we consider the statistical properties $(\lambda,\tau)$-barycenters. Assume that we dispose of $n$ independent samples $x^{(k)}_1,\dots, x^{(k)}_n$ from each of the marginals $\nu_k$. Let $\hat \mu_\lambda\in \Pp_2(\Xx)$ be the plug-in estimator of the barycenter, defined as the $(\lambda,\tau)$-barycenter of the empirical marginals $\hat \nu_k = \frac1n \sum_{i=1}^n \delta_{x_i^{(k)}}$.

\begin{corollary}
Let $\tau,\lambda>0$, define $d'=2\lfloor d/2\rfloor$ and assume that $c\in \Cc^{1+d'/2}(\Xx\times \Xx)$. Let $\hat \mu_{\lambda,\tau}$ be the empirical barycenter and $\mu^*_{\lambda,\tau}$ the population barycenter. Then there is $C>0$ independent of $(\nu_k)_k$ such that 
\begin{align*}
\E [\KL(\hat \mu_{\lambda,\tau}|\mu^*_{\lambda,\tau})]\leq C \tau^{-1}(1+\lambda^{-d'/2})n^{-1/2}.
\end{align*}
\end{corollary}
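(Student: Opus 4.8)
The plan is to read this off from the general stability estimate of Theorem~\ref{thm:stability}(ii) together with a standard bound on the fluctuations of an empirical measure in a negative Sobolev norm. First, the empirical barycenter $\hat\mu_{\lambda,\tau}$ is well defined: by Theorem~\ref{thm:characterization} a unique $(\lambda,\tau)$-barycenter exists for \emph{any} marginals, discrete ones included, because the reference measure $\mu\otimes\nu$ keeps $T_\lambda$ finite. Set $p\coloneqq 1+d'/2=1+\lfloor d/2\rfloor$, so that $c\in\Cc^p(\Xx\times\Xx)$ by hypothesis and, for $d\ge 2$, $p\ge 2$; thus Theorem~\ref{thm:stability}(ii) applies and yields the \emph{deterministic} inequality
\[
H(\hat\mu_{\lambda,\tau}\,|\,\mu^*_{\lambda,\tau})\;\leq\;\frac{C\,(1+\lambda^{1-p})}{\tau}\sum_{k=1}^K w_k\,\Vert\nu_k-\hat\nu_k\Vert_{\dot H^{-p}},
\]
with $C$ depending only on $c$ and $\Xx$. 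Since $1-p=-d'/2$ and $\sum_k w_k=1$, taking expectations reduces the claim to the bound $\E\Vert\nu_k-\hat\nu_k\Vert_{\dot H^{-p}}\le C' n^{-1/2}$ with $C'$ depending only on $\Xx$ and $p$ (hence on $d$), uniformly over the law $\nu_k$.

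For that bound I would use a spectral argument. After an affine change of variables we may assume $\Xx\subset[0,2\pi]^d$ and embed signed measures on $\Xx$ into distributions on the torus $\TT^d$, whose Fourier characters $e_j(x)=e^{\mathrm{i}j\cdot x}$, $j\in\ZZ^d$, form an orthonormal basis of $L^2(\TT^d)$ that is \emph{uniformly} bounded, $\Vert e_j\Vert_\infty=1$. Writing $\hat\sigma(j)=\int e^{-\mathrm{i}j\cdot x}\dd\sigma(x)$ and using that $\hat\nu_k-\nu_k$ averages $n$ i.i.d.\ centered fluctuations,
\[
\E\bigl|\widehat{(\nu_k-\hat\nu_k)}(j)\bigr|^2=\tfrac1n\,\Var_{X\sim\nu_k}\!\bigl(e^{-\mathrm{i}j\cdot X}\bigr)\le\tfrac1n .
\]
Hence, by Parseval and Fubini, $\E\Vert\nu_k-\hat\nu_k\Vert_{\dot H^{-p}(\TT^d)}^2=\sum_{j\neq 0}|j|^{-2p}\,\E|\widehat{(\nu_k-\hat\nu_k)}(j)|^2\le n^{-1}\sum_{j\neq 0}|j|^{-2p}$, and the last series converges because $2p>d$, i.e.\ $p>d/2$, which indeed holds for $p=1+\lfloor d/2\rfloor$. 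Jensen's inequality gives $\E\Vert\nu_k-\hat\nu_k\Vert_{\dot H^{-p}(\TT^d)}\le C''n^{-1/2}$. Finally one transfers this to the norm on $\Xx$: as $\Xx$ is compact convex with nonempty interior it has Lipschitz boundary, so there is a bounded Sobolev extension operator; testing $\nu_k-\hat\nu_k$ against $f\in\dot H^p(\Xx)$ (and using that $\nu_k-\hat\nu_k$ has zero mass, so only the mean-zero part of $f$ matters, which after Poincaré has controlled full $H^p$-norm) gives $\Vert\nu_k-\hat\nu_k\Vert_{\dot H^{-p}(\Xx)}\lesssim\Vert\nu_k-\hat\nu_k\Vert_{\dot H^{-p}(\TT^d)}$, with constant depending only on $\Xx$ and $p$.

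Combining the two displays yields $\E[H(\hat\mu_{\lambda,\tau}|\mu^*_{\lambda,\tau})]\le C\tau^{-1}(1+\lambda^{-d'/2})n^{-1/2}$ with a constant independent of $(\nu_k)_k$, since the variance bound above is uniform in $\nu_k$ and the constant of Theorem~\ref{thm:stability}(ii) depends on $c$ and $\Xx$ only. For $d=1$ one has $d'=0$ and the claimed bound carries no $\lambda$-dependence; this case is obtained instead from Theorem~\ref{thm:stability}(i) together with the classical one-dimensional rate $\E W_1(\nu_k,\hat\nu_k)\le \tfrac12\diam(\Xx)\,n^{-1/2}$ (a direct consequence of $W_1=\int|F_{\nu_k}-F_{\hat\nu_k}|$ and the pointwise variance of the empirical c.d.f.). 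The only genuinely delicate ingredient is the negative-Sobolev sample-complexity estimate, and with it the calibration of the smoothness order: $\sum_{j\neq0}|j|^{-2p}$ converges exactly when $p>d/2$, which is precisely why $d'=2\lfloor d/2\rfloor$ (equivalently the requirement $c\in\Cc^{1+d'/2}$) is the natural choice; everything else is bookkeeping, including the transfer between the torus and $\Xx$.
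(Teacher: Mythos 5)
Your proof reaches the same reduction as the paper — apply Theorem~\ref{thm:stability}(ii) with $p=1+\lfloor d/2\rfloor$ to obtain the deterministic bound $H(\hat\mu_{\lambda,\tau}\,|\,\mu^*_{\lambda,\tau})\le C\tau^{-1}(1+\lambda^{1-p})\sum_k w_k\Vert\nu_k-\hat\nu_k\Vert_{\dot H^{-p}}$, then take expectations — but the route to the key estimate $\E\Vert\hat\nu_k-\nu_k\Vert_{H^{-p}}\le C n^{-1/2}$ is genuinely different. The paper uses the fact that $H^{p}$ is a reproducing-kernel Hilbert space when $p>d/2$ and cites standard Rademacher-complexity bounds for RKHS balls (as in the EOT sample-complexity analysis of Genevay et al.); you instead give a self-contained Fourier computation on the torus: $\E\bigl|\widehat{(\nu_k-\hat\nu_k)}(j)\bigr|^2\le 1/n$ for every $j$, Parseval, and summability of $\sum_{j\ne 0}|j|^{-2p}$ when $2p>d$, followed by Jensen and an extension-operator transfer back to $\Xx$. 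The numerology $p>d/2$ is of course the same condition that makes $H^p$ an RKHS, so the two arguments are tightly parallel; yours is more elementary and makes the dimensional dependence of the constant more transparent, while the RKHS route is shorter and hides the domain-transfer step inside the choice of kernel. Two small remarks. First, the extension/Poincaré transfer from $\dot H^{-p}(\TT^d)$ to $\dot H^{-p}(\Xx)$ is only sketched; it is standard but you should be explicit that $\nu_k-\hat\nu_k$ has zero mass so only the mean-zero part of the test function matters, which is what lets you use Poincaré before extending. Second, your separate treatment of $d=1$ is a genuine improvement: there $p=1$, so the hypothesis $p\ge 2$ of Theorem~\ref{thm:stability}(ii) fails and the paper's proof does not formally cover this case; falling back on Theorem~\ref{thm:stability}(i) together with $\E\,W_1(\nu_k,\hat\nu_k)\lesssim n^{-1/2}$ on an interval is the right fix.
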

\begin{proof}
We follow a strategy similar to the one used for the sample complexity of EOT~\cite{genevay2019sample}. In Thm.~\ref{thm:stability}, it is possible to replace the homogeneous Sobolev norm $\dot H^{-p}$ by the inhomogeneous norm $H^{-p}$. For $p=1+d'/2$, it is known that $H^{p}$ is a Reproducible Kernel Hilbert space norm, and by standard empirical process theory results~\cite{bartlett2002rademacher} one has 
\begin{equation*}
\E \Vert \hat \nu_k -\nu_k\Vert_{H^{-p}} \leq C n^{-1/2}. \qedhere
\end{equation*}
\end{proof}

\paragraph{Related work} To the best of our knowledge, this is the first estimation rate for an OT-like barycenter that does not suffer from the curse of dimensionality. The rate of estimation for $(0,\tau)$-barycenter was studied in~\cite{bigot2020statistical}, where the rate is cursed by the dimension because then the bound of Thm.~\ref{thm:stability}-(i) involves the quantity $W_1(\nu_k,\hat \nu_k)$ which is of order $n^{-1/d}$ for $d>2$~\cite{fournier2015rate}. In the same paper, they also studied $(\lambda,\lambda)$-barycenter but on a discrete space. The estimation of barycenters on discrete spaces is a rich topic but with a very different behavior~\cite{heinemann2023kantorovich, heinemann2022randomized}. See~\cite{panaretos2019statistical} for an introduction to statistical aspects of OT, including barycenters. We also note that there is a line of works (see e.g.~\cite{ahidar2020convergence, le2022fast}) that studies the different problem of estimation of barycenters given marginals $(\nu_k)_k$ sampled from a distribution in $\Pp(\Pp(\Xx))$ (as in~\eqref{eq:p-of-p}).

\section{Optimization with Noisy Particle Gradient Descent}\label{sec:dynamics}
We consider the computation of $(\lambda,\tau)$-barycenters when the marginals $(\nu_k)_k$ are discrete with $n$ atoms each. In this case, the size of the problem is given by $n$ (the number of atoms), $K$ (the number of marginals) and $d$ (the ambient dimension). 

For small scale problems where one of these quantities is small, several efficient algorithms exist. For instance when $n$ or $K$ is small, one can directly solve a linear program of size $O(n^K)$ -- the multimarginal formulation~\cite{agueh2011barycenters} -- to compute the $(0,0)$-barycenter. When $d$ is small, there exists efficient exact methods~\cite{altschuler2021wasserstein}. In that case, an alternative is to discretize the space, and use convex optimization algorithms to solve Eq.~\eqref{eq:optim-objective}. This includes approaches based on linear programming~\cite{anderes2016discrete, ge2019interior},  entropic regularization~\cite{cuturi2014fast, benamou2015iterative} or decentralized and randomized algorithms~\cite{ dvurechenskii2018decentralize,staib2017parallel,heinemann2022randomized}, see~\cite{peyre2019computational} for a review. This approach also applies for $(\lambda,\tau)$-barycenters (we compute 1D barycenters with this method in Section~\ref{sec:numerics}).

In this section, we focus on large scale problems ($n,d,K\gg 1$) where these discrete approaches are intractable. A stream of recent works proposed methods based on neural networks~\cite{korotin2020continuous, cohen2020estimating, li2020continuous, fan2020scalable}. These methods come with the advantages (useful statistical prior, reasonable iteration complexity) and the drawbacks (lack of optimization guarantees) of neural networks. Particle-based methods, which are closer in spirit to what follows, have been proposed such as fixed-point methods akin to Lloyd's algorithm~\cite{alvarez2016fixed, claici2018stochastic,von2022simple,backhoff2022stochastic} or~\cite[Chap.~5]{panaretos2020invitation} and a particle gradient method~\cite{daaloul2021sampling} for Wasserstein barycenters. Note that it is shown in~\cite{altschuler2022wasserstein} that Wasserstein barycenters are NP-hard to compute in large dimension. A Franck-Wolfe algorithm~\cite{luise2019sinkhorn} was proposed for Sinkhorn divergence barycenters.

In what follows, we propose a grid-free numerical method which is particularly well-suited to the structure of the problem of Eq.~\eqref{eq:optim-objective}, called Noisy Particle Gradient Descent (NPGD). We defer a detailed complexity analysis of this method to future works, and limit ourselves to an introduction of the algorithm with its exponential guarantee in the mean-field limit, which is an application of~\cite{chizat2022mean,nitanda2022convex}.

\subsection{Noisy Particle Gradient Descent} 
We parameterize the unknown measure as a mixture of $m\in \NN^*$ particles $\hat \mu=\frac1m \sum_{i=1}^m \delta_{X_j}$. Let $\mathbf{X} = (X_1,\dots,X_m)\in (\RR^d)^m$ encode the position of all particles and consider the function
\begin{equation}\label{eq:Gm}
G^{(m)}_\lambda(\mathbf{X}) \coloneqq G_\lambda\Big(\frac1m \sum_{i=1}^m \delta_{X_j}\Big).
\end{equation}
The NPGD algorithm we consider is simply a noisy gradient descent on $G^{(m)}_\lambda$, with an initialization sampled from some $\mu_0\in \Pp(\Xx)$. It is defined, for $\ell\in \NN$, as 
\begin{equation}\label{eq:NPGD}
\mathbf{X}[\ell+1] = \mathsf{P}_\Xx\big(\mathbf{X}[\ell] - m\eta \nabla G^{(m)}_\lambda(\mathbf{X}[\ell]) + \sqrt{2\eta\tau}   \mathbf{Z}[\ell]\big),\quad \mathbf{X}[0] \sim \mu_0^{\otimes m}
\end{equation}
where $\tau>0$ is the outer-regularization strength, $\eta>0$ is the step-size, $ \mathbf{Z}[1],  \mathbf{Z}[2],\dots$ are i.i.d.~standard Gaussian vectors and $ \mathsf{P}_\Xx$ is the Euclidean projection on $\Xx$. In the small step-size limit $\eta\to 0$ and setting $t=k\eta$, NPGD leads to a system of SDEs coupled via the empirical distribution of particles $\hat \mu_t$:
\begin{align}\label{eq:SDE}
\left\{
\begin{aligned}
\d X_i(t) &=  - \nabla V[\hat \mu_t](X_i(t))\d t + \sqrt{2\tau}   \d B_{t,i}  + \d\Phi_{t,i},\quad X_i(0) \sim \mu_0 \\
\hat \mu_t &=\frac1m \sum_{i=1}^m \delta_{X_i(t)}
\end{aligned}
\right.
\end{align}
where $(B_{t,i})_{t\geq 0}$ are independent Brownian motions in $\RR^d$, $\d\Phi_{t,i}$ is a boundary reflection (in the sense of Skorokhod problem) and $V[\mu]\in \Cc^1(\Xx)$ is the \emph{first-variation} of $G_\lambda$ at $\mu$ (see Definition~\ref{eq:first-variation}). The latter satisfies $\nabla V[\hat \mu](X_i)=m\nabla_{X_i} G^{(m)}_\lambda(\mathbf{X})$, hence~\eqref{eq:NPGD} is just the Euler-Maruyama discretization of~\eqref{eq:SDE} below. \rev{This expression also shows that, to compute $\nabla G^{(m)}_\lambda$ at each iteration of Eq.~\eqref{eq:NPGD}, one needs to compute the Schr\"odinger potentials between $\hat \mu$ and each $\nu_k$ (since they appear in the expression of $\nabla V[\hat \mu]$), which in turns requires to solve $K$ different EOT problems at each iteration.}

\subsection{Mean-Field Langevin dynamics} In the many-particle $m\to \infty$ limit, it can be shown that the particles behave like independent sample paths from the nonlinear SDE of McKean-Vlasov type:
\begin{equation}
\left\{
\begin{aligned}
\d X_t &=-\nabla V[\mu_t] (X(t))\d t + \sqrt{2\tau}\d B_t +\d\Phi_t, \quad X_0\sim \mu_0\\
\mu_t &=\mathrm{Law}(X_t)
\end{aligned}
\right.
\end{equation}
where $(B_t)_{t\geq 0}$ is a Brownian motion and $(\Phi_t)_{t\geq 0}$ a boundary reflection.
Moreover, the distribution $(\mu_t)_{t\geq 0}$ of particles solves the evolution equation
\begin{equation}\label{eq:PDE}
\partial_t \mu_t = \nabla \cdot \big(\mu_t \nabla V_\lambda[\mu_t] \big) +  \tau \Delta \mu_t
\end{equation}
starting from $\mu_0\in \Pp(\Xx)$ where $\nabla \cdot$ stands for the divergence operator. By solution of~\eqref{eq:PDE} here, we mean a curve $(\mu_t)_{t\geq 0}$ starting from $\mu_0$ that is absolutely continuous in Wasserstein space and satisfies Eq.~\eqref{eq:PDE} in the sense of distributions  with no-flux boundary conditions. 

This drift-diffusion equation is an instance of \emph{Mean-Field Langevin} dynamics, a class of drift-diffusion dynamics  studied in~\cite{mei2018mean, hu2019mean, nitanda2022convex,chizat2022mean} for convex $G_\lambda$. It can be interpreted as the gradient flow of the functional $F_{\lambda,\tau}$ of Eq.~\eqref{eq:optim-objective} under the $W_2$ Wasserstein metric. 

In our theoretical analysis, we focus on the analysis of the mean-field limit~\eqref{eq:PDE}. For quantitative convergence results in the many-particle $m\to \infty$ and small step-size $\eta\to 0$ limits, one can refer to the classical work~\cite{sznitman1991topics}. The particular case of reflecting boundary conditions has been treated in~\cite{javanmard2020analysis}, following earlier works on the analysis of SDEs with reflection~\cite{tanaka1979stochastic, lions1984stochastic}.

\subsection{Exponential convergence}
It has been shown independently in~\cite{nitanda2022convex} and~\cite{chizat2022mean} that if the function $G_\lambda$ is convex, regular enough and that a certain family of log-Sobolev inequalities holds, then dynamics of the form Eq.~\eqref{eq:PDE} converge at an exponential rate  to the unique minimizer of $F_{\lambda}$. Let us apply this result in our context, where this leads to a dynamics that converges to the $(\lambda,\tau)$-barycenter.
\begin{assbox}
\begin{theorem}\label{thm:global-convergence}
Assume that $c\in \Cc^2(\Xx)$ (where, we recall, $\Xx$ is compact) and let $\lambda,\tau>0$. Then there exists a unique solution to~\eqref{eq:PDE}. Moreover, there exists $\rho_\tau>0$ such that if $\mu_0\in \Pp(\Xx)$ is such that $F_{\lambda,\tau}(\mu_0)<\infty$, then it holds 
\begin{align*}
\tau \KL(\mu_t|\mu^*_{\lambda,\tau})\leq F_{\lambda,\tau}(\mu_t) - F_{\lambda,\tau}(\mu^*_{\lambda,\tau})\leq e^{-\rho_\tau t} \big( F_{\lambda,\tau}(\mu_0) - F_{\lambda,\tau}(\mu^*_{\lambda,\tau}) \big).
\end{align*}
\end{theorem}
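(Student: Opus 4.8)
The plan is to verify the hypotheses of the abstract Mean-Field Langevin convergence theorems of \cite{nitanda2022convex, chizat2022mean} for the functional $F_{\lambda,\tau} = G_\lambda + \tau H$ and then invoke them. The first observation is that the PDE \eqref{eq:PDE} is precisely the Wasserstein gradient flow of $F_{\lambda,\tau}$, with $G_\lambda$ playing the role of the convex ``interaction'' part and $\tau H$ the entropic part. The ingredients needed from those works are: (a) $G_\lambda$ is convex on $\Pp(\Xx)$ (Prop.~\ref{prop:Reg_G}); (b) $G_\lambda$ admits a first variation $V[\mu]$ that is bounded in $\Cc^1$ uniformly in $\mu$ and Lipschitz in $\mu$ with respect to $W_2$ valued in $\Cc^1$ — both are exactly the content of Prop.~\ref{prop:Reg_G} (using $p\geq 2$ so that $V[\mu]\in \tilde\Cc^{p-1}$ with at least $\Cc^1$ control and the estimate $\|V[\mu]\|_{\tilde\Cc^{p'}}\leq C_{p'}\lambda^{\min\{0,1-p'\}}$); and (c) a uniform log-Sobolev inequality for the family of ``proximal Gibbs measures'' $\hat\mu_\mu \propto e^{-V[\mu]/\tau}$. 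For (c), the key point is that since $\Xx$ is compact and $V[\mu]$ ranges over a set that is bounded in sup-norm uniformly in $\mu$ (again Prop.~\ref{prop:Reg_G}, with the $\lambda$-dependent constant fixed once $\lambda>0$ is fixed), the densities $e^{-V[\mu]/\tau}$ are uniformly bounded above and below by positive constants; by the Holley--Stroock perturbation argument applied to the uniform LSI of the normalized Lebesgue measure on the compact convex set $\Xx$, one gets a log-Sobolev constant $\rho_\lambda>0$ that is uniform over $\mu\in\Pp(\Xx)$. This yields the exponential decay of $F_{\lambda,\tau}(\mu_t)-F_{\lambda,\tau}(\mu^*_{\lambda,\tau})$ at rate $\rho_\lambda$.

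For the left-hand inequality $\frac{\lambda}{2}H(\mu_t|\mu^*_{\lambda,\tau})\leq F_{\lambda,\tau}(\mu_t)-F_{\lambda,\tau}(\mu^*_{\lambda,\tau})$, I would apply the lower bound of the Entropy Sandwich Lemma, which gives $\tau H(\mu|\mu^*_{\lambda,\tau})\leq F_{\lambda,\tau}(\mu)-F_{\lambda,\tau}(\mu^*_{\lambda,\tau})$ — note this actually gives the constant $\tau$ rather than $\lambda/2$; presumably the statement is conservatively written, or one simply uses whichever of the two constants one prefers, and in any case $\tau H(\mu|\mu^*)\geq \frac{\lambda}{2}H(\mu|\mu^*)$ is false in general, so I would double-check: the clean inequality from the lemma is with $\tau$, and I would state it that way (or note that the displayed $\lambda/2$ should be read as the LSI-related constant). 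Either way, this half is immediate from the already-proved lemma and requires no new work.

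Existence and uniqueness of the solution to \eqref{eq:PDE} is the remaining piece. Uniqueness follows from the convexity/displacement-convexity structure: $F_{\lambda,\tau}$ is $\tau$-strongly convex in the classical sense (Prop.~\ref{prop:strong-convexity}) and, more to the point for gradient-flow uniqueness, the drift $-\nabla V[\mu]$ is bounded and Lipschitz in $x$ uniformly in $\mu$ (from the $\Cc^2$ bound on $V$, hence $\Cc^1$ bound on $\nabla V$, valid since $c\in\Cc^2$) and Lipschitz in $\mu$ for $W_2$; this is the standard setting for well-posedness of the McKean--Vlasov SDE with reflecting boundary on a compact convex domain, for which I would cite \cite{sznitman1991topics, javanmard2020analysis} and the reflection-SDE literature \cite{tanaka1979stochastic, lions1984stochastic}. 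Existence of a Wasserstein-absolutely-continuous curve solving \eqref{eq:PDE} in the distributional sense with no-flux conditions then follows from the equivalence between the nonlinear SDE and the PDE.

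The main obstacle I anticipate is establishing the \emph{uniform} log-Sobolev inequality cleanly — i.e. making sure the Holley--Stroock constant depends only on $\lambda$ (through the a priori sup-norm bound $C_0\lambda^0 = C_0$ on $V[\mu]$, which is in fact \emph{independent} of $\lambda$ by Prop.~\ref{prop:Reg_G} for $p'=0$!) and on $\Xx$, $\tau$, $c$ — and then verifying that the precise technical hypotheses of \cite{chizat2022mean} or \cite{nitanda2022convex} (regularity of $V$, the exact form of the convexity assumption, the admissibility of reflecting boundary conditions) are met in this compact-domain-with-reflection setting rather than the more commonly stated $\RR^d$-with-confining-potential setting. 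Since $\Xx$ is compact convex, the Lebesgue reference satisfies a LSI (Bakry--Émery plus bounded perturbation, or Poincaré on convex bodies) and the boundary reflection only improves the dissipation, so this should go through, but it is the step requiring the most care in matching hypotheses to the cited black boxes.
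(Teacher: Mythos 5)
Your proposal is essentially correct and follows the same master plan as the paper: parse $F_{\lambda,\tau}=G_\lambda+\tau H$ as a convex ``interaction'' functional plus entropy, invoke the abstract mean-field Langevin convergence theorem of \cite{chizat2022mean,nitanda2022convex}, and supply the uniform log-Sobolev hypothesis via Holley--Stroock perturbation of the Lebesgue LSI on the compact convex set $\Xx$, using the $\lambda$-independent sup-norm bound on $V[\mu]$ from Prop.~\ref{prop:Reg_G}. The one place you diverge from the paper's route is well-posedness of \eqref{eq:PDE}: you go through the McKean--Vlasov SDE with reflecting boundary (citing \cite{sznitman1991topics,javanmard2020analysis,tanaka1979stochastic,lions1984stochastic}), whereas the paper stays on the PDE/gradient-flow side, using geodesic semi-convexity of $F_{\lambda,\tau}$ (from \cite{carlier2022lipschitz} for $G_\lambda$ and \cite{santambrogio2015optimal} for $H$) together with the Ambrosio--Gigli--Savar\'e theory \cite{ambrosio2005gradient}. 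Both are standard and adequate; the gradient-flow route is arguably cleaner here since it does not require discussing the reflection term and plugs directly into the Wasserstein framework the rest of the proof already lives in, while your SDE route is closer to the algorithm. You are also right to flag two slips in the theorem as stated: the lower bound should read $\tau H(\mu_t|\mu^*_{\lambda,\tau})$ (that is what the entropy-sandwich lemma and $\tau$-strong convexity of $F_{\lambda,\tau}$ yield; $\frac{\lambda}{2}$ is not justified for general $\tau$), and since the Holley--Stroock oscillation bound on $V[\mu]$ is $\lambda$-independent, the contraction rate genuinely depends on $\tau$ rather than $\lambda$ --- which the paper itself acknowledges a few lines below by writing $\rho_\lambda\sim\tau e^{-L\,\mathrm{diam}(\Xx)/\tau}$, suggesting the subscript $\lambda$ and the later $\mu^*_\lambda$ are typos for $\tau$ and $\mu^*_{\lambda,\tau}$ respectively.
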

\end{assbox}
\begin{proof}
We have semi-convexity of $F_{\lambda,\tau}$ along Wasserstein geodesics, by \cite[Thm.~4.1]{carlier2024displacement} for the first component $G_\lambda$ and by a standard result~\cite{santambrogio2015optimal} for the  $H$ component. Thus the general well-posedness results from~\cite{ambrosio2005gradient} applies. For the exponential convergence -- in function value and in relative entropy -- we apply the result from~\cite[Thm. 3.2]{chizat2022mean}, see also~\cite{nitanda2022convex} (although stated on $\RR^d$, the argument goes through on a compact domain). 

The main assumptions to check are that (i) $\mu\mapsto G_\lambda(\mu)$ is convex (Prop.~\ref{prop:Reg_G}), (ii) that a global minimizer $\mu^*_{\lambda,\tau}$ exists (Thm.~\ref{thm:characterization}) and finally we need to check that the probability measure $\hat \mu_t \propto e^{-V[\mu_t]} \in \Pp(\Xx)$ satisfies a log-Sobolev inequality, uniformly in $t$ (Assumption 3 in~\cite{chizat2022mean}). 

Since $\Xx$ is bounded, the normalized Lebesgue measure satisfies a log-Sobolev inequality~\cite[Thm.~7.3]{ledoux1999concentration}. By the Holley-Stroock perturbation criterion~\cite{holley1986logarithmic} $\hat \mu_t$ satisfies it as well; this criterion applies here because $\sup_x V[\mu](x)- \inf_x V[\mu](x)$ is bounded, uniformly in $\mu\in \Pp(\Xx)$ by Prop.~\ref{prop:Reg_G}. 
\end{proof}
The contraction rate $\rho_\tau$ is of the form $\tau e^{-L\cdot \mathrm{diam}(\Xx)/\tau}$ where $L$ is the Lipschitz constant of $c$ (and $L\cdot \mathrm{diam}(\Xx)$ is a uniform bound on $\Vert V[\mu]\Vert_{\mathrm{osc}}$). It thus approaches $1$ exponentially fast as $\tau$ decreases. It could be of interest to exhibit settings where this dependency in $\tau$ is milder. We also note that both inner and outer regularizations are needed to obtain Thm.~\ref{thm:global-convergence}: in particular the inner-regularization is necessary to obtain well-posedness of the PDE~\eqref{eq:PDE}.

\section{Numerical experiments}\label{sec:numerics}
The (Julia) code to reproduce the experiments is available online\footnote{\url{https://github.com/lchizat/2023-doubly-entropic-barycenter.git}}.

\paragraph{Comparison of 1D-barycenters} On Fig.~\ref{fig:comparison-1D} we compare various barycenters for $K=3$ and $(\nu_k)_{k=1}^3$ probability densities on $\Xx=[0,1]$ with cost $c(x,y)=\nicefrac12 \Vert y-x\Vert^2_2$. The $(\lambda,\tau)$-barycenters have been computed numerically using gradient ascent on the dual problem~\eqref{eq:dual} after discretizing the problem on a regular grid of size $200$ and replacing $H$ by the negative discrete entropy. We observed that the algorithm converged linearly for small enough step-sizes. For reference, we plot in blue the unregularized Wasserstein barycenter $\mu^*_{0,0}$ that is computed by taking the $L^2$-barycenter of the quantile functions~\cite[Chap.~2]{santambrogio2015optimal}. We observe that the choice $\tau=\lambda/2$ indeed gives, visually, the best approximation of $\mu^*_{0,0}$ for fixed $\lambda$. For comparison, we also plot the Sinkhorn divergence barycenter $\mu^*_{\lambda,\mathrm{div}}$, which is computed with~\cite[Alg.~1]{janati2020debiased}. We observe that it also approaches weakly $\mu^*_{0,0}$ but its density displays strong oscillations, which may suggest that this object is in general less well-behaved than $(\lambda,\tau)$-barycenters.

\begin{figure}
\centering
\begin{subfigure}{0.33\linewidth}
\centering
\includegraphics[scale=0.4]{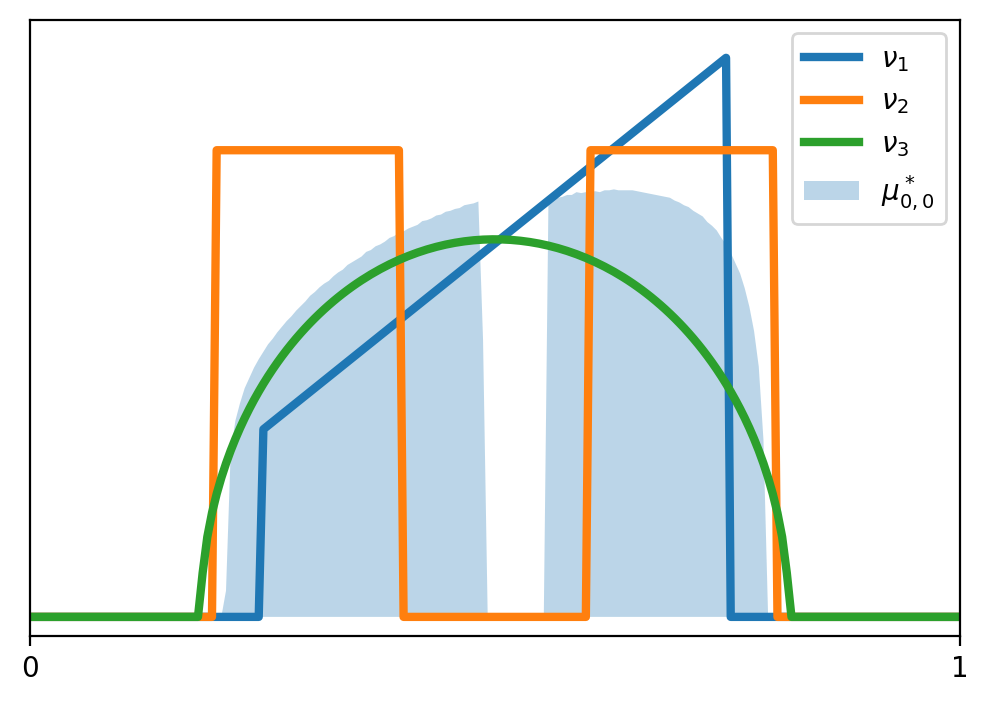}
\caption{Densities $(\nu_k)_{k=1}^3$ and $\mu^*_{0,0}$}\label{subfig:1D-marg}
\end{subfigure}%
\begin{subfigure}{0.33\linewidth}
\centering
\includegraphics[scale=0.4]{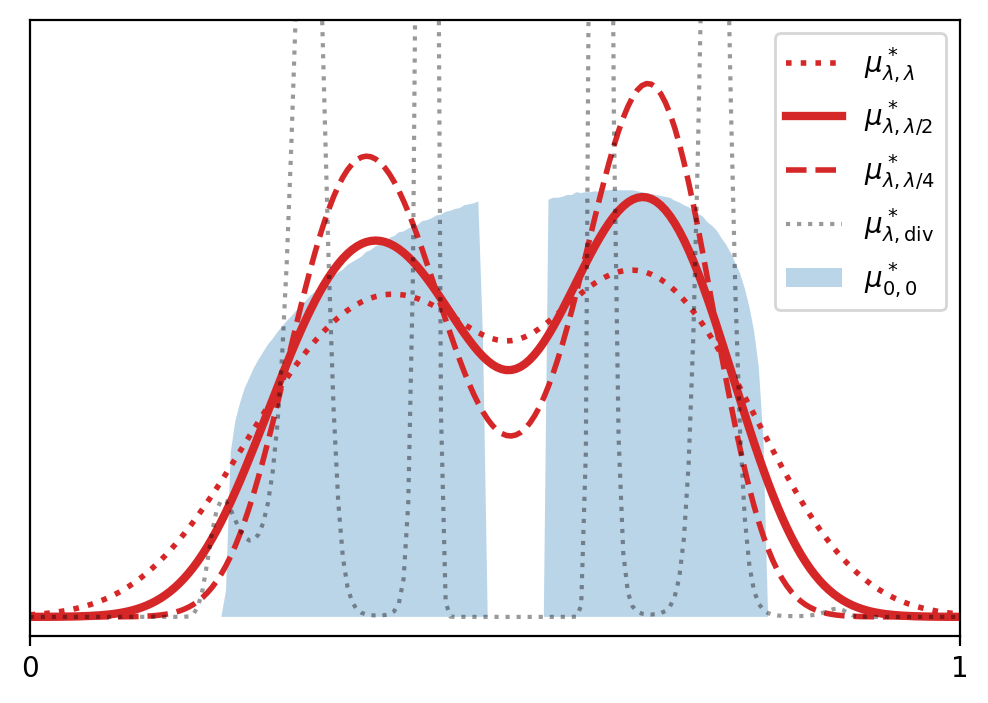}
\caption{$\lambda=1/128$}\label{subfig:1D-128}
\end{subfigure}%
\begin{subfigure}{0.33\linewidth}
\centering
\includegraphics[scale=0.4]{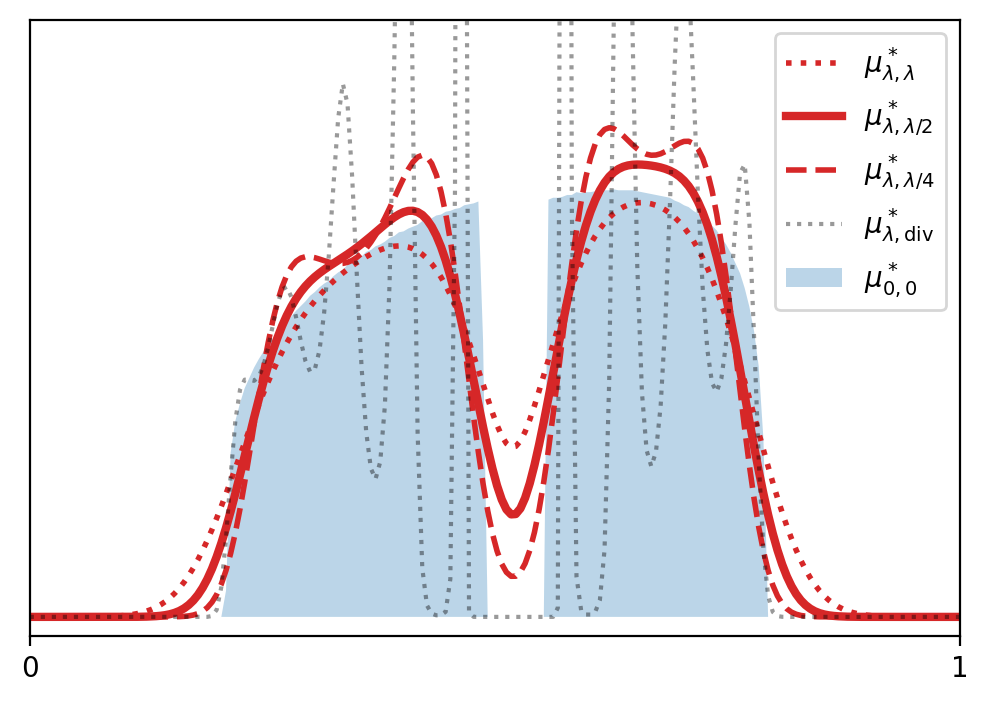}
\caption{$\lambda=1/512$}\label{subfig:1D-512}
\end{subfigure}
\caption{Iso-barycenters for $(\nu_k)_{k=1}^3$ probability densities on $\Xx=[0,1]$ (displayed on the left). For $\lambda,\tau>0$, $\mu^*_{\lambda,\tau}$ is computed with gradient ascent on the discretized dual problem.}
\label{fig:comparison-1D}
\end{figure}

\paragraph{Escaping stationary points with noisy particle gradient descent} To illustrate the global convergence of Noisy Particle Gradient Descent (NPGD) to $\mu^*_{\lambda,\tau}$ in the mean-field limit, we consider a configuration for which $G_0$ has a ``bad'' local minimum and initialize the dynamics at this measure as shown on Fig.~\ref{subfig:2D-1}. We consider on $\Xx=\RR^2$ the barycenter of\footnote{This example was suggested to us by Hugo Lavenant.} 
\begin{align*}
\nu_1 = \frac12 (\delta_{(-3,0)}+ \delta_{(-1,1)})
&&\text{and}&& \nu_2 = \frac12 (\delta_{(1,1)}+ \delta_{(3,1)})
\end{align*}
with cost $c(x,y)=\frac12 \Vert y-x\Vert^2_2$. It can be checked with direct computations that $\mu^*_{(0,0)} = \frac12 (\delta_{(-1,\nicefrac12)}+ \delta_{(1,\nicefrac12)})$ and that the measure $\mu^*_{\mathrm{init}} =\frac12 (\delta_{(0,0)}+ \delta_{(0,1)})$ is a stable local minimizer (in the sense that when parameterized by the positions of these two Dirac masses, $G_0$ is locally minimized by $((0,0),(0,1))$ and its Hessian at this point is positive definite, proportional to the identity).
In particular, fixed-point or gradient descent iterations for $G_0$ would not move away from this stationary point. On Fig.~\ref{subfig:2D-1} we observe, in accordance to Thm.~\ref{thm:global-convergence}, that NPGD can escape from the neighborhood of $\mu_{\mathrm{init}}$ and converges to a discrete approximation of $\mu^*_{(\lambda,\lambda/2)}$, itself an approximation of $\mu^*_{(0,0)}$. Note that this is only observed when $\tau$ is large enough, otherwise the dynamics might be trapped in ``metastable'' states (here $\lambda=0.1$ and $\tau=\lambda/2$). We have used $m=200$ particles, a step-size of $0.5$ and Fig.~\ref{subfig:2D-400} represents the state of NPGD after $400$ iterations.

\begin{figure}
\centering
\begin{subfigure}{1.0\linewidth}
\centering
\includegraphics[scale=0.8]{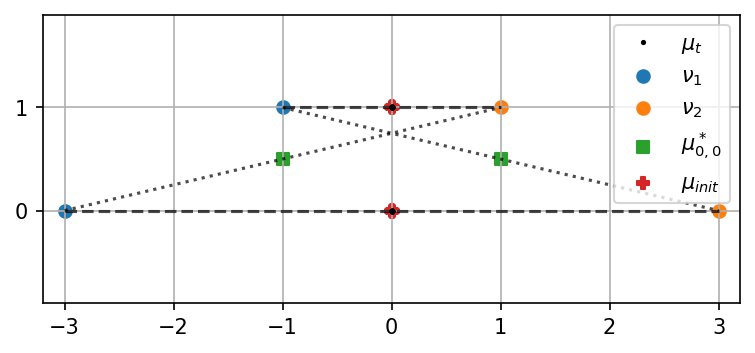}
\caption{Initializing on a local minimum of $G_0$ ($t=0$)}\label{subfig:2D-1}
\end{subfigure}\\
\begin{subfigure}{1.0\linewidth}
\centering
\includegraphics[scale=0.8]{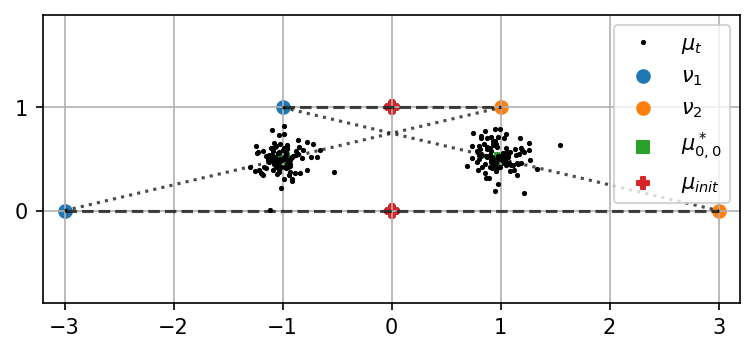}
\caption{Approximate global minimizer ($t$ large)}
\label{subfig:2D-400}
\end{subfigure}
\caption{Illustration of the global convergence of the noisy particle gradient descent algorithm. (left) We initialize $m=200$ particles on the measure $\mu_{\mathrm{init}}$ which is a local minimizer of $G_0$. (right) The algorithm converges to a distribution which weakly approximates $\mu^*_{\lambda,\lambda/2}$, which is itself an approximation of $\mu^*_{0,0}$.}
\label{fig:NPGD}
\end{figure}

\section{Conclusion}
We have proposed doubly-regularized EOT barycenters, a formulation of barycenters over $\Pp(\Xx)$ that has several benefits: (i) it gathers most previously studied OT-like barycenters in a single framework, (ii) it leads to several desirable properties such as regularity, stability and approximation of Wasserstein barycenters (for $\tau\sim \lambda/2$), and (iii) it is amenable to both convex optimization methods and grid-free methods with global convergence guarantees. 

In this paper, we have covered a diverse range of theoretical topics (regularity, approximation, statistics, optimization) as it is really all these aspects that together show the trade-offs at play in the choice of regularization. In a separate paper~\cite{vavskevivcius2023computational}, we have studied the computational complexity of $(\lambda,\tau)$-barycenters via Sinkhorn-like algorithms. In future works, it would be desirable to develop in depth analyses of other specialized properties of these objects.



\printbibliography
\appendix

\section{Closed-forms for isotropic Gaussian distributions}\label{app:gaussians}
In this section, we obtain closed form expressions for the $\mu^*_{\lambda,\tau}$ barycenter in the simple setting of centered isotropic Gaussian probability measures, where we can rely on closed-forms expressions for entropic optimal transport. We consider the following problem:
\begin{align}\label{eq:simplified}
\min_{\nu\in \Pp(\RR^d)} T_\lambda(\mu,\nu) + \tau H(\nu)
\end{align}
for $\mu=\Nn(0,a)$ where $a>0$ is the variance. This is enough to recover the general case of barycenters of isotropic Gaussian distributions with identical variances since the effect of having non-centered Gaussian distributions can be factored out as in~\cite{janati2020entropic}. Note that~\cite{ballu2020stochastic} also considered this problem but did not obtain a closed-form solution.  Let us look for a Gaussian solution of the form $\nu = \Nn(0,b)$. By~\cite{janati2020entropic}, we have that the optimal dual potentials are $\varphi(x)=\frac12 \tilde u \Vert x\Vert_2^2$ and $\psi(y)=\frac12 \tilde v \Vert y\Vert^2$ with
\begin{align*}
\tilde u &= 1 - \frac{2b}{\xi(b)+\lambda},&
\tilde v &= 1 - \frac{2a}{\xi(b)+\lambda},&
\xi(b) &=  \sqrt{4ab+\lambda^2}
\end{align*}
where the variables from~\cite{janati2020entropic} and our variables are related by $\sigma^2 \rightarrow \lambda$, $U \rightarrow-\lambda \tilde u I$, $V \rightarrow -\lambda \tilde v I$, $C \rightarrow I\cdot (\xi(b)-\lambda)/2$. Also the EOT cost and entropy is
\begin{align*}
\frac{2}{d}T_\lambda(\mu,\nu) &= a+b - \xi(b) +\lambda \log(\xi(b)+\lambda)+C,& \frac{2}{d}\tau H(\mu) &=-\tau \log(b)+C
\end{align*}
where $C$ denotes a constant independent of $a$ and $b$. When restricted to $\nu$ of the form $\Nn(0,b)$, the optimization problem thus becomes
$$
\min_{b>0} a+b - \xi(b) +\lambda \log(\xi(b)+\lambda)-\tau \log(b).
$$
The first order optimality condition gives the implicit equation
\begin{align}\label{eq:sol-gaussian}
1- \frac{2a}{\xi(b)+\lambda}-\frac{\tau}{b}=0.
\end{align}
Now, let us argue that if $b$ satisfies this equation, then $\nu$ is in fact the unique minimizer of~\eqref{eq:simplified}. Indeed, \eqref{eq:simplified} admits a unique solution characterized by the first order optimality condition (which can be justified, in this non-compact setting, as in~\cite{janati2020entropic}):
$$
\log\Big(\frac{\d\nu}{\d x}\Big) +\frac{\psi_{\mu,\nu}}{\tau} = C 
$$
But if $\nu=\Nn(0,b)$ with $b$ solution to~\eqref{eq:sol-gaussian} then it holds
\begin{align}
\tilde v &=\tau/b&\Rightarrow&&\frac{\psi_{\mu,\nu}(y)}{\tau} - \frac{\Vert y\Vert^2_2}{2b}=0.
\end{align}
\rev{In other words, if $b$ is solution to~\eqref{eq:sol-gaussian} then $\nu$ satisfies the necessary and sufficient optimality conditions of Thm.~\ref{thm:characterization}, which proves that $\nu$ is indeed the unique minimizer of $F^*_{\lambda,\tau}$.}
\paragraph{Explicit solution.} Using $b=(\xi(b)^2-\lambda^2)/(4a)$, let us express~\eqref{eq:sol-gaussian} in terms of $\xi=\xi(b)$:
\begin{align*}
b(\xi+\lambda)-2ab-\tau(\xi+\lambda)=0 \Leftrightarrow \frac{\xi+\lambda}{4a}(\xi^2-2a\xi-\lambda^2+2a\lambda-4a\tau)=0.
\end{align*}
Solving for $\xi$ and taking the largest solution (which must be the unique solution leading to $b\geq 0$) leads to
\begin{align*}
\xi = a+\sqrt{(a-\lambda)^2+4a\tau} &&\Rightarrow&& b = \frac{\big(a+\sqrt{(a-\lambda)^2+4a\tau}\big)^2-\lambda^2}{4a}.
\end{align*}
Let us now consider some particular cases:
\begin{itemize}
\item In the limit $\tau\to 0$, we have
$$
b= \frac{(a+\vert a-\lambda\vert)^2-\lambda^2}{4a} =
\begin{cases}
a-\lambda &\text{if $a>\lambda$}\\
0 &\text{if $a\leq \lambda$}
\end{cases}
$$
\item In the limit $\lambda\to0$, we have
$$
b = \frac{a}{4}\big( 1+\sqrt{1+4\tau/a}\big)^2 = a+2\tau+O(\tau^2)
$$
\item for $\tau=\lambda$, we have
$$
b= \frac{(a+\vert a+\lambda\vert)^2-\lambda^2}{4a} = a +\lambda
$$
\item for $\tau=\lambda/2$, we have
\begin{align*}
b &=  \frac{(a+\sqrt{ a^2 +\lambda^2})^2-\lambda^2}{4a} \\
&= \frac{a^2+a^2+\lambda^2+2a\sqrt{ a^2 +\lambda^2}-\lambda^2}{4a} \\
&=\frac{2a^2+2a^2\sqrt{ 1 +(\lambda/a)^2}}{4a}\\
&=\frac{a}{2}(1+\sqrt{1+(\lambda/a)^2}) =a+\frac{\lambda^2}{4a}+O(\lambda^4)
\end{align*}
\end{itemize}

\paragraph{Best choice of $\tau$} Let us now solve for $\tau$ such that $b=a$, which we know is asymptotically $\tau\sim \lambda/2$. It holds
\begin{align*}
b-a = 0 &&\Leftrightarrow && (a+\sqrt{(a-\lambda)^2+4a\tau})^2-\lambda^2 -4a^2 = 0.
\end{align*} 
Let us define the intermediate unknown $\chi = \sqrt{(a-\lambda)^2+4a\tau} \Leftrightarrow \tau = (\chi^2-(a-\lambda)^2)/(4a)$. It follows
\begin{align*}
(a+\chi)^2-\lambda^2-4a^2 =0 && \Leftrightarrow &&  \chi^2 +2a\chi -\lambda^2-3a^2=0.
\end{align*} 
Solving for $\chi$, with the constraint $\chi\geq 0$ gives
\begin{align*}
\chi=\sqrt{4a^2+\lambda^2}-a&& \Leftrightarrow && \tau = \frac{(\sqrt{4a^2+\lambda^2}-a)^2-(a-\lambda)^2}{4a}.
\end{align*} 
Developing the squares and rearranging we get
\begin{align*}
\tau &= \frac{4a^2+-2a\sqrt{4a^2+\lambda^2}+2a\lambda}{4a} \\
&= \frac{\lambda}{2} + a\Big(1-\Big(1+\frac{\lambda^2}{4a^2}\Big)^{1/2}\Big)\\
&=\frac{\lambda}{2} -\frac{\lambda^2}{8a} +O(\lambda^4/a^3).
\end{align*}

\section{Optimality conditions}
In this section, we reproduce the classical argument for the optimality conditions of Thm.~\ref{thm:characterization}, in a  setting more general than in Section~\ref{sec:regularity} since the exact structure of the functional does not play a role. Consider the following assumptions:
\begin{itemize}
\item $G:\Pp(\Xx)\to \RR$ is a convex and weakly continuous functional that admits $V[\mu]$ as a first-variation, in the sense that 
\begin{align}
\forall \mu,\tilde \mu \in \Pp(\Xx),\; \lim_{\epsilon\, \downarrow\, 0}\frac{1}{\epsilon} \Big(G((1-\epsilon)\mu+\epsilon \tilde \mu) - G(\mu)\Big) = \int_\Xx V[\mu](x)\d (\tilde \mu-\mu)(x) .
\end{align}
\item $\pi\in \Pp(\Xx)$ is a reference probability measure (such as the normalized Lebesgue measure on $\Xx$ in the main text).
\end{itemize}
The following is a direct adaptation of the arguments in~\cite[Prop.~8.7]{santambrogio2015optimal}, which we reproduce below for convenience.
\begin{proposition}\label{prop:general-min-entropy-pb}
For $\tau>0$, the functional $F=G+ \KL(\cdot|\pi)$ admits a unique minimizer $\mu^*$ in $\Pp(\Xx)$ which is absolutely continuous with respect to $\pi$ and satisfies
\begin{align}\label{eq:opti-cond}
\frac{\d \mu^*}{\d \pi}(x) = \exp(\chi -V[\mu^*](x)) &&\text{with}&& \chi = -\log \int \exp(-V[\mu^*](x))\d\pi(x).
\end{align}
Moreover, any $\mu\in \Pp(\Xx)$ that is absolutely continuous and satisfies Eq.~\eqref{eq:opti-cond} is equal to $\mu^*$.
\end{proposition}
Since the EOT barycenter functional $G_\lambda$ satisfies these assumptions (by Prop.~\ref{prop:Reg_G}), this directly implies the optimality condition of Thm.~\ref{thm:characterization} by taking $\pi$ the normalized Lebesgue measure.
\begin{proof}
The functional $G$ is weakly continuous and $\KL(\cdot|\pi)$ is weakly lower-semicontinuous~\cite[Sec.~7.1.2]{santambrogio2015optimal} so $F$ is weakly lower-semicontinuous. It is not identically $+\infty$ since $F(\pi)<+\infty$. Since $\Pp(\Xx)$ is weakly compact, there exists at least one minimizer $\mu^*$ by the direct method of the calculus of variations. Moreover, since $\KL(\cdot| \pi)$ is strictly convex, $F$ is strictly convex and the minimizer is unique and since $\KL(\mu^*|\pi)<\infty$ we have $\mu^*$ absolutely continuous with respect to $\pi$ and let $\rho^*$ denotes its density.

We now prove the optimality condition, by considering a perturbation $\mu_\epsilon = (1-\epsilon)\mu^*+\epsilon \tilde \mu$ of density $\rho_\epsilon = (1-\epsilon)\rho^*+\epsilon \tilde \rho$ where $\tilde \mu \in \Pp(\Xx)$ is of the form $\tilde \mu = \tilde \rho \pi$ with $\tilde \rho \in L^\infty(\pi)$, that is there exists $M>0$ such that $\vert \tilde \rho (x)\vert \leq M$ for $\pi$ almost every $x$. On the one hand, we have by definition of the first-variation that
$$
\frac{\d}{\d \epsilon} G(\mu_\epsilon)\vert_{\epsilon=0} = \int V[\mu]\d (\tilde \mu -\mu^*).
$$
On the other hand for the relative entropy term $\KL(\cdot|\pi)$, the integrand can be differentiated in $\epsilon$ pointwisely thus giving $\log \rho_\epsilon(x)(\tilde \rho-\rho^*)$. For $\epsilon<1/2$, one can check that these functions are dominated by $(\rho^*+M)(\vert \rho^*\vert +\log M)$ which is in $L^1(\pi)$, owing to the fact that $\rho^*, \log \rho^*,\rho^*\log\rho^* \in L^1(\pi)$ by Lem.~\ref{lem:positive-minimizer}. This allows to differentiate under the integral sign and proves that
$$
\frac{\d}{\d \epsilon} \KL(\mu_\epsilon| \pi)\vert_{\epsilon=0} = \int \log \rho^* \d (\tilde \mu -\mu^*).
$$
Now by optimality of $\mu^*$ we must have, denoting $g^*=V[\mu^*]+\log \rho^*$
$$
0 \leq \frac{\d}{\d \epsilon} F(\mu_\epsilon) = \int g^* \d (\tilde \mu -\mu^*) \quad\Rightarrow\quad \int g^*\d\tilde \mu \leq \int g^*\d\mu^*.
$$
This implies that $\int g^*\d\mu^*$ equals the $\pi$-essential supremum of $g^*$ since otherwise, one could build a perturbation $\tilde \mu$ concentrating on a superlevel of $g^*$ which would contradict this inequality. But since $\rho^*>0$ (Lem.~\ref{lem:positive-minimizer}) we conclude that $g^*=\chi$ $\pi$-a.e. for some $\chi \in \RR$. The value of $\chi$ is determined by the constraint $\mu^* \in \Pp(\Xx)$.
Finally, any $\mu\in \Pp(\Xx)$ that is absolutely continuous and satisfy~\eqref{eq:opti-cond} is equal to $\mu^*$, because it satisfies the first order optimality conditions of a strictly convex problem.
\end{proof}

\begin{lemma}\label{lem:positive-minimizer}
Any minimizer $\mu^*$ of $F$ is absolutely continuous with respect to $\pi$ and its density $\rho^*=\frac{\d \mu^*}{\d \pi}$ must satisfy $\rho^*>0$ and $\log \rho^*\in L^1(\pi)$.
\end{lemma}
\begin{proof}
For $\epsilon\in [0,1]$, let us define $\rho_\epsilon = (1-\epsilon)\rho^*+\epsilon$, $\mu_\epsilon=\rho_\epsilon \pi$ and compare $F(\mu^*)$ to $F(\mu_\epsilon)$. By optimality of $\mu^*$ and convexity of $G$, we may write
$$
\KL(\mu^*|\pi)- \KL(\mu_\epsilon|\pi) \leq G(\mu_\epsilon) - G(\mu^*)\leq (1-\epsilon) G(\mu^*) +\epsilon G(\pi) - G(\mu^*) = \epsilon (G(\pi)-G(\mu^*)).
$$
We get 
$$
\int ( f(\rho^*)-f(\rho_\epsilon))\d\pi \leq C\epsilon
$$
where $f(t)=t\log(t)-t+1$ (and $f(0)=1$ by convention). Write 
\begin{align*}
A = \{x\in \Xx\;;\; \rho^*(x)>0\}, && B = \{x\in \Xx\;;\; \rho^*(x)=0\}.
\end{align*}
Since $f$ is convex we have for $x\in A$
$$
f(\rho^*(x))-f(\rho_\epsilon(x)) \geq ((\rho^*(x)-\rho_\epsilon(x))f'(\rho_\epsilon(x)) = \epsilon (\rho^*(x)-1)\log \rho_\epsilon(x).
$$
For $x\in B$, we simply write $f(\rho^*(x))-f(\rho_\epsilon(x)) =f(0)-f(\epsilon)=-\epsilon \log \epsilon -\epsilon$. This allows to write
$$
-\epsilon (\log \epsilon +1)\pi(B) +\epsilon  \int_{A} (\rho^*(x)-1)\log \rho_\epsilon(x) \d\pi(x) \leq C\epsilon
$$
and, dividing by $\epsilon$,
\begin{align}\label{eq:proof-positive}
-(\log \epsilon +1)\pi(B) +  \int_{A} (\rho^*(x)-1)\log \rho_\epsilon(x) \d\pi(x) \leq C.
\end{align}
Note that we always have $(\rho^*(x)-1)\log \rho_\epsilon(x)\geq 0$ (just distinguish between the case $\rho^*(x)\geq 1$ and $\rho^*(x)\leq 1$). Thus, we may write
$$
-(\log \epsilon +1)\pi(B) \leq C.
$$
Letting $\epsilon\to 0$ proves $\pi(B)=0$, hence $\rho^*(x)>0$, $\pi$-almost everywhere.

We now come back to Eq.~\eqref{eq:proof-positive} which is an upper-bound on the nonnegative functions $(\rho^*(x)-1)\log \rho_\epsilon(x)$. By Fatou's lemma we have as $\epsilon\to 0$
$$
\int_{\Xx} (\rho^*(x)-1)\log \rho^*(x) \d\pi(x) \leq C.
$$
Since this is the integral of a nonnegative function, it follows that $(\rho^*-1)\log \rho^*$ is in $L^1(\pi)$. Since we already know that $\rho^*\log \rho^*\in L^1(\pi)$, it follows $\log \rho^* \in L^1(\pi)$.
\end{proof}

\begin{lemma}[Strong convexity of $H$]\label{lem:H-strongly-convex}
The functional $H:\Pp(\Xx)\to \RR\sup \{+\infty\}$ is $1$-strongly convex relative to the total variation norm.
\end{lemma}
\begin{proof}
Let $\mu,\nu\in \Pp(\Xx)$ such that $H(\mu),H(\nu)<+\infty$ (note that since $\Xx$ is compact, $H$ is lower-bounded as then this function is equal, up to a constant, to the Kullback-Leibler divergence with respect to the normalized Lebesgue measure, itself lower-bounded by $0$). Let $\lambda\in {]0,1[}$ and let $\sigma=\lambda \mu+(1-\lambda)\nu$ which also satisfies $H(\sigma)<+\infty$ by convexity of $H$ and $\KL(\mu|\sigma)\leq \log(1/\lambda)<+\infty$. Moreover, $\int \log\Big(\frac{\d\sigma}{\d x}\Big)\d\mu$ has a well defined value in $\RR \cup {+\infty}$ since the negative part of the integrand can be upper bounded by $\max\{-\log(\lambda \mu),0\}$ which is integrable under $\mu$. Therefore under these assumptions, we can write
$$
\KL(\mu|\sigma) = H(\mu)-H(\sigma) -\int_\Xx \log\Big(\frac{\d\sigma}{\d x}\Big)\d(\mu-\sigma)
$$
and all terms are finite (an analogous formula holds for $\KL(\nu|\sigma)$). By Pinsker's inequality, it holds
\begin{align*}
\lambda \KL(\mu|\sigma) + (1-\lambda) \KL(\nu|\sigma) &\geq \frac{\lambda}{2} \Vert \mu-\sigma\Vert^2_\TV+ \frac{1-\lambda}{2} \Vert \nu-\sigma\Vert^2_\TV = \frac{\lambda(1-\lambda)}{2}\Vert \mu-\nu\Vert^2_\TV.
\end{align*}
It follows
\begin{align*}
\frac{\lambda(1-\lambda)}{2}\Vert \mu-\nu\Vert^2_\TV&\leq \lambda \Big(H(\mu)-H(\sigma) -\int_\Xx \log\Big(\frac{\d\sigma}{\d x}\Big)\d(\mu-\sigma)\Big)\\
&\quad + (1-\lambda) \Big(H(\nu)-H(\sigma) -\int_\Xx \log\Big(\frac{\d\sigma}{\d x}\Big)\d(\nu-\sigma)\Big)\\
&= \lambda H(\mu) +(1-\lambda) H(\nu) - H(\sigma)
\end{align*}
which is the strong convexity inequality we aimed to prove.
\end{proof}
\end{document}